\DeclareMathOperator*{\rt}{{\rm RT}}
\begin{document}

\def\COMMENT#1{}

\newcommand{\case}[1]{\medskip\noindent{\bf Case #1} }
\newcommand{\step}[1]{\medskip\noindent{\bf Step #1} }
\newtheorem{problem}{Problem}
\newtheorem{theorem}{Theorem}
\newtheorem{lemma}[theorem]{Lemma}
\newtheorem{proposition}[theorem]{Proposition}
\newtheorem{corollary}[theorem]{Corollary}
\newtheorem{conjecture}[theorem]{Conjecture}
\newtheorem{claim}[theorem]{Claim}
\newtheorem{definition}[theorem]{Definition}
\newtheorem*{definition*}{Definition}
\newtheorem{fact}[theorem]{Fact}
\newtheorem{observation}[theorem]{Observation}
\newtheorem{question}[theorem]{Question}
\newtheorem{remark}[theorem]{Remark}

\numberwithin{equation}{section}
\numberwithin{theorem}{section}

\def\eps{{\varepsilon}}
\renewcommand{\epsilon}{\varepsilon}
\newcommand{\cP}{\mathcal{P}}
\newcommand{\cT}{\mathcal{T}}
\newcommand{\cL}{\mathcal{L}}
\newcommand{\ex}{\mathbb{E}}
\newcommand{\eul}{e}
\newcommand{\pr}{\mathbb{P}}
\newcommand{\mincr}{\delta^{\rm cr}}
\newcommand{\phiind}{\phi^{\rm ind}}

\title[Two conjectures in Ramsey-Tur\'an theory]{Two conjectures in Ramsey-Tur\'an theory}
\author{Jaehoon Kim, Younjin Kim and Hong Liu}
\thanks{J.K.\ was supported by ERC grant~306349; Y.K.\ was supported by Basic Science Research Program through the National Research Foundation of Korea(NRF) funded by the Ministry of Education (2017R1A6A3A04005963); and H.L.\ was supported by the Leverhulme Trust Early Career Fellowship~ECF-2016-523.}

\begin{abstract}
	Given graphs $H_1,\ldots, H_k$, a graph $G$ is $(H_1,\ldots, H_k)$-free if there is a $k$-edge-colouring $\phi:E(G)\rightarrow [k]$ with no 
monochromatic copy of	$H_i$ with edges of colour $i$ for each $i\in[k]$. Fix a function $f(n)$, the Ramsey-Tur\'an function $\rt(n,H_1,\ldots,H_k,f(n))$ is the maximum number of edges in an $n$-vertex $(H_1,\ldots,H_k)$-free graph with independence number at most $f(n)$. We determine $\rt(n,K_3,K_s,\delta n)$ for $s\in\{3,4,5\}$ and sufficiently small $\delta$, confirming a conjecture of Erd\H os and S\'os from 1979. It is known that $\rt(n,K_8,f(n))$ has a phase transition at $f(n)=\Theta(\sqrt{n\log n})$. However, the values of $\rt(n,K_8, o(\sqrt{n\log n}))$ was not known. We determined this value
by proving $\rt(n,K_8,o(\sqrt{n\log n}))=\frac{n^2}{4}+o(n^2)$, answering a question of Balogh, Hu and Simonovits. The proofs utilise, among others, dependent random choice and results from graph packings.
\end{abstract}

\date{\today}
\maketitle

\section{Introduction and results}
Tur\'an's theorem~\cite{Turan} states that among all $n$-vertex $K_{s+1}$-free graphs, the balanced complete $s$-partite graph, now so-called $s$-partite \emph{Tur\'an graph} $T_s(n)$, has the largest size, where the size of a graph is the number of edges in a graph. Notice that these Tur\'an graphs have rigid structures, in particular, there are independent sets of size linear in $n$. It is then natural to ask for the size of an $n$-vertex $K_{s+1}$-free graph without these rigid structures, i.e. graphs with additional contstraints on their independence number. Such problems, first introduced by S\'os~\cite{ESos-1} in 1969, are the substance of the Ramsey-Tur\'an theory. Formally, given a graph $H$ and natural numbers $m,n\in \mathbb{N}$, the \emph{Ramsey-Tur\'{a}n number}, denoted by $\rt(n,H,m)$ is the maximum number of edges in an $n$-vertex $H$-free graph $G$ with $\alpha(G)\le m$ can have. Motivated by above reasons, the most classical case is when $m$ is sublinear in $n$, i.e.~$m=o(n)$.
\begin{definition*}
	Given a graph $H$ and $\delta\in (0,1)$, let 
	\begin{equation}\label{eq-defn}
	\varrho(H,\delta):=\lim_{n\rightarrow\infty}\frac{\rt(n,H,\delta n)}{n^2}\quad \mbox{ and } \quad \varrho(H):=\lim_{\delta \rightarrow 0}\varrho(H,\delta).
	\end{equation}
   We write
	$$\rt(n,H,o(n))=\varrho(H)\cdot n^2+o(n^2).$$
\end{definition*}

We call $\varrho(H)$ the \emph{Ramsey-Tur\'an density} of $H$.
The Ramsey-Tur\'an density of cliques are well-understood. For odd cliques, Erd\H os and S\'os~\cite{ESos-1} proved that $\varrho(K_{2s+1})=\frac{1}{2}(\frac{s-1}{s})$ for all $s\ge 1$. The problem for even cliques is much harder. Szemer\'edi~\cite{Sz-K4} first showed that $\varrho(K_4)\le \frac{1}{8}$.
However no lower bound on $\varrho(K_4)$ was known until Bollob\'as and Erd\H os~\cite{Bollobas-E} provided a matching lower bound using an ingenious geometric construction, showing that $\varrho(K_4)=\frac{1}{8}$. Finally, Erd\H os, Hajnal, S\'os and Szemer\'edi~\cite{EHSosSz} determined the Ramsey-Tur\'an density for all even cliques, proving that $\varrho(K_{2s})=\frac{1}{2}(\frac{3s-5}{3s-2})$ for all $s\ge 2$. 

While $\varrho(H)$ shows only the limit value, $\varrho(H,\delta)$ captures the transition behaviours of Ramsey-Tur\'an number more accurately when independence number drops to $o(n)$. 
Capturing this more subtle behaviour, Fox, Loh and Zhao~\cite{FLZ} proved that $\varrho(K_4, \delta)=\frac{1}{8} + \Theta(\delta)$. Building on Fox-Loh-Zhao's work, L\"uders and Reiher~\cite{LR} have very 
recently showed that, surprisingly, there is a precise formula for $\varrho(H,\delta)$ for all cliques and sufficiently small $\delta$: for all $s\ge 2$, $\varrho(K_{2s-1},\delta)=\frac{1}{2}(\frac{s-2}{s-1}+\delta)$, while $\varrho(K_{2s},\delta)=\frac{1}{2}(\frac{3s-5}{3s-2}+\delta-\delta^2)$. Inspired by L\"uders and Reiher's work, one of our results concerns the multicolour extension of this result. For more literature on Ramsey-Tur\'an theory, we refer the readers to a survey of Simonovits and S\'os~\cite{Sim-Sos-Survey}. See also~\cite{BLSh, BMMM,BMS} for more recent results on variants of Ramsey-Tur\'an problem.

\subsection{Multicolour Ramsey-Tur\'an problem}
Given graphs $H_1,\ldots,H_k$, we say that a graph $G$ is \emph{$(H_1,\ldots, H_k)$-free} if there exists an edge colouring $\phi:E(G)\rightarrow [k]$ such that for each $i\in [k]$, the spanning subgraph with all edges of colour $i$ is $H_i$-free. Let $\rt(n,H_1,\ldots,H_k,m)$ be the maximum size of an $n$-vertex $(H_1,\ldots,H_k)$-free graph with independence number at most $m$, and define $\varrho(H_1,\ldots, H_k,\delta)$ and $\varrho(H_1,\ldots,H_k)$ analogous to~\eqref{eq-defn}. Erd\H os, Hajnal, Simonovits, S\'os and Szemer\'edi~\cite{EHSSSz} proved that the multicolour Ramsey-Tur\'an density for cliques is determined by certain weighted Ramsey numbers (see Definition~5 and Theorem~2 in~\cite{EHSSSz} for more details). Determining the actual values of $\varrho(K_{s_1},\ldots,K_{s_k})$ turns out to be very difficult. Only sporadic cases are known~\cite{EHSSSz}: $\varrho(K_3,K_3)=\frac{1}{4}$, $\varrho(K_3,K_4)=\frac{1}{3}$, $\varrho(K_3,K_5)=\frac{2}{5}$ and $\varrho(K_4,K_4)=\frac{11}{28}$. Even 
  2-coloured triangle versus a clique case, i.e.~determining $\varrho(K_3,K_s)$, remains open. Recall that the Ramsey number $R(s,t)$ is the minimum integer $N$ such that every blue/red colouring of $K_N$ contains either a blue $K_s$ or a red $K_t$. Erd\H os, Hajnal, Simonovits, S\'os and Szemer\'edi~\cite{EHSSSz} conjectured for all $s\ge 2$, $\varrho(K_3,K_{2s-1})=\frac{1}{2}\left(1-\frac{1}{R(3,s)-1}\right)$.

Capturing more subtle behaviours of multicolour Ramsey-Tur\'an number, Erd\H os and S\'os~\cite{ES} proved in 1979 that $\varrho(K_3,K_3,\delta)=\frac{1}{4}+\Theta(\delta)$ and conjectured that for sufficiently small $\delta$, there exists $c>0$ such that $\varrho(K_3,K_3,\delta)=\frac{1}{4}+c\delta$. In the following theorem, we determine the exact value of $\varrho(K_3, K_3,\delta)$ for all small $\delta>0$, thus confirming the conjecture of Erd\H{o}s and S\'os. Furthermore, we also determine the exact values of $\varrho(K_3,K_4,\delta)$ and $\varrho(K_3,K_5,\delta)$. We remark that $\varrho(K_3,K_4,\delta)$ behaves quite differently from  $\varrho(K_3,K_s,\delta)$ with $s\in\{3,5\}$. The extremal graph achieving the value of $\varrho(K_3,K_3,\delta)$ (resp. $\varrho(K_3,K_5,\delta)$) comes from taking the union of $T_2(n)$ (resp. $T_5(n)$) and $F^*$, certain almost $\delta n$-regular $K_3$-free graph with independence number at most $\delta n$. It turns out that the natural lower bound from the union of $T_3(n)$ and $F^*$ is not optimal for $\varrho(K_3,K_4,\delta)$.

\begin{theorem}\label{thm-K3Ks}
	For sufficiently small $\delta>0$, we have
	\begin{itemize}
	\item $\varrho(K_3,K_3,\delta )=\frac{1}{4}+\frac{\delta}{2}$;
	
	\item $\varrho(K_3,K_4,\delta )=\frac{1}{3}+\frac{\delta }{2}+\frac{3\delta^2}{2}$;
	
	\item $\varrho(K_3,K_5,\delta )=\frac{2}{5}+\frac{\delta }{2}$;
    \end{itemize}
\end{theorem}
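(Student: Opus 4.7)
The plan is to establish matching lower and upper bounds for each $s \in \{3,4,5\}$, with the upper bounds (especially $s = 4$) being the substantial task.

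For the lower bounds with $s \in \{3, 5\}$, I use a blow-up-plus-inside construction. Let $F^*$ be a $K_3$-free graph on $n$ vertices with independence number at most $\delta n$ that is almost $\delta n$-regular; such graphs exist, and the elementary observation that neighbourhoods in triangle-free graphs are independent yields the tight bound $e(F^*) \le \alpha(F^*) \cdot |V(F^*)|/2$. For $s = 3$, take $T_2(n)$ in colour $1$ (bipartite, hence $K_3$-free) and scaled copies of $F^*$ inside each of the two parts in colour $2$. For $s = 5$, colour $T_5(n)$ as the blow-up of the (unique up to isomorphism) Ramsey colouring of $K_5$ into two edge-disjoint $C_5$'s, so each colour class is a blow-up of $C_5$ and hence triangle-free; insert $F^*$ inside each part in colour $2$, and a short case analysis using the triangle-free girth of $C_5$ shows that no colour-$2$ $K_5$ can form. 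In both cases every independent set must lie in a single part and therefore has size at most $\alpha(F^*) \le \delta n$, giving $\bigl(\varrho(K_3, K_s) + \tfrac{\delta}{2}\bigr) n^2$ edges.

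For $s = 4$, the naive $T_3(n)$-with-$F^*$-inside construction only gives $\bigl(\tfrac{1}{3} + \tfrac{\delta}{2}\bigr) n^2$ edges: any colour-$2$ edge placed inside a part $V_i$, combined with the complete colour-$2$ bipartite connections from $V_i$ to the other two parts, forces a colour-$2$ $K_4$. To obtain the extra $\tfrac{3\delta^2}{2} n^2$ term I would modify the construction by recolouring a sparse cross-part colour-$2$ structure (of size $\Theta(\delta n)$ per pair of parts) into colour $1$ while simultaneously installing a carefully synchronised colour-$2$ structure inside the parts; the bilinear-in-$\delta$ size of this trade matches the quadratic correction.

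For the upper bounds, let $G$ be an $n$-vertex $(K_3, K_s)$-free graph with a fixed colouring $\phi$ and $\alpha(G) \le \delta n$. I would apply a multicoloured Szemer\'edi regularity lemma to $(G, \phi)$, producing a reduced graph $R$ whose colour classes are essentially $K_3$- and $K_s$-free respectively. By the stability version of the Erd\H{o}s, Hajnal, Simonovits, S\'os and Szemer\'edi theorem for $\varrho(K_3, K_s)$, $R$ is close to the extremal blow-up structure described in the construction above, yielding a vertex partition $V(G) = V_1 \cup \dots \cup V_t$ (modulo an $o(n)$ exceptional set) in which the cross-edges contribute the main term $\varrho(K_3, K_s) \cdot n^2 + o(n^2)$. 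For the inside-part edges, a colour-$2$ triangle inside $V_i$ would combine with a $K_{s-3}$ in the other parts (found via dependent random choice applied to the colour-$2$ neighbourhoods to guarantee many common colour-$2$ neighbours of the triangle vertices) to form a forbidden colour-$2$ $K_s$; since colour-$1$ triangles are globally forbidden, $G[V_i]$ is essentially triangle-free in each colour. Combined with $\alpha(G[V_i]) \le \alpha(G) \le \delta n$ and the bound $e(F) \le \alpha(F)|V(F)|/2$ for triangle-free $F$, this gives $\sum_i e(G[V_i]) \le \tfrac{\delta}{2} n^2 + o(n^2)$, matching the first-order term of the lower bound.

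The main obstacle is the $s = 4$ case, where both colour classes inside $V_i$ must be triangle-free, so $G[V_i]$ is itself $(K_3, K_3)$-free with $\alpha \le \delta n$; a naive application of the $(K_3, K_3)$-bound on each part would overshoot by a linear-order amount. The key point is that dense colour-$2$ structure inside a part interacts with cross-part colour-$2$ edges to create colour-$2$ $K_4$'s, forcing a trade-off between inside-part and cross-part densities. Quantifying this trade-off via a packing-type optimisation (as suggested by the abstract) then closes the gap and produces the $\tfrac{3\delta^2}{2} n^2$ correction, paralleling the refined lower-bound construction.
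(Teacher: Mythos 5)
Your lower bounds for $s\in\{3,5\}$ match the paper's constructions, but for $s=4$ you have not actually produced a construction: ``recolouring a sparse cross-part colour-$2$ structure \dots while installing a carefully synchronised colour-$2$ structure'' is a plan, not a graph, and the $\frac{3\delta^2}{2}$ term is exactly where the work lies. The paper's construction is specific: take a triangle-free $F_2$ on $\frac{n}{3}-\delta n$ vertices, pick an independent set $B$ of size $d_2\approx\delta n$, add a clone set $A$ of $B$ together with the complete bipartite graph $[A,B]$, place this graph in $X_1$, and recolour $[A,X_2]$, $[B,X_3]$ and $[X_2,X_3]$ to colour $1$; one must then verify $(K_3,K_4)$-freeness and that the gain is exactly $d_2^2+d_2^2=2\delta^2n^2$ inside $X_1$ (net $\frac{3\delta^2}{2}n^2$ after accounting for the smaller $F_2$). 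Without this, half of the second bullet of Theorem~\ref{thm-K3Ks} is unproved.

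On the upper bounds there are two genuine gaps. First, ``triangle-free in each colour'' does not imply the part is triangle-free: mixed-colour triangles must be excluded, and this needs the coloured stability information about the cross-part colour pattern (as in \ref{P1}/\ref{A6} of Lemma~\ref{thm-K3K4-stab}: e.g.\ every vertex of $X_2$ is almost completely joined in colour $1$ to $X_3$ and in colour $2$ to $X_1$, which first forces all edges inside $X_2$ to be colour $2$ and then kills triangles). Moreover, for $s=4$ the part $X_1$ is \emph{not} triangle-free (the extremal $F$ contains triangles), so your inside-part bound $\frac{\delta}{2}n^2+o(n^2)$ is false there; the paper instead defines $A,B\subseteq X_1$ of vertices with large colour-$1$ degree to $X_2$, resp.\ $X_3$, proves $A,B$ are disjoint independent sets and that $G[X_1\setminus A]$, $G[X_1\setminus B]$ are triangle-free (Claim~\ref{cl-K3K4-triangle-free}), which yields exactly $e(G[X_1])\le \frac{\delta n|X_1|}{2}+\frac{3\delta^2n^2}{2}$; ``a packing-type optimisation'' is not a substitute for this argument (the packing input in the paper is used for $K_8$, not here). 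Second, error management: since the target is exact with corrections of order $\delta$ and $\delta^2$, while any regularity/stability step forces $\delta\ll\gamma$ and produces errors like $\gamma^{c}n^2\gg\delta^2 n^2$, your ``$o(n^2)$'' slack in both the cross-part and inside-part counts would swamp the terms you are trying to pin down. The paper avoids this by first passing to a subgraph of linear minimum degree (Lemma~\ref{lem: min degree}, with the bookkeeping $\delta'=\delta n/n'$), using stability only for structure, and then doing a final count that involves no regularity parameters at all — only exact triangle-freeness, max-cut/min-degree facts, and $\alpha(G')\le\delta'n'$. Your proposal does not explain how to remove these errors, and as written it would only give $\varrho(K_3,K_s,\delta)\le \varrho(K_3,K_s)+\frac{\delta}{2}+\gamma^{c}$ with $\gamma\gg\delta$, which is strictly weaker than the theorem.
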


%
%
%

We can see that the 2-colour Ramsey-Tur\'an number $\varrho(K_3,K_s,\delta)$ shares some similarity with the single-colour problem $\varrho(K_s,\delta)$ as they both have an extra quadratic term when $s$ is even. However, the single-colour Ramsey-Tur\'an number has the same quadratic term for all even $s$. This is not the case for the 2-colour Ramsey-Tur\'an number due to its relation to Ramsey number $R(3,\lceil s/2\rceil)$. Indeed, we give a construction showing that $$\varrho(K_3,K_6,\delta )\ge\frac{5}{12}+\frac{\delta }{2}+2\delta^2.$$ 
We conjecture that the equality above holds (see concluding remark for more details).

In the following theorem,
we determine Ramsey-Tur\'an numbers for $(K_3,K_s)$ for all $s\ge 3$ when the independence number condition is slightly more strict than sublinear, providing evidence towards the Erd\H os-Hajnal-Simonovits-S\'os-Szemer\'edi conjecture. Let $\omega(n)$ be a function growing to infinity arbitrarily slowly as $n\rightarrow \infty$. For each integer $s\ge 2$, define
\begin{equation}\label{eq-defn-sublinear}
	g^{\omega}_s(n):=\frac{n}{e^{\omega(n)\cdot (\log n)^{1-1/s}}}.
\end{equation}
We omit $\omega$ and write $g_{s}(n)$ whenever the result holds for any function $\omega(n)$ growing to infinity. Note that $n \gg g_s(n) \gg n^{1-\eps}$, for any $\eps>0$.
\begin{theorem}\label{thm-3q-sm-ind}
	For all $s\ge 2$, we have 
	\begin{itemize}
		\item $\rt(n,K_3,K_{2s-1},g_s(n))=\frac{1}{2} \left(1-\frac{1}{R(3,s)-1}\right)n^2+o(n^2)$; and
	
	\item$\rt(n,K_3,K_{2s},g_s(n))=\frac{1}{2} \left(1-\frac{1}{R(3,s)}\right)n^2+o(n^2)$.
\end{itemize}

\end{theorem}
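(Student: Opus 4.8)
The plan is to prove the two lower bounds by explicit constructions and the two upper bounds by a regularity/stability argument building on the Erd\H os--Hajnal--Simonovits--S\'os--Szemer\'edi framework; I describe the odd case $K_{2s-1}$ throughout, the even case being the same scheme with one extra class in the construction and one extra vertex in the embedding. For the lower bounds put $R=R(3,s)$. For $\rt(n,K_3,K_{2s-1},g_s(n))$: fix a red/blue colouring of $K_{R-1}$ with no red $K_3$ and no blue $K_s$, split $[n]$ into $R-1$ classes $V_1,\dots,V_{R-1}$ of almost equal size, put a complete bipartite graph between $V_i$ and $V_j$ in the colour of $ij$, and place inside each $V_i$ a triangle-free graph $F_i$ with $\alpha(F_i)\le g_s(n)$, all its edges blue; such $F_i$ exist since there are triangle-free graphs on $m$ vertices with independence number $O(\sqrt{m\log m})=o(g_s(m))$. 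Then the red graph is a blow-up of a triangle-free graph with independent classes, hence triangle-free; a blue clique meets at most $s-1$ classes (which form a blue clique of $K_{R-1}$) and at most two vertices of each, so has fewer than $2s-1$ vertices; and the whole graph has independence number $\max_i\alpha(F_i)\le g_s(n)$ since the classes are completely joined; the size is $e(T_{R-1}(n))+o(n^2)=\tfrac12(1-\tfrac1{R-1})n^2+o(n^2)$. For $\rt(n,K_3,K_{2s},g_s(n))$ use $R$ classes with all cross-edges: give $V_1$ an all-red triangle-free internal graph and $V_2,\dots,V_R$ all-blue triangle-free internal graphs, each with independence number $\le g_s(n)$; colour the cross-pairs inside $\{V_2,\dots,V_R\}$ according to a triangle-free pattern $T$ on $R-1$ vertices with $\alpha(T)=s-1$ (red inside $T$, blue outside), and colour every cross-edge meeting $V_1$ blue. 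As before the red graph (a disjoint union of $F_1$ and a blow-up of $T$ with red-independent classes) is triangle-free, a blue clique has at most $1+2\alpha(T)=2s-1<2s$ vertices, the independence number is $\le g_s(n)$, and the size is $e(T_R(n))+o(n^2)$; graph-packing estimates are convenient for realising the internal graphs with the stated parameters.

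For the upper bound, let $G$ be $n$-vertex and $(K_3,K_{2s-1})$-free with $\alpha(G)\le g_s(n)$, fix a $2$-colouring of $E(G)$ with the red graph $K_3$-free and the blue graph $K_{2s-1}$-free, and suppose $e(G)\ge(\tfrac12(1-\tfrac1{R-1})+\gamma)n^2$ for a constant $\gamma>0$. Apply the two-colour Szemer\'edi Regularity Lemma to get an $\epsilon$-regular partition into $M$ parts, $M$ large (bounded in terms of $\epsilon$ and an arbitrarily large $m_0$), and form the reduced graph $\Gamma$ on $[M]$: $ij$ is an edge in colour $c$ whenever $(V_i,V_j)$ is $\epsilon$-regular of $c$-density at least $d$. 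Discarding edges of $G$ inside parts, in irregular pairs, or in low-density pairs costs $o(n^2)$ once $M$ is large and $\epsilon,d$ small, so $e(G)\le e(\Gamma)(n/M)^2(1+o(1))+o(n^2)$, and it suffices to bound the reduced structure by the Tur\'an value, i.e.\ to show $\Gamma$ contains no $K_R$ (no $K_{R+1}$ in the even problem). The counting lemma already shows that $\Gamma_{\mathrm{red}}$ is triangle-free (a red triangle in $\Gamma$ gives a red $K_3$ in $G$) and $\Gamma_{\mathrm{blue}}$ is $K_{2s-1}$-free.

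Suppose then that $\Gamma$ contained a $K_R$ on parts $U_1,\dots,U_R$. Since $\Gamma_{\mathrm{red}}$ is triangle-free and $R=R(3,s)$, it contains a blue $K_s$, say on $U_1,\dots,U_s$; a contradiction must come from a blue $K_{2s-1}$ in $G$, obtained by finding, in $s-1$ of these parts, a blue internal edge together with many common blue-neighbours across the others, plus one vertex in the last part. A part with no blue internal edge has triangle-free internal graph; since $\alpha(G)\le g_s(n)$, any large subset of such a part spans a red edge, and the counting lemma then forbids that part from having a red-regular pair in $\Gamma$ — so red internal edges and blue cross-structure never coexist on one part. The remaining difficulty, and the crux of the whole proof, is to show that having too many ``blue-edge-free'' parts is impossible: one peels off red-independent (hence all-blue) subsets of such parts, applies the Ajtai--Koml\'os--Szemer\'edi-type bounds on independence numbers of $K_t$-free graphs, and runs several rounds of dependent random choice (to keep the pieces of linear size and to make the relevant cross-pairs behave like complete bipartite graphs), ending with a $G$-independent set of size exceeding $g_s(n)$. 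This is exactly where the threshold is pinned down: the factor $(\log n)^{1-1/s}$ in $g_s(n)$ reflects the growth of $R(3,t)$ and of the analogous $K_t$-free Ramsey estimates through the $s$-fold iteration, and balancing the regularity parameters against $g_s(n)$ inside these nested applications of dependent random choice is the main obstacle; the even case needs in addition the analysis of the extra colour class as in the construction, but is otherwise identical.
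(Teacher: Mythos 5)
Your lower-bound constructions are correct and essentially identical to the paper's: Kim's triangle-free graphs of independence number $O(\sqrt{m\log m})$ placed inside the classes of a blown-up $(K_3,K_s)$-free colouring of $K_{R(3,s)-1}$, respectively of $K_{R(3,s)}$ with one special class whose internal edges get the triangle colour; your verification of freeness and of the independence number matches the paper's, and no graph-packing input is actually needed beyond the existence of Kim's graphs.

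The upper bound, however, has a genuine gap at exactly the point you yourself call ``the main obstacle''. The outer shell (regularity, reduced graph, rule out $K_{R(3,s)}$, resp.\ $K_{R(3,s)+1}$, in the reduced graph, then Tur\'an) is the paper's plan, but the step that converts a blue $K_s$ of the reduced graph into a blue $K_{2s-1}$ (resp.\ $K_{2s}$) in $G$ is not carried out, and your substitute dichotomy does not suffice. Classifying parts as ``having a blue internal edge'' or not is too crude: a cluster may contain blue edges and still contain linear-sized subsets spanning no blue edge (and other linear-sized subsets spanning no red edge), in which case neither your blue embedding (which needs a blue edge inside each iterated common neighbourhood) nor your red-triangle argument (which needs every large subset of the part to span a red edge) applies to that cluster. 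The paper removes this obstruction \emph{before} regularising, via Lemma~\ref{lem-split-2-col}: one partitions $V(G)=V_1^*\cup V_2^*$ so that $\alpha(G_i[V_i^*])$ is small for each $i$, and takes the regular partition to refine this, so every cluster has small independence number in a known colour; this is what makes the per-cluster ``every large subset spans an edge of colour $j$'' statements legitimate and drives the clean case analysis (all clique clusters of colour $2$ force $t\le R(3,s)-1$; a colour-$1$ cluster forces its incident reduced edges into colour $2$ and reduces the rest to $R(3,s-1)$, resp.\ $R(3,s)$). The quantitative heart --- that the $s$-fold nesting of common-neighbourhood/dependent-random-choice steps stays above the independence threshold, which is precisely why the exponent $(\log n)^{1-1/s}$ in $g_s(n)$ appears --- is Lemma~\ref{lem-drc-j} (Claim~7.1 of Balogh--Hu--Simonovits), which the paper imports as a black box. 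You neither invoke such a lemma nor prove it: ``one peels off red-independent subsets, applies AKS-type bounds and runs several rounds of dependent random choice'' is a plan rather than an argument, and, as you concede, balancing the parameters there is the actual difficulty. Until that embedding lemma (or an equivalent, together with the cluster-splitting device) is supplied, your upper bound is incomplete.
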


\subsection{Phase transition}
Our next result concerns phase transitions of the single-coloured Ramsey-Tur\'an number. A graph $H$ has \emph{Ramsey-Tur\'an phase transition at $f$} if $$\lim_{n\rightarrow\infty}\frac{\rt(n,H,f(n))-\rt(n,H,o(f(n)))}{n^2}>0,$$ 
where $\rt(n,H,o(f(n))=\lim_{\delta \rightarrow 0}\rt(n,H,\delta\cdot f(n))$. In other words, a slightly stronger upper bound on the independence number, $o(f(n))$ instead of $f(n)$, would result in a drop at the maximum possible edge-density of an $H$-free graph (see~\cite{BHS} for more details).

From odd cliques, the result of Erd\H os-S\'os~\cite{ESos-1} shows that $K_{2s+1}$, with $s\ge 1$, has its first phase transition at $f(n)=n$, where the density drops from $\frac{1}{2}(\frac{2s-1}{2s})$ to $\frac{1}{2}(\frac{s-1}{s})$. In fact, $\rt(n,K_{2s+1},c\sqrt{n\log n})=\frac{1}{2}(\frac{s-1}{s}+o(1))n^2$ for $c>2/\sqrt{s}$. Balogh, Hu and Simonovits (\cite{BHS}, Theorem~2.7) proved that $\rt(n,K_{2s+1},o(\sqrt{n\log n}))\le\frac{1}{2}(\frac{2s-3}{2s}+o(1))n^2$, showing that the second phase transition happens at $f(n)=\sqrt{n\log n}$ (around the inverse function of $R(3,n)$). Erd\H os and S\'os~\cite{ESos-1} asked whether $\rt(n,K_5,o(\sqrt{n}))=o(n^2)$. Sudakov~\cite{Sudakov-K4} showed that it is true if a slightly stronger bound is imposed on the independence number: $\rt(n,K_5,g_2(\sqrt{n}))=o(n^2)$. Later, Balogh, Hu, Simonovits~\cite{BHS} answered Erd\H os and S\'os's question in a stronger form, showing that: $\rt(n,K_5,o(\sqrt{n\log n}))=o(n^2)$.

The situation for even cliques, $K_{2s}$ with $s\ge 2$, is again less clear apart from the first phase transition at $f(n)=n$ as shown by Erd\H os-Hajnal-Simonovits-S\'os-Szemer\'edi~\cite{EHSSSz}, where  the density decreases from $\frac{1}{2}(\frac{2s-2}{2s-1})$ to $\frac{1}{2}(\frac{3s-5}{3s-2})$. Extending a result of Sudakov~\cite{Sudakov-K4}, Balogh, Hu, Simonovits (\cite{BHS}, Theorem~3.4) showed that : $\rt(n,K_{2s},f(n))=\frac{1}{2}(\frac{s-2}{s-1}+o(1))n^2$ for any $c\sqrt{n\log n}<f(n)\le g_s(n)$ where $c>2/\sqrt{s-1}$; while Fox, Loh and Zhao~\cite{FLZ} showed that $\rt(n,K_{2s},g^*(n))=\frac{1}{2}(\frac{3s-5}{3s-2}+o(1))n^2$, where $g^*(n):=ne^{-o\left(\sqrt{\frac{\log n}{\log\log n}}\right)}$. Thus, the second phase transition for $K_{2s}$ happens somewhere in the small window between $g^*(n)$ and $g_s(n)$. The third phase transition for even cliques occurs at $f(n)=\sqrt{n\log n}$, but not a single extremal density is known except the trivial case of $K_4$. For example, $\rt(n,K_6,o(\sqrt{n\log n}))\le \frac{n^2}{6}+o(n^2)$ and we do not know whether it is $o(n^2)$. For $K_8$, Balogh, Hu and Simonovits~\cite{BHS} showed that  
\begin{itemize}
	\item $\rt(n,K_8, c\sqrt{n\log n})=\frac{n^2}{3}+o(n^2)$ for $c>2/\sqrt{3}$;
	\item $\frac{2n^2}{7}+o(n^2)\ge \rt(n,K_8,o(\sqrt{n\log n})) \ge \rt(n,K_7,o(\sqrt{n\log n}))=\frac{n^2}{4}+o(n^2)$; 
	\item $\rt(n,K_8,g_2(\sqrt{n}))=\frac{n^2}{4}+o(n^2)$,
\end{itemize}
and raised the question of whether $\rt(n,K_8,o(\sqrt{n\log n}))=\rt(n,K_7,o(\sqrt{n\log n}))$. So the Ramsey-Tur\'an density for $K_8$ drops from $1/3$ to at most $2/7$ around $\sqrt{n\log n}$. It is not clear when in between $o(\sqrt{n\log n})$ and $g_2(\sqrt{n})$, it drops to $1/4$. In the following theorem, we close this gap, proving that $\rt(n,K_8,o(\sqrt{n\log n}))=\frac{n^2}{4}+o(n^2)$. This answers Balogh-Hu-Simonovits's question positively and provides the first exact value of nontrivial extremal density for the third phase transition of an even clique.

\begin{theorem}\label{thm-RTK8}
	For any $\gamma>0$, there exists $\delta>0$ such that the following holds. Let $G$ be an $n$-vertex $K_8$-free graph with $\alpha(G)\le \delta\sqrt{n\log n}$. Then $e(G)\le \frac{n^2}{4}+\gamma n^2$.
\end{theorem}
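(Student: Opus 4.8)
The plan is to run the Szemer\'edi regularity method and reduce the problem to an extremal question about the reduced graph. Apply the regularity lemma to $G$ with a small parameter $\varepsilon$ to obtain clusters $V_1,\dots,V_k$ together with an exceptional set of size at most $\varepsilon n$, where $k\le M_0(\varepsilon)$, and let $R$ be the reduced graph on $[k]$ in which $ij$ is an edge precisely when $(V_i,V_j)$ is $\varepsilon$-regular with density at least $d$ (here $d,\varepsilon$ are small constants to be chosen after $\gamma$, and $\delta$ is chosen last, small in terms of $k$ and $d$). A routine edge count shows
$$e(G)\le e(R)\cdot(n/k)^2+\Big(d+O(\varepsilon)+\tfrac{1}{2k}\Big)n^2,$$
with the error terms accounting for edges inside clusters, in sparse or irregular pairs, and at the exceptional set. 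Hence it suffices to prove that $R$ is triangle-free: by Mantel's theorem this gives $e(R)\le k^2/4$, and so $e(G)\le n^2/4+\gamma n^2$ once $d,\varepsilon$ are small enough and $k$ large enough.

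The heart of the argument is the claim that a triangle $\{i,j,\ell\}$ in $R$ forces a copy of $K_8$ (indeed $K_9$) in $G$, contradicting $K_8$-freeness. The constraint $\alpha(G)\le\delta\sqrt{n\log n}$ enters through the Ramsey bound $R(3,t)=\Theta(t^2/\log t)$: any vertex set $X$ of size at least $R(3,\delta\sqrt{n\log n}+1)\lesssim\delta^2 n$ induces a triangle in $G$, and in particular, since $|V_i|=\Theta(n/k)$, each cluster and each linear-sized subset of a cluster contains a triangle provided $\delta$ is small in terms of $k$. To exploit the regular triple I would use dependent random choice: in an $\varepsilon$-regular pair of density at least $d$ with linear-sized sides there is a linear-sized subset $U$ of one side such that every $3$-subset of $U$ has many common neighbours on the other side; applying this to $(V_\ell,V_i)$ and then, after restriction, to $(V_\ell,V_j)$, one obtains a large $U\subseteq V_\ell$ every $3$-subset of which has a large common neighbourhood in each of $V_i$ and $V_j$. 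Choosing a triangle $T_\ell\subseteq U$ and letting $W_i,W_j$ be its common neighbourhoods in $V_i,V_j$ — these are large and induce an $\varepsilon'$-regular pair of density at least $d/2$ — the same pair argument run inside $(W_i,W_j)$ yields triangles $T_i\subseteq W_i$ and $T_j\subseteq W_j$ with all cross edges present, and then $T_i\cup T_j\cup T_\ell$ spans a $K_9$.

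The main obstacle is quantitative, and I expect the bulk of the work to go here. Because $\delta$ is a fixed constant rather than $o(1)$, the Ramsey threshold $R(3,\delta\sqrt{n\log n})$ is of order $\delta^2 n$, i.e.\ linear in $n$, whereas a direct application of dependent random choice at constant density only produces a subset of size roughly $n^{1-\kappa}$ for some $\kappa=\kappa(d,\delta)>0$, which is too small to be guaranteed to contain a triangle or to have the common-neighbourhood large enough to contain one. Overcoming this requires a sharper handle on graphs of small independence number — for instance showing that a linear-sized set with $\alpha\le\delta\sqrt{n\log n}$ contains not merely one but many, suitably well-distributed, triangles so that at least one is compatible with the regular pair, or iterating the regular structure, or bringing in graph-packing results to constrain the internal structure of the clusters $G[V_i]$ (as the abstract hints). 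Once the key step ``dense regular triple $\Rightarrow K_8$'' is secured, the remainder — Mantel on $R$, the edge count above, and fixing a hierarchy $1/n\ll\delta\ll 1/k,d,\varepsilon\ll\gamma$ with $\varepsilon$ small enough relative to $d$ for the restricted pairs to stay regular — is routine.
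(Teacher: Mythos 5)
Your reduction to the reduced graph is fine, but the key step you rely on --- ``a triangle $\{i,j,\ell\}$ in $R$ forces a copy of $K_8$ (indeed $K_9$) in $G$'' --- is exactly the step that is not available at constant density, and the paper deliberately does \emph{not} prove that $R$ is triangle-free. The obstruction is the one you half-identify but do not overcome: to use a triangle $T_\ell$ inside a cluster you must control the common neighbourhood of a \emph{specific} triple of vertices, and neither $\eps$-regularity nor dependent random choice does this at density $d$. DRC strong enough to make \emph{every} triple of the output set good forces the output set to have size $N^{1-\kappa}$ (sublinear in the cluster size $N=n/m$), while a triangle is only guaranteed, via $R(3,k)$, in sets of size $\Omega(\delta^2 n)$, which is linear; so you cannot find a triangle inside the DRC set, and conversely an arbitrary triangle of $G[V_\ell]$ may have empty common neighbourhood across a pair of density $d<2/3$. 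The same defect recurs in your inner step (finding $T_i\subseteq W_i$, $T_j\subseteq W_j$ with all cross edges): for a specific triangle $T_i$ the bound $|N(T_i,W_j)|\ge 3\delta(G[W_i,W_j])-2|W_j|$ is useless unless the density exceeds $2/3$. What these techniques do give is: an \emph{edge} $uv$ (not a triangle) with linear common neighbourhoods, because the DRC set of size $\approx N^{2/3}$ exceeds $\alpha(G)\le\delta\sqrt{n\log n}$; iterating this yields only $2+2+3=7$ vertices from a triangle of $R$, i.e.\ $K_7$, which is why the $K_7$ case was known and $K_8$ was open. Your suggested fixes (``many well-distributed triangles'', ``iterate the regular structure'', ``graph packings'') are not developed into an argument, so the proposal has a genuine gap at its central claim.

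For comparison, the paper's route accepts that $R$ may contain triangles and only excludes \emph{chubby} ones, i.e.\ triangles in which some pair has density at least $2/3+\gamma$: there one finds an edge $uv$ via DRC (Lemma~\ref{lem-drc-v}), a triangle $T$ in $N(uv,V_2)$ via Shearer's theorem, and then --- using the density $>2/3$ --- the set $N(T,V_3')$ is still large and contains another triangle, giving $2+3+3=8$ vertices. The global count is then closed differently from Mantel: $R$ is $K_4$-free (Lemma~\ref{lem-drc-j}, which needs only nested \emph{edges} and so works at density $\gamma$), one writes $e(R)=m^2/4+t$, and the Gy\H{o}ri--Keszegh theorem (Theorem~\ref{thm-disj-k3}) decomposes $E(R)$ into $m^2/4-2t$ edges plus $t$ edge-disjoint triangles; since no triangle is chubby, each such triangle carries at most $(2+3\gamma)(n/m)^2$ edges of $G$ rather than $3(n/m)^2$, which exactly cancels the excess $t$ and yields $e(G)\le n^2/4+\gamma n^2$. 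This compensation mechanism is the heart of the proof and is entirely absent from your proposal.
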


\medskip

\noindent\textbf{Organisation of the paper.} In Section~\ref{sec-prelim}, we give preliminaries necessary for the proofs. Then we present the proofs of Theorem~\ref{thm-RTK8} in Section~\ref{sec-K8}, and the lower bounds in Theorem~\ref{thm-K3Ks} in Section~\ref{sec-lowerbound-2col}.
We then prove the upper bounds in Theorem~\ref{thm-K3Ks} in Sections~\ref{sec-K3Ks} and~\ref{sec-K3K3}. The proof of Theorem~\ref{thm-3q-sm-ind} will be given in Section~\ref{sec-3q}. Finally in Section~\ref{sec-remarks}, we make some concluding remarks.
\section{Preliminaries}\label{sec-prelim}
In this section, we introduce some notation, tools and lemmas. Denote $[q]:=\{1,2,\ldots,q\}$, $[p,q]:=\{p,p+1,\ldots, q\}$, and ${X\choose i}$ (resp.\ ${X\choose \le i}$) denotes the set of all subsets of a set $X$ of size $i$ (resp.\ at most $i$).  We may abbreviate a singleton $\{x\}$ (resp.\ a pair $\{x,y\}$) as $x$ (resp.\ $xy$). If we claim that a result holds whenever $0<b\ll a\ll 1$, this means that there are a constant $a_0\in (0,1)$ and a non-decreasing function $f : (0, 1) \rightarrow (0, 1)$ (that may depend on any previously defined constants or functions) such that the result holds for all $a,b\in (0,1)$ with $a\le a_0$ and $ b \leq f(a)$. We write $a=b\pm c$ if $b-c\leq a\leq b+c$. We may omit floors and ceilings when they are not essential.

Let $G=(V,E)$ be a graph and $A,B,V_1,\ldots,V_p\subseteq V$. Denote by $\overline{A}:=V\setminus A$ the complement of $A$. Let $G[A]:=(A,\{xy\in E: x,y\in A\})$ denote the induced subgraph of $G$ on $A$, and denote by $N(A,B)$ the common neighbourhood of $A$ in $B$. Write $N(v,B)$ instead of $N(\{v\}, B)$, and $d(v,B)=|N(v,B)|$. Denote by $G[V_1,\ldots, V_p]$ the $p$-partite subgraph of $G$ induced by $p$-partition $V_1\cup\dots \cup V_p$. We say that a partition $U_1\cup\ldots\cup U_p$ of $V$ is a \emph{max-cut $p$-partition} of $G$ if $e(G[U_1,\ldots,U_p])$ is maximised among all $p$-partition of $V$. Denote by $\mincr(G[V_1,\ldots, V_p]):=\min_{ij\in{[p]\choose 2},~v\in V_i}d(v,V_j)$ the \emph{minimum crossing degree} of $G$ with respect to the partition $V_1\cup \ldots\cup V_p$. 
For each $n,p\in [n]$, $T_p(n)$ denotes the $n$-vertex Tur\'an graph, which is the $n$-vertex complete $p$-partite graph such that each partite sets has size either $\lfloor n/p \rfloor$ or $\lceil n/p \rceil$.
For two $n$-vertex graphs $G$ and $H$, we define 
$|G\triangle H|$ be the minimum number $N=N_1+N_2$ such that we can obtain a graph isomorphic to $H$ after deleting $N_1$ edges from $G$ and adding $N_2$ edges to $G$.

Given $\phi: E(G)\rightarrow [k]$, throughout the paper, for each $i\in [k]$, we will always denote $G_i$ the spanning subgraph of $G$ induced by all edges of colour $i$. We say that $\phi$, and also $G$, is \emph{$(K_{s_1},\ldots,K_{s_k})$-free} if $G_i$ is $K_{s_i}$-free for each $i\in [k]$. We will write $\phi(A,B)=i$ if $\phi(e)=i$ for all $e\in E(G[A,B])$, and write $\phi(v,B)$ instead of $\phi(\{v\},B)$. If $\phi$ is also defined on $V(G)$, we write $\phi(A)=i$ if $\phi(v)=i$ for all $v\in A$. The following result will be useful.

\begin{theorem}[\cite{GK}]\label{thm-disj-k3}
	Let $G$ be an $n$-vertex $K_4$-free graph with $e(G)\ge n^2/4+t$. Then $G$ contains at least $t$ edge-disjoint triangles.
\end{theorem}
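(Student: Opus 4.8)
The plan is to establish, by strong induction on $n$, the following reformulation: \emph{if $G$ is a $K_4$-free graph on $n$ vertices with $e(G)\ge\lfloor n^2/4\rfloor+t$ for an integer $t\ge 0$, then $G$ contains $t$ pairwise edge-disjoint triangles.} Since $e(G)$ is an integer, $e(G)\ge n^2/4+t$ forces $e(G)\ge\lfloor n^2/4\rfloor+\lceil t\rceil$, so this implies Theorem~\ref{thm-disj-k3}. If $t=0$ there is nothing to prove; if $G$ is triangle-free then $e(G)\le\lfloor n^2/4\rfloor$ by Mantel's theorem; and small $n$ can be checked by hand. So assume $t\ge 1$, that $G$ has a triangle, and that $n$ is large, and write $\nu_\triangle(G)$ for the largest size of a pairwise edge-disjoint family of triangles in $G$.

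First I would dispose of the case that $G$ has a vertex $v$ of degree at most $\lfloor n/2\rfloor$. Using the elementary identity $\lfloor n^2/4\rfloor-\lfloor(n-1)^2/4\rfloor=\lfloor n/2\rfloor$, the $K_4$-free graph $G-v$ on $n-1$ vertices satisfies
\[
e(G-v)=e(G)-d(v)\ \ge\ \lfloor n^2/4\rfloor+t-\lfloor n/2\rfloor\ =\ \lfloor(n-1)^2/4\rfloor+t ,
\]
so the induction hypothesis already gives $t$ edge-disjoint triangles inside $G-v\subseteq G$. The amount $\lfloor n/2\rfloor$ gained from the floor exactly cancels the removed degree, so this step cannot be pushed past a vertex of degree $>\lfloor n/2\rfloor$: the rest of the proof must handle, \emph{without} deleting vertices, the case $\delta(G)\ge\lfloor n/2\rfloor+1$, in which $2e(G)\ge n(\lfloor n/2\rfloor+1)$ forces $t\ge n/4$ while Tur\'an's theorem forces $t\le e(T_3(n))-\lfloor n^2/4\rfloor\le n^2/12+1$.

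Within this main case, suppose in addition that $G$ is $3$-colourable, with colour classes of sizes $n_1\le n_2\le n_3$, so that $G$ is a spanning subgraph of $K:=K_{n_1,n_2,n_3}$. Viewing the third class as $\mathbb Z_{n_3}$ and the first two as $\{0,\dots,n_1-1\}$ and $\{0,\dots,n_2-1\}$, the triangles $\{i,\,j,\,(i+j)\bmod n_3\}$ taken over all pairs $(i,j)$ are pairwise edge-disjoint (the class-$1$/class-$2$ edges come from distinct pairs, and the maps $i\mapsto(i+j)\bmod n_3$ and $j\mapsto(i+j)\bmod n_3$ are injective on $\{0,\dots,n_1-1\}$ and $\{0,\dots,n_2-1\}$ respectively, as $n_1,n_2\le n_3$), so $\nu_\triangle(K)\ge n_1n_2$. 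Since adding a single edge to any graph raises $\nu_\triangle$ by at most $1$,
\[
\nu_\triangle(G)\ \ge\ \nu_\triangle(K)-\big(e(K)-e(G)\big)\ \ge\ n_1n_2-e(K)+e(G)\ =\ e(G)-n_3(n-n_3)\ \ge\ e(G)-\lfloor n^2/4\rfloor\ \ge\ t ,
\]
using $n_3(n-n_3)\le\lfloor n^2/4\rfloor$. In particular, by the Andr\'asfai--Erd\H os--S\'os theorem (a $K_4$-free graph with $\delta(G)>\tfrac58 n$ is $3$-colourable), this also settles everything with $\delta(G)>\tfrac58 n$.

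The hard part will be the remaining window $\lfloor n/2\rfloor<\delta(G)\le\tfrac58 n$ with $G$ not $3$-colourable. Here I would feed $G$ through the Andr\'asfai--Erd\H os--S\'os-type structure theory for $K_4$-free graphs of large minimum degree: such a $G$ is a spanning subgraph of a ($K_4$-free) blow-up $F[n_1,\dots,n_k]$ of one of finitely many small $K_4$-free graphs $F$, and then the ``at most $1$ per missing edge'' monotonicity used above reduces the problem to verifying, for each such target $F$, that $\nu_\triangle\big(F[n_1,\dots,n_k]\big)\ge e\big(F[n_1,\dots,n_k]\big)-\lfloor n^2/4\rfloor$ for every admissible tuple of blow-up sizes. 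The two places where work concentrates are: confirming that the structural classification really descends all the way to $\delta(G)>\lfloor n/2\rfloor$ rather than stopping at $\tfrac58 n$; and carrying out the \emph{exact} triangle-packing count on the blow-ups of the few non-$3$-colourable $F$, whose triangles overlap in edges, so that the clean complete-tripartite decomposition above must be replaced by a more careful design-theoretic argument. A structure-free alternative would start from a maximum cut $V=A\cup B$ — where $e(G[A])+e(G[B])\ge e(G)-|A||B|\ge t$, every vertex sends more than $n/4$ edges across the cut (so $n/4<|A|,|B|<3n/4$), and each inside edge $uv$ has $N(u)\cap N(v)$ independent by $K_4$-freeness — and then attach pairwise edge-disjoint triangles to $t$ of the inside edges using apexes on the opposite side via a greedy/defect-Hall assignment, splitting the inside graph into a large matching and a bounded number of high-degree ``book'' centres; but making this simultaneous assignment succeed, especially when $t$ is a positive fraction of $n^2$, is itself the genuine obstacle.
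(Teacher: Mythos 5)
Your reductions are fine as far as they go: the deletion of a vertex of degree at most $\lfloor n/2\rfloor$ using $\lfloor n^2/4\rfloor-\lfloor (n-1)^2/4\rfloor=\lfloor n/2\rfloor$, the packing of $n_1n_2$ edge-disjoint triangles in $K_{n_1,n_2,n_3}$ together with the observation that deleting an edge lowers the packing number by at most one, and the Andr\'asfai--Erd\H os--S\'os step for $\delta(G)>\tfrac58 n$ are all correct. But the statement is not proved: the whole case $\lfloor n/2\rfloor<\delta(G)\le\tfrac58 n$ with $G$ not $3$-colourable is left open, and that is precisely where the content of the Gy\H ori--Keszegh theorem lies (the paper itself only cites \cite{GK} and offers no proof you could lean on). You acknowledge the gap, but the two repair routes you sketch do not close it. The blow-up route cannot work throughout the window you need: the chromatic threshold of $K_4$ is $3/5$, so for every fixed $\epsilon>0$ there are $K_4$-free graphs with minimum degree above $(\tfrac12+\epsilon)n$ (indeed anywhere below $\tfrac35 n$) whose chromatic number is unbounded; such graphs are not spanning subgraphs of blow-ups of any finite family of fixed $K_4$-free graphs, so the structural classification you invoke simply does not ``descend'' below $\tfrac35 n$, and even on $[\tfrac35 n,\tfrac58 n]$ you would still have to carry out the exact packing count on the non-$3$-partite targets, which you do not attempt.

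The alternative max-cut argument is likewise only a plan: after fixing a max cut $A\cup B$ you must attach edge-disjoint apexes to $t$ inside edges where $t$ can be of order $n^2$, and the ``greedy/defect-Hall assignment'' that is supposed to do this simultaneously (while the common neighbourhoods $N(u)\cap N(v)$ of inside edges are independent sets and may overlap heavily across different inside edges) is exactly the difficulty you name but do not resolve. So as it stands the proposal proves the theorem only for $3$-colourable $G$ and for $\delta(G)>\tfrac58 n$ (plus the trivial low-degree reduction); the remaining regime needs a genuinely new idea, not an appeal to structure theory that fails below the chromatic threshold.
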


Given $d,n\in\mathbb{N}$, denote by $F(n,d)$ an $n$-vertex $d$-regular triangle-free graph with $\alpha(G)=d$. Let $\mathcal{B}\subseteq (0,1)$ consists of all the rationals $\delta$ for which there exists some $F(n,d)$ with $d/n=\delta$. We will use a result of Brandt~\cite{Brandt}, which states that $\mathcal{B}$ is dense in $(0,1/3)$, in the following form.

\begin{theorem}[\cite{Brandt}]\label{thm-Brandt}
	For any $0<\eta, \delta<1/3$, there exists $n_0>0$ such that the following holds for all $n\ge n_0$. For some $d\in [(\delta-\eta)n,\delta n]$,
	there exists a graph $F(n,d)$.
\end{theorem}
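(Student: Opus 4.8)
\textbf{Proof proposal for Theorem~\ref{thm-Brandt}.}

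\medskip

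Since this is quoted as a result of Brandt, the plan is not to reprove it from scratch but to explain how the stated density statement follows from the known existence of a dense family of triangle-free regular graphs with large independence number, together with a standard blow-up/padding argument to fill in the density gaps. The key external input is that there exist arbitrarily large triangle-free $d$-regular graphs $H$ on $m$ vertices whose independence number equals $d$ (equivalently, whose fractional chromatic number is exactly $m/d$), and that the set of achievable ratios $d/m$ is dense in $(0,1/3)$; Brandt's construction (or the Kneser-type and cyclic constructions underlying it) supplies such graphs with ratios accumulating at every point of $(0,1/3)$. Given $0<\eta,\delta<1/3$, first I would fix such a graph $H=F(m,d_0)$ with $d_0/m \in [\delta-\eta/2,\delta]$ and $m$ a fixed constant depending only on $\delta,\eta$.

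Next I would pass from the fixed-size graph $H$ to an $n$-vertex graph for all large $n$ by taking a \emph{balanced blow-up}: replace each vertex of $H$ by an independent set of size either $\lfloor n/m\rfloor$ or $\lceil n/m\rceil$ and each edge by a complete bipartite graph between the corresponding parts. The blow-up of a triangle-free graph is triangle-free, and the blow-up is roughly $(d_0 n/m)$-regular; more precisely every degree lies in $[(\delta-\eta)n,\delta n]$ once $n$ is large enough relative to $m$ and once $d_0/m$ was chosen close enough to $\delta$. The independence number of the blow-up is the size of the largest union of blown-up independent sets of $H$, which is $(\alpha(H)+o(1))\cdot n/m = (d_0/m+o(1))n$, hence also in $[(\delta-\eta)n,\delta n]$ after adjusting constants; this is where I use $\alpha(H)=d_0$ exactly rather than merely $\alpha(H)\ge d_0$. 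A small amount of care is needed to make the blow-up genuinely regular rather than only near-regular: one can either appeal to Brandt's original argument (which produces exactly regular graphs at a dense set of ratios directly, so that taking equal-size parts in the blow-up already yields a regular graph), or note that the problem statement only needs \emph{some} $d$ in the interval $[(\delta-\eta)n,\delta n]$ with an exactly $d$-regular $F(n,d)$, and rational approximation of $\delta$ by $d_0/m$ together with $m\mid n$ along a subsequence handles divisibility; the general $n$ is then obtained by a further short interpolation or by observing the interval $[(\delta-\eta)n,\delta n]$ has length $\eta n\to\infty$ so it contains an admissible value.

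The main obstacle is precisely this regularity/divisibility bookkeeping: ensuring that a single universal $n_0$ works for all $n\ge n_0$, rather than only for $n$ in a sparse arithmetic progression. I would resolve it by noting that density of $\mathcal{B}$ in $(0,1/3)$ means we may choose, for each residue class of $n$ modulo the relevant modulus, a suitable base graph and base ratio; alternatively, since the target interval $[(\delta-\eta)n,\delta n]$ is wide, one can take a blow-up with slightly unequal parts and then delete a bounded-degree spanning subgraph (e.g.\ a union of a few Hamilton cycles of a balanced blow-up, which exist since balanced complete multipartite blow-ups are highly connected) to reduce all degrees to a common value $d$ without creating triangles, landing inside the interval. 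Either way the triangle-freeness is automatic and the independence number is controlled from above by the same blow-up computation and from below by the exactness $\alpha(H)=d_0$, so $\alpha(F(n,d))=d$ as required.
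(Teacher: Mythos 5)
The paper itself gives no proof of Theorem~\ref{thm-Brandt}: it is quoted from Brandt's paper, with the density of $\mathcal{B}$ in $(0,1/3)$ cited as the underlying result. So what your write-up has to supply is exactly the bridge you attempt in your second and third paragraphs: passing from ``for each ratio in a dense set there exists \emph{some} graph $F(m,d_0)$'' to ``for \emph{every} sufficiently large $n$ there is an $F(n,d)$ with $d\in[(\delta-\eta)n,\delta n]$''. Your balanced blow-up step is fine: replacing each vertex of $F(m,d_0)$ by an independent set of size $t$ preserves triangle-freeness and regularity, and the blow-up has independence number $t\,\alpha(F(m,d_0))=t d_0$, so it is an $F(mt,td_0)$. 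But this only covers $n$ divisible by $m$, and none of your three suggested repairs for general $n$ works. (i) Regularising an unequal-parts blow-up by deleting a bounded-degree spanning subgraph cannot yield an $F(n,d)$: in any triangle-free graph $\alpha\ge\Delta$, edge deletion can only increase the independence number, and the common degree $d$ you end with is at most the minimum degree of the unbalanced blow-up, which is strictly below its maximum degree in every case where your deletion is actually needed; hence the final graph satisfies $\alpha\ge\Delta>d$, violating the requirement $\alpha=d$. (Deleting Hamilton cycles is moreover a \emph{regular} deletion, so it cannot even equalise unequal degrees.) (ii) Choosing ``a base graph per residue class'' does not help either: a base graph on $m_i$ vertices only produces orders divisible by $m_i$, and finitely many moduli cannot cover all large $n$ (take $n$ prime). (iii) The observation that $[(\delta-\eta)n,\delta n]$ has length $\eta n$ confuses the freedom in choosing $d$ with the actual difficulty, which is producing a graph on exactly $n$ vertices whose independence number equals its degree.

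So there is a genuine gap: the ``for all $n\ge n_0$'' clause is precisely the content that density of $\mathcal{B}$ plus balanced blow-ups does not deliver, and it is the part your argument leaves unproved. Closing it requires either using the flexibility of Brandt's constructions themselves (which give suitable graphs at every sufficiently large order with ratio near any target, not just one sporadic order per ratio), or a padding argument that preserves \emph{both} exact regularity and the exact equality $\alpha=d$ simultaneously — and, as the computation above shows, adding or deleting a small subgraph generically destroys one of the two. Since the paper treats the statement purely as a citation, invoking Brandt directly is legitimate; but the derivation as you have written it does not establish the quoted form.
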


The following is a result of F\"uredi proving stability of $K_{p+1}$-free graphs.
\begin{theorem}[\cite{Furedi}]\label{thm-furedi-stab}
	Suppose that $t\in \mathbb{N}$ and $G$ is an $n$-vertex $K_{p+1}$-free with $e(G)\ge e(T_{n,p})-t$. Then there exists $v_1,\ldots,v_p$ such that $e(K_{v_1,\ldots,v_p})\ge e(T_{n,p})-2t$ and
	$|G\triangle K_{v_1,\ldots,v_p}|\le 3t$. Consequently, $v_i=n/p\pm 2\sqrt{t}$ for all $i\in [p]$ and $|G\triangle T_p(n)|= O(\sqrt{t}n)$.
\end{theorem}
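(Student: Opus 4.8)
The plan is to prove the stability statement by a self-contained symmetrisation-and-counting argument. Let $G$ be $K_{p+1}$-free with $e(G)\ge e(T_p(n))-t$, and pick a maximum clique $K$ in $G$; since $G$ is $K_{p+1}$-free, $|K|\le p$. First I would apply the Zykov symmetrisation operation: if $u,v$ are non-adjacent vertices with $d(u)>d(v)$, replace $v$ by a twin of $u$ (delete all edges at $v$, join $v$ to $N(u)$); this does not decrease the edge count and keeps the graph $K_{p+1}$-free, and it makes ``non-adjacency'' an equivalence relation. Iterating, one reaches a complete multipartite graph $K_{v_1,\dots,v_r}$ with $r\le p$ and at least $e(G)\ge e(T_p(n))-t$ edges; since a complete multipartite graph on $n$ vertices with $r<p$ parts has at most $e(T_{p-1}(n))<e(T_p(n))-\Omega(n^2)$ edges, for $t=o(n^2)$ we must have $r=p$. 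This already yields $v_1,\dots,v_p$ with $e(K_{v_1,\dots,v_p})\ge e(T_p(n))-t$; then among all complete $p$-partite graphs with prescribed total $n$, the edge count $e(K_{v_1,\dots,v_p})=\binom n2-\sum_i\binom{v_i}2$ is maximised by the balanced one, and a second-order Taylor expansion of $\sum\binom{v_i}2$ around $v_i=n/p$ gives $e(T_p(n))-e(K_{v_1,\dots,v_p})=\frac12\sum_i(v_i-n/p)^2+O(n)$, so $\sum_i(v_i-n/p)^2\le 2t+O(n)$ and hence $v_i=n/p\pm 2\sqrt t$ once $t\gg n$ (the sublinear regime where the statement is used); the $O(n)$ error can be absorbed into the constant, or handled by noting the claim is only nontrivial for $t$ of order at least $n$.

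The remaining, and main, point is to bound $|G\triangle K_{v_1,\dots,v_p}|$ by $3t$ using the $v_1,\dots,v_p$ just produced, rather than some graph built by symmetrisation (symmetrisation destroys the original vertex identities). For this I would instead argue directly on $G$. Let $V_1,\dots,V_p$ be a \emph{max-cut $p$-partition} of $G$ and set $a=\sum_{ij}e(G[V_i,V_j])$ the number of crossing edges, $b=\sum_i e(G[V_i])$ the number of non-crossing edges, $c$ the number of non-adjacent crossing pairs, so $a+b=e(G)$ and $a+c=\binom n2-\sum_i\binom{|V_i|}2\le e(T_p(n))$. Then
\begin{equation}\label{eq-symdiff-bound}
|G\triangle K_{|V_1|,\dots,|V_p|}|\;=\;b+c\;=\;\bigl(e(G)-a\bigr)+\bigl(e(T_p(n))-a-(e(T_p(n))-a-c)\bigr),
\end{equation}
so it suffices to show $b+c\le 3t$ granted $a+b\ge e(T_p(n))-t$. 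The key structural input is that in a max-cut partition of a $K_{p+1}$-free graph, every vertex $v\in V_i$ has $d(v,V_i)\le d(v,V_j)$ for all $j$ (else moving $v$ increases the cut), and combining this with $K_{p+1}$-freeness one shows each internal edge $xy\subseteq V_i$ ``destroys'' roughly a linear number of crossing edges incident to $\{x,y\}$ — more precisely, a standard Erd\H os-type counting (as in the Erd\H os–Simonovits stability proof) shows that if $b$ is too large then one can find a $K_{p+1}$ or produce a cheaper partition, forcing $b=O(t)$, and then $c=(e(T_p(n))-a)-(e(T_p(n))-a-c)\le t - (b - (e(G)-a)) \le O(t)$ as well. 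Tightening the constants to the stated $2t$ and $3t$ is the delicate bookkeeping step.

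The main obstacle I anticipate is precisely this last quantitative tightening: getting the clean constants $e(K_{v_1,\dots,v_p})\ge e(T_p(n))-2t$ and $|G\triangle K_{v_1,\dots,v_p}|\le 3t$ (as opposed to $O(t)$) requires a careful choice of which complete $p$-partite graph to compare against (the max-cut one, whose part sizes are then shown to be near-balanced) and a tight accounting of internal versus missing crossing edges, using that removing one internal edge while adding its endpoints' missing crossing edges cannot increase $e(G)$ beyond $e(T_p(n))$. Since the paper only ever applies this with $t=o(n^2)$ and in fact $t$ at least polynomial in $n$, I would, if the exact constants proved annoying, simply cite \cite{Furedi} for the precise bound and present the symmetrisation argument above as the conceptual proof of the weaker $|G\triangle T_p(n)|=O(\sqrt t\,n)$ consequence, which is all that is needed downstream; but the intended route is to follow F\"uredi's original stability argument verbatim, which already yields the stated constants.
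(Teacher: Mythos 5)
This statement is imported: the paper gives no proof of Theorem~\ref{thm-furedi-stab} and simply cites F\"uredi, so your fallback option (``cite \cite{Furedi}'') is exactly what the authors do. Judged as a self-contained argument, however, your sketch has one genuine gap, and it sits precisely where the whole content of the theorem lies: the bound on the number of internal edges. Write $a,b,c$ for the crossing edges, internal edges and missing crossing pairs of a $p$-partition of $G$. Once one knows that \emph{some} $p$-partition satisfies $b\le t$, everything follows by trivial counting: $a=e(G)-b\ge e(T_p(n))-2t$, hence $e(K_{v_1,\dots,v_p})=a+c\ge e(T_p(n))-2t$; and $c\le e(T_p(n))-a\le (e(G)+t)-(e(G)-t)=2t$, so $|G\triangle K_{v_1,\dots,v_p}|=b+c\le 3t$, with the balance $v_i=n/p\pm2\sqrt t$ coming from your quadratic computation (whose integrality error is only $O(p)$, not $O(n)$, so no restriction like $t\gg n$ is needed). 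But your justification of $b=O(t)$ — ``a standard Erd\H os-type counting as in the Erd\H os--Simonovits stability proof forces $b=O(t)$'' — is not a proof: the classical stability argument gives qualitative ($o(n^2)$-type) conclusions and does not produce a linear-in-$t$ bound, let alone with the constant needed for $3t$. Your displayed identity for $b+c$ is also circular (its right-hand side simplifies back to $b+c$), so it carries no content, and the Zykov symmetrisation half only constrains the part sizes of an abstract multipartite graph unrelated to the partition you compare $G$ with, so it contributes nothing to the symmetric-difference bound.

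What actually closes the gap (and is F\"uredi's point) is a different partition and a stronger inequality: take the Erd\H os degree-majorisation partition — $x_1$ of maximum degree, $V_p:=V\setminus N(x_1)$, and recurse inside the $K_p$-free graph $G[N(x_1)]$. A two-line induction, using only $d_G(v)\le d_G(x_1)$ for every $v\in V_p$ (so that $\sum_{v\in V_p}d_G(v)\le |V_p|\,|N(x_1)|$), yields $e(G)+\sum_i e(G[V_i])\le e(K_{|V_1|,\dots,|V_p|})\le e(T_p(n))$, hence $\sum_i e(G[V_i])\le t$, and the counting above finishes the proof with the stated constants. (Since a max-cut $p$-partition minimises internal edges, your max-cut partition inherits $b\le t$ a posteriori, but one cannot start from it and hand-wave the bound.) So either carry out F\"uredi's argument as just described, or do what the paper does and cite \cite{Furedi}; as written, the key quantitative step is missing.
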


The following theorem follows from Shearer's bound on Ramsey number $R(3,k)\le (1+o(1))k^2/\log k$ (see also~\cite{BK,DJPR,PGM} for more recent development on $R(3,k)$).
\begin{theorem}\label{thm: Shearer}\cite{Shearer}
There exists $k_0\in \mathbb{N}$ such that for all $k\geq k_0$, any graph on 
at least $2k^2/\log{k}$ vertices contains either a triangle or an independent set of size $k$.
\end{theorem}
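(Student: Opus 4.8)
The plan is to obtain this as an immediate corollary of Shearer's upper bound on the Ramsey number $R(3,k)$. By definition, $R(3,k)$ is the least integer $N$ for which every graph on $N$ vertices contains a triangle or an independent set of size $k$ (the triangle playing the role of a ``blue $K_3$'' and the independent $k$-set the role of a ``red $K_k$'' in the two-colouring language). So the theorem asserts precisely that $2k^2/\log k \ge R(3,k)$ once $k$ is large. Shearer's theorem gives $R(3,k) \le (1+o(1))\,k^2/\log k$ as $k\to\infty$ (this holds regardless of the base of the logarithm, since changing bases only rescales by a constant factor), hence there is $k_0$ with $R(3,k)\le 2k^2/\log k$ for all $k\ge k_0$. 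With this $k_0$, any graph $G$ on at least $2k^2/\log k\ge R(3,k)$ vertices contains a triangle or an independent set of size $k$, which is the claim; floors and ceilings are irrelevant here because the hypothesis is only a lower bound on $|V(G)|$, and deleting vertices down to exactly $\lceil 2k^2/\log k\rceil$ creates no new structure.

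For a self-contained treatment one would reprove Shearer's bound, whose core is the independent-set lemma: every triangle-free graph $H$ on $m$ vertices with average degree $\bar d$ has $\alpha(H)\ge f(\bar d)\,m$, where $f(d)=\frac{d\log d-d+1}{(d-1)^2}=(1+o(1))\frac{\log d}{d}$. The only structural input is that in a triangle-free graph $N(v)$ is independent for every vertex $v$; this is fed into the analysis of a uniformly random permutation of $V(H)$ together with a convexity (Jensen) step. Granting this lemma, let $G$ be triangle-free on $n=\lceil 2k^2/\log k\rceil$ vertices. If some vertex has degree at least $k$, then its (independent) neighbourhood is already an independent $k$-set; otherwise every degree, hence $\bar d$, is less than $k$, and since $f(d)\,n$ is decreasing in $d$ on the relevant range, $\alpha(G)\ge f(\bar d)\,n\ge f(k)\,n\sim \frac{\log k}{k}\cdot\frac{2k^2}{\log k}=2k>k$ for $k$ large. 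Either way $\alpha(G)\ge k$.

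Since Shearer's theorem is the genuine content and we are entitled to use it, the first route has essentially no obstacle beyond the bookkeeping of absorbing the $o(1)$ into the constant $2$ for $k\ge k_0$. If one insists on the self-contained version, the single nontrivial ingredient is the independent-set lemma; its proof is standard (see Shearer, or Alon--Spencer) and I would quote it rather than redo the probabilistic computation, so that even then the remaining work reduces to the one-line degree case analysis above.
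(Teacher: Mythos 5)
Your proposal is correct and matches the paper's treatment: the paper gives no proof of this statement, simply noting that it follows from Shearer's bound $R(3,k)\le(1+o(1))k^2/\log k$, which is exactly your first (and sufficient) route of absorbing the $(1+o(1))$ factor into the constant $2$ for $k\ge k_0$. The self-contained sketch via the independent-set lemma is fine but unnecessary here.
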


We will make use of the multicolour version of the Szemer\'edi Regularity Lemma (see, for example,~\cite[Theorem 1.18]{KomlosSimonovits96}). We introduced the relevant definitions. Let $X,Y\subseteq V(G)$ be disjoint non-empty sets of vertices in a graph $G$. The \emph{density} of $(X,Y)$ is    
$d_{G}(X,Y):=\frac{e(G[X,Y])}{|X|\,|Y|}$. For $\eps>0$, the pair $(X,Y)$ is {\it $\eps$-regular} in $G$ if for every pair of subsets $X'\subseteq X$ and $Y'\subseteq Y$ with $|X'|\ge\eps|X|$ and $|Y'|\ge\eps|Y|$, we have $|d_{G}(X,Y)-d_{G}(X',Y')|\le \eps$. Additionally, if $d_{G}(X,Y)\ge\gamma$, for some $\gamma>0$, we say that $(X,Y)$ is {\it $(\eps,\gamma)$-regular}. A partition $V(G)=V_1\cup\ldots V_m$ is an \emph{$\eps$-regular partition} of a $k$-edge-coloured graph $G$ if
\begin{enumerate}
	\item for all $ij\in {[m]\choose 2}$, $\big|\,|V_i|-|V_j|\,\big|\le 1$; 
	\item  for each $i\in [m]$, all but at most $\epsilon m$ choices of $j\in [m]$ satisfies that the pair $(V_i,V_j)$ is $\eps$-regular in $G_{\ell}$ for each colour $\ell \in [k]$.
\end{enumerate}

\begin{lemma}[Multicolour Regularity Lemma~\cite{KomlosSimonovits96}]\label{lm:RL}
Suppose $0< 1/M' \ll \epsilon, 1/M \ll 1/k \leq 1$ and $n\geq M$.
Suppose $G$ is an $n$-vertex $k$-edge-coloured graph  and $U_1\cup U_2$ is a partition of $V(G)$. Then there exists an $\eps$-regular partition $V(G)=V_1\cup \ldots\cup V_m$ with $M\le m\le M'$ such that for each $i\in [m]$ we have either $V_i\subseteq U_1$ or $V_i\subseteq U_2$.
\end{lemma}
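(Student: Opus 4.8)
The plan is to run the standard Szemer\'edi energy-increment argument, modified in two ways: track $\eps$-regularity simultaneously across all $k$ colour classes $G_1,\dots,G_k$, and ensure every part of the final partition lies entirely inside $U_1$ or inside $U_2$. First I would attach to every partition $\mathcal{Q}=\{W_1,\dots,W_r\}$ of $V(G)$ its \emph{index}
\[
  q(\mathcal{Q}):=\frac{1}{n^2}\sum_{1\le a<b\le r}|W_a|\,|W_b|\sum_{\ell=1}^{k}d_{G_\ell}(W_a,W_b)^2 ,
\]
which lies in $[0,k/2]$, never decreases when $\mathcal{Q}$ is refined (defect Cauchy--Schwarz), and --- the crucial point --- increases by at least some fixed $c(\eps)>0$ (one may take $c(\eps)=\eps^{5}$, the exact exponent being immaterial) whenever $\mathcal{Q}$ violates clause~(2) in the definition of an $\eps$-regular partition. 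The only genuinely new point for the multicolour setting is immediate: the index is a \emph{sum} over the $k$ colours, so if a pair $(W_a,W_b)$ is irregular in even one colour $G_\ell$, refining $W_a$ and $W_b$ by the corresponding witnessing subsets already yields the full $c(\eps)$ gain; thus one simply picks, for each irregular pair, a single witnessing colour and proceeds exactly as in the one-colour proof.

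Next I would build the partition iteratively, keeping throughout the invariant that the current partition refines $\{U_1,U_2\}$ and is (almost) equitable. The starting partition is obtained by chopping $U_1$ and $U_2$ \emph{separately} into near-equal blocks, arranging to have at least $M$ parts in total; it refines $\{U_1,U_2\}$ by construction. At each stage, if the current partition is not $\eps$-regular I refine it using the witnesses as above --- first intersecting every witnessing set with $U_1$ and with $U_2$, so the refinement still refines $\{U_1,U_2\}$ --- and then re-chop every part into blocks of a common small target size $t:=\lfloor n/m'\rfloor$, again done independently inside $U_1$ and inside $U_2$, with all the leftover vertices swept into a reservoir $R$. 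Choosing the per-step target $m'$ large enough (relative to the current number of parts) keeps $|R|$ below a small fraction of $n$, so the index loss from forming $R$ is less than $\tfrac12 c(\eps)$; since $q$ is bounded by $k/2$ and rises by at least $\tfrac12 c(\eps)$ in each non-regular step, the process terminates after at most $O\!\big(k/c(\eps)\big)$ steps. Each step multiplies the number of parts by at most an exponential, so the final $m$ is bounded by a tower of exponentials of height $O(k/\eps^{5})$, and this is what dictates the choice of $M'$ in the hierarchy $1/M'\ll\eps,1/M\ll 1/k$.

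When the iteration halts I have an almost-equitable partition, all of whose parts sit inside $U_1$ or inside $U_2$, satisfying clause~(2) with parameter, say, $\eps/3$, together with a reservoir of at most $\eps n/3$ vertices. To finish I distribute the reservoir vertices among the existing parts --- those from $R\cap U_1$ to parts inside $U_1$, those from $R\cap U_2$ to parts inside $U_2$ --- ending with $\big||V_i|-|V_j|\big|\le 1$ for all $i,j$, i.e.\ clause~(1). Moving an $\le\eps/3$ fraction of vertices into each part perturbs every pair-density by at most $\eps/3$ in every colour, hence degrades $(\eps/3)$-regularity to $\eps$-regularity, and the resulting partition $V_1\cup\cdots\cup V_m$ meets both clauses with each $V_i\subseteq U_1$ or $V_i\subseteq U_2$. (When one of $U_1,U_2$ is too small to be cut into blocks of size $t$, one instead keeps a bounded exceptional part as in the usual statement; in our applications $|U_1|,|U_2|=\Theta(n)$, so this does not arise.)

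The computational parts --- the defect Cauchy--Schwarz estimate, the precise exponent in $c(\eps)$, and the elementary arithmetic of chopping a set into near-equal blocks --- I would not carry out in detail. The step to be careful with is the combined bookkeeping: simultaneously keeping (a) every part on one side of the bipartition, (b) the partition equitable after each refinement, and (c) the energy-increment inequality intact with room to spare. None of these is difficult alone, but it is easy to let the reservoir grow large enough to erase the index gain, or to make the block-size incompatible with a small side $U_i$, so the hierarchy $1/M'\ll\eps,1/M\ll 1/k$ and the per-step choice of $m'$ must be made with this interplay in mind.
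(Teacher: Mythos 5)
You should note at the outset that the paper gives no proof of Lemma~\ref{lm:RL}: it is quoted from the Koml\'os--Simonovits survey, and your energy-increment scheme is precisely the standard argument behind that citation, with the two appropriate adaptations (summing the index over the $k$ colours, so that a pair irregular in even one colour yields the refinement; and starting from a partition refining $\{U_1,U_2\}$, which every subsequent refinement then automatically refines). Those two adaptations are handled correctly. The trouble lies in the quantitative steps you chose to defer, one of which is a genuine gap rather than a detail.

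Your termination argument claims a fixed index gain $c(\eps)$ ``whenever $\mathcal{Q}$ violates clause~(2)''. But clause~(2) is a per-class condition: its failure only guarantees a single class with more than $\eps r$ irregular partners, i.e.\ only about $\eps r$ irregular pairs out of $\binom{r}{2}$, and refining their witnesses gains only about $\eps r\cdot\eps^4/r^2=\eps^5/r$, which shrinks as the partition grows; hence no bound on the number of iterations follows and the process is not shown to terminate. The increment argument must be run against the pair-counting stopping rule (at least $\eps r^2$ pairs irregular in some colour, which does force a gain of order $\eps^5$), and converting ``all but few irregular pairs'' into the per-class clause~(2) as stated --- which the paper genuinely uses, e.g.\ to assert $\delta(R)\ge(2/3-3\gamma)m$ in the proof of Lemma~\ref{thm-K3K4-stab} --- requires an extra step (for instance, run the iteration with a smaller parameter such as $\eps^3$ and absorb the few classes having many irregular partners into the remaining ones), which your sketch omits. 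Relatedly, your final redistribution of the reservoir is too generous: adding an $\eps/3$-fraction of arbitrary vertices to each part does not merely shift pair densities by $\eps/3$, because inside a witness set of size close to $\eps|V_i|$ the added vertices can form a constant proportion, moving that sub-pair's density by $\Theta(1)$; the contamination per part must be kept to an $O(\eps^2)$-fraction so that a Slicing-Lemma-type estimate applies. Finally, your parenthetical caveat about a very small $U_i$ is not vacuous for this paper, since the lemma is applied to the partition produced by Lemma~\ref{lem-split-2-col}, whose parts may be small; exact equitability together with every part lying inside one side silently needs each nonempty $U_i$ to contain at least about $n/M'$ vertices (or an exceptional set in the statement). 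All of these repairs are standard, but as written the argument neither provably terminates nor delivers clause~(2) in the per-class form stated.
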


Given $\eps,\gamma>0$, a graph $G$, a colouring $\phi:E(G)\rightarrow [k]$ and a  partition $V(G)=V_1\cup\dots\cup V_m$, define the \emph{reduced graph} $R:=R(\eps,\gamma,\phi,(V_i)_{i=1}^m)$ of order $m$ as follows:
$V(R)=[m]$ and $ij\in E(R)$ if $(V_i,V_j)$ is (i) $\eps$-regular with respect to $G_{p}$ for every $p\in[k]$ and (ii) $d(G_q[V_i,V_j])\ge \gamma$ for some $q\in[k]$. For brevity, we may omit $\phi$ or $(V_i)_{i=1}^r$ in the notation when these are clear. It is easy to see that we have
\begin{align}\label{eq: size original reduced}
e(G)\le e(R)\cdot \left(\frac{n}{m}\right)^2 + \frac{n^2}{m} +k\gamma n^2.
\end{align}
Given a graph $R$ and $s\in\mathbb{N}$, let $R(s)$ be the graph obtained by replacing every vertex of $R$ with an independent set of size $s$ and replacing every edge of $R$ with $K_{s,s}$. The following lemmas provide some useful properties related to regular partitions.
\begin{lemma}[Counting Lemma, Theorem 2.1 in~\cite{KomlosSimonovits96}]\label{lem-embedding}
Suppose $0< 1/n  \ll \epsilon \ll \gamma, 1/h \leq 1$.
Suppose that $H$ is an $h$-vertex graph and $R$ is a graph such that 
$H\subseteq R(h)$. If $G$ is a graph obtained by replacing every vertex of $R$ with an independent set of size $n$ and replacing every edge of $R$ with an $(\epsilon,\gamma)$-regular pair, then $G$ contains at least $(\gamma/2)^{e(H)}N^{|V(H)|}$ copies of $H$.
\end{lemma}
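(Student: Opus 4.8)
The plan is to run the standard sequential (greedy) embedding of $H$ into $G$, maintaining candidate sets and invoking $(\epsilon,\gamma)$-regularity at each step; this is essentially the embedding/``Key Lemma'' of Koml\'os and Simonovits, and the assertion is close to Theorem~2.1 of~\cite{KomlosSimonovits96}, so there is no serious obstacle beyond careful bookkeeping.

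First I would unpack the hypothesis $H\subseteq R(h)$: fixing an embedding of $H$ into $R(h)$ and composing with the projection $V(R)\times[h]\to V(R)$ yields a graph homomorphism $\psi\colon V(H)\to V(R)$, so that $uv\in E(H)$ forces $\psi(u)\psi(v)\in E(R)$ (in particular $\psi(u)\neq\psi(v)$). Writing $V_x$ ($x\in V(R)$) for the size-$n$ independent set replacing $x$ in $G$, each pair $(V_{\psi(u)},V_{\psi(v)})$ with $uv\in E(H)$ is then an $(\epsilon,\gamma)$-regular pair in $G$ by hypothesis. I fix an arbitrary ordering $v_1,\dots,v_h$ of $V(H)$ and let $d_i$ be the number of $j<i$ with $v_jv_i\in E(H)$, so $\sum_i d_i=e(H)$ and $d_i\le h-1$; the goal is to count injective maps $v_i\mapsto w_i\in V_{\psi(v_i)}$ realising all edges of $H$, each such tuple $(w_1,\dots,w_h)$ being a (labelled) copy of $H$ in $G$.

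The embedding proceeds in $h$ rounds. Throughout, for each still-unembedded vertex $v_j$ I keep a \emph{candidate set} $C_j\subseteq V_{\psi(v_j)}$ consisting of the vertices adjacent to all already-placed back-neighbours of $v_j$, starting from $C_j=V_{\psi(v_j)}$, and I maintain the invariant $|C_j|\ge(\gamma-\epsilon)^{d_j'}n$, where $d_j'$ counts the back-neighbours of $v_j$ placed so far. Since $d_j'\le h-1$ and $\epsilon\ll\gamma,1/h$, this keeps $|C_j|\ge(\gamma-\epsilon)^{h-1}n\ge\epsilon n$, so $(\epsilon,\gamma)$-regularity is applicable to $C_j$ viewed as a large subset. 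In round $i$, for each forward-neighbour $v_j$ of $v_i$, regularity forces all but at most $\epsilon n$ vertices of $V_{\psi(v_i)}$ to have at least $(\gamma-\epsilon)|C_j|$ neighbours in $C_j$; deleting these at most $h\epsilon n$ ``bad'' vertices together with the at most $h$ previously-used vertices lying in $V_{\psi(v_i)}$ from $C_i$ (which has size $\ge(\gamma-\epsilon)^{d_i}n$ by the invariant) leaves at least $(\gamma-\epsilon)^{d_i}n-h\epsilon n-h\ge\gamma^{d_i}n\bigl(1-3h\gamma^{-h}\max(\epsilon,1/n)\bigr)$ legal choices for $w_i$; picking any of them and replacing $C_j$ by $C_j\cap N(w_i)$ for every forward-neighbour $v_j$ preserves the invariant (the chosen $w_i$ is not bad, so $|C_j|$ shrinks by a factor at least $\gamma-\epsilon$ while $d_j'$ grows by one).

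Finally I would multiply the per-round bounds: using $\sum_i d_i=e(H)$, the number of copies of $H$ is at least $\gamma^{e(H)}n^{h}\bigl(1-3h\gamma^{-h}\max(\epsilon,1/n)\bigr)^{h}\ge\tfrac12\gamma^{e(H)}n^{h}\ge(\gamma/2)^{e(H)}n^{h}$, where the middle step is Bernoulli's inequality together with $1/n\ll\epsilon\ll\gamma,1/h$, and the last is $2^{e(H)-1}\ge1$ (one may assume $e(H)\ge1$, the remaining case being immediate). The only point that really needs attention — and the ``main obstacle'', such as it is — is ensuring that every candidate set stays above the regularity threshold $\epsilon|V_{\psi(v_j)}|=\epsilon n$ for all $h$ rounds simultaneously; this is exactly what dictates the choice of $\epsilon$ much smaller than both $\gamma$ and $1/h$, and everything else is routine.
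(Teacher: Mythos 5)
Your proposal is correct: the paper gives no proof of this lemma, quoting it as Theorem~2.1 of~\cite{KomlosSimonovits96}, and your greedy candidate-set embedding (maintaining $|C_j|\ge(\gamma-\epsilon)^{d_j'}n\ge\epsilon n$ so that regularity can be applied at every round, then multiplying the per-round counts) is exactly the standard proof of that cited Key/Counting Lemma, with the bookkeeping carried out correctly. The only conventions you implicitly adopt — counting labelled copies and reducing to $e(H)\ge 1$ — are harmless and match how the lemma is used in the paper (e.g.\ to produce many copies of $K_4$).
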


\begin{lemma}[Slicing Lemma, Fact 1.5 in~\cite{KomlosSimonovits96}]\label{lem-slicing}
Let $\eps < \alpha,\gamma, 1/2$. Suppose that $(A,B)$ is an $(\eps,\gamma)$-regular pair in a graph $G$. 
If $A'\subseteq A$ and $B'\subseteq B$ satisfies $|A'|\ge\alpha|A|$ and $|B'|\ge\alpha|B|$, then $(A',B')$ is an $(\eps', \gamma-\eps)$-regular pair in $G$, where $\eps':=\max\{\eps/\alpha,2\eps\}$.
\end{lemma}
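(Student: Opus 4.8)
The plan is to read off both conclusions directly from the definition of $\eps$-regularity for the pair $(A,B)$; the proof is short, and the only thing to watch is the bookkeeping of the size thresholds, which is exactly what dictates the form $\eps'=\max\{\eps/\alpha,2\eps\}$.

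First I would establish the density bound. Since $\eps<\alpha$, we have $|A'|\ge\alpha|A|\ge\eps|A|$ and likewise $|B'|\ge\eps|B|$, so $(A',B')$ is an admissible pair for testing $\eps$-regularity of $(A,B)$; hence $|d_{G}(A',B')-d_{G}(A,B)|\le\eps$, and therefore $d_{G}(A',B')\ge d_{G}(A,B)-\eps\ge\gamma-\eps$, which is the claimed density lower bound.

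Next I would verify the regularity condition for $(A',B')$. Let $A''\subseteq A'$ and $B''\subseteq B'$ with $|A''|\ge\eps'|A'|$ and $|B''|\ge\eps'|B'|$. The key observation is that large subsets of $A'$ remain large subsets of $A$: since $\eps'\ge\eps/\alpha$, we get $|A''|\ge(\eps/\alpha)|A'|\ge(\eps/\alpha)\cdot\alpha|A|=\eps|A|$, and similarly $|B''|\ge\eps|B|$. Thus both $(A'',B'')$ and $(A',B')$ are admissible test pairs for $\eps$-regularity of $(A,B)$, which gives $|d_{G}(A'',B'')-d_{G}(A,B)|\le\eps$ and $|d_{G}(A',B')-d_{G}(A,B)|\le\eps$. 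By the triangle inequality,
\[
|d_{G}(A'',B'')-d_{G}(A',B')|\le 2\eps\le\eps',
\]
which is precisely $\eps'$-regularity of $(A',B')$. Together with the density bound, this shows $(A',B')$ is an $(\eps',\gamma-\eps)$-regular pair.

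There is no genuine obstacle here: the factor $\eps/\alpha$ in the definition of $\eps'$ is exactly what is needed to rescale subsets of $A'$ (respectively $B'$) back up to the $\eps|A|$ (respectively $\eps|B|$) threshold governing $(A,B)$, and the factor $2$ simply absorbs the two applications of $\eps$-regularity combined via the triangle inequality. The hypothesis $\eps<1/2$ is only used implicitly to keep $\eps'=\max\{\eps/\alpha,2\eps\}$ below $1$ so that the statement is non-vacuous.
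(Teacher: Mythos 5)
Your proof is correct: the paper itself states this lemma without proof, citing Fact~1.5 of Koml\'os--Simonovits, and your argument is precisely the standard one behind that fact. With the paper's definition of $\eps$-regularity (densities of large subsets compared against $d_G(A,B)$), your rescaling via $\eps/\alpha$ plus the triangle inequality absorbing the factor $2$ verifies both the $\eps'$-regularity and the density bound $\gamma-\eps$ exactly as intended.
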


\begin{lemma}[Claim 7.1 in~\cite{BHS} with $p=2$]\label{lem-drc-j}
For a given function $g_s$ as in~\eqref{eq-defn-sublinear},
suppose $0< 1/n \ll 1/m, \eps\ll\gamma<1$. Suppose that $G$ is an $n$-vertex graph with $\alpha(G)\leq g_s(n)$ and $V_1\cup \dots \cup V_m$ is an $\eps$-regular partition and $R=R(\eps,\gamma)$ is a corresponding reduced graph.
If $K_s\subseteq R$, then we have $K_{2s}\subseteq G$.
\end{lemma}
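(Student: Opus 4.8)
The plan is to build a copy of $K_{2s}$ that uses exactly two vertices in each of the $s$ clusters forming the $K_s$ in $R$. Write $\rho:=g_s(n)$, so $\alpha(G)\le\rho$, relabel the relevant clusters as $W_1,\dots,W_s$ (so every pair $(W_a,W_b)$ is $\eps$-regular with density at least $\gamma$), and set $N:=|W_i|=n/m\pm1$; the crux of the hypothesis is that $N/\rho=e^{\omega(n)(\log n)^{1-1/s}}/m$ tends to infinity, though only sub-polynomially in $n$. Two observations shape the construction: every $X\subseteq W_i$ with $|X|>\rho$ spans an edge (since $\alpha(G)\le\rho$); and the $\binom{2s}{2}-s$ edges of $K_{2s}$ between distinct clusters come essentially for free from the regular pairs, so the real task is to pin down inside each cluster one edge compatible with all the others.

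I would choose edges $e_i=\{a_i,b_i\}\subseteq W_i$ for $i=1,\dots,s$ in turn, keeping shrinking clusters $W_j^{(i)}\subseteq W_j^{(i-1)}$ for $j>i$ (with $W_j^{(0)}=W_j$) so that $a_i,b_i\in N(a_{i'})\cap N(b_{i'})$ for all $i'<i$; then $\bigcup_i e_i$ induces $K_{2s}$, since $e_j\subseteq W_j^{(j-1)}\subseteq N(a_i)\cap N(b_i)$ whenever $i<j$ and each $e_i$ is an edge. The engine for step $i$ is dependent random choice, applied one later cluster at a time: it lets one pass from $W_i^{(i-1)}$ to a subset $U_i$ with $|U_i|>\rho$, all of whose pairs have common neighbourhood of size at least a threshold $\theta_i$ in every $W_j^{(i-1)}$ with $j>i$. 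One then takes any edge $e_i\subseteq U_i$ and sets $W_j^{(i)}:=W_j^{(i-1)}\cap N(a_i)\cap N(b_i)$, so $|W_j^{(i)}|\ge\theta_i$; after step $s-1$, $W_s^{(s-1)}$ still has more than $\rho$ vertices and hence spans the final edge $e_s$.

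The arithmetic is where $g_s$ enters. One takes the dependent-random-choice parameter $t$ of order $(\log n)^{1-1/s}$. For the random common neighbourhood at each substep to have expected size above $2\rho$ (leaving room to delete bad pairs), one needs roughly $N\gamma^{O_s(t)}>2\rho$, i.e.\ $t<\omega(n)(\log n)^{1-1/s}/O_s(\log(1/\gamma))$, which holds because $\omega(n)\to\infty$. Meanwhile the standard bad-pair count forces $\theta_i$ down to about $\gamma^{O_s(1)}|W_i^{(i-1)}|\,N^{-O(1/t)}$, so each of the $s-1$ steps shrinks the clusters by a factor $N^{O(1/t)}=e^{O_s((\log n)^{1/s})}$, for a total loss of $e^{O_s((\log n)^{1/s})}$; this is absorbed precisely because $N/\rho=e^{\omega(n)(\log n)^{1-1/s}}$ with $\omega(n)\to\infty$ and $(\log n)^{1/s}\le(\log n)^{1-1/s}$ for $s\ge2$.

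I expect the real obstacle to be the following. Every application of dependent random choice needs a lower bound on the density between the (shrinking) set currently being refined and the target cluster; but after the first step these sets are far smaller than $\eps N$, so the Slicing Lemma (Lemma~\ref{lem-slicing}) no longer transfers regularity — or even positive density — from the original $\eps$-regular pairs. To handle this I would add to the iterative invariant a lower bound $d_G(W_i^{(k)},W_j^{(k)})\ge\gamma_k$, and, when selecting $e_k$ inside $U_k$, use an averaging argument over the $\ge(\gamma-\eps)N^2$ edges of each original pair $G[W_a,W_b]$ to show that $e_k$ may be chosen so that $\gamma_k$ degrades by only a bounded factor per step — enough to keep $\gamma_k$ above the (slowly decaying, $\omega(n)$-dependent) quantity that the dependent-random-choice estimates actually require. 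Reconciling this averaging step with the bad-pair deletion inside $U_k$, and carrying it through all $\binom{s}{2}$ pairs at once, is the delicate point; this is, in essence, Claim~7.1 of~\cite{BHS} with $p=2$.
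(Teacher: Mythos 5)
Your proposal has the right skeleton (iterated dependent random choice, one edge per cluster, with the arithmetic $t\asymp\omega(n)(\log n)^{1-1/s}/\log(1/\gamma)$ and a per-step loss of $N^{O(1/t)}=e^{O_s((\log n)^{1/s})}$, which is indeed absorbed by $e^{\omega(n)(\log n)^{1-1/s}}$), but it is not a proof: the step you yourself flag as ``the delicate point'' is exactly the content of the lemma, and your sketch of how to handle it does not work as stated. From the second round onwards, dependent random choice needs a degree/density hypothesis from the current set $W_k^{(k-1)}$ into the current targets $W_j^{(k-1)}$; these sets have size $Ne^{-\Theta_s((\log n)^{1/s}/\omega)}\ll\eps N$ (and the lemma must hold for arbitrarily slowly growing $\omega$), so neither $\eps$-regularity nor Lemma~\ref{lem-slicing} transfers any density to them. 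Your proposed repair --- averaging over the $\ge(\gamma-\eps)N^2$ edges of the original pair $G[W_a,W_b]$ --- gives no information about the bipartite graph between $W_a^{(k)}=W_a^{(k-1)}\cap N(e_k)$ and $W_b^{(k)}=W_b^{(k-1)}\cap N(e_k)$: almost all edges of the original pair lie outside $W_a^{(k-1)}\times W_b^{(k-1)}$, and the common neighbourhood of a chosen pair can sit on a sparse corner of it. To make an averaging argument over the candidate edges $e_k\subseteq U_k$ you would need lower bounds on codegrees between $U_k$ and the already-shrunken sets, i.e.\ precisely the density invariant you are trying to maintain; so as written the argument is circular. (A min-degree version of the invariant has the same problem: it is not preserved when you intersect with $N(a_k)\cap N(b_k)$.) Also note that choosing the random sets from the original clusters $W_j$ instead does not help, since then the bad-pair probability bounds control $|N(e_k)\cap W_j|$ rather than $|N(e_k)\cap W_j^{(k-1)}|$.

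For comparison: the paper does not prove this statement at all --- it is quoted verbatim as Claim~7.1 of~\cite{BHS} (with $p=2$), and all the work of propagating usable degree information through the $s-1$ rounds of dependent random choice is done there. So your write-up, which ends by deferring exactly that step to ``in essence, Claim~7.1 of~\cite{BHS}'', reduces the lemma to itself rather than establishing it. To turn it into a genuine proof you would need either to carry out a strengthened iteration (e.g.\ an invariant controlling, for all pairs surviving in $U_k$, not just the size but the degree distribution of their common neighbourhoods into every later cluster), or to follow the argument of~\cite{BHS} directly.
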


The following lemma will be useful to guarantee a certain minimum degree condition in a dense graph.

\begin{lemma}\label{lem: min degree}
Suppose $0<1/n \ll \epsilon \ll d \leq 1$.
Suppose that $G$ is an $n$-vertex graph with $e(G)\geq (d+\epsilon) n^2/2$. Then $G$ contains an $n'$-vertex subgraph $G'$ with $n'\geq \epsilon^{1/2} n/2$ such that $e(G') \geq (d n'^2 + \epsilon n^2-d(n-n'))/2$ and $\delta(G')\geq d n'$.
\end{lemma}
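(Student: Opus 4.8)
The plan is a routine iterated vertex-deletion argument; the only point requiring care is the bookkeeping of the edge count, so that the lower-order term $-d(n-n')$ in the conclusion comes out exactly.

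I would run the following process. Set $G^{(0)}:=G$, which has $n$ vertices. Having defined $G^{(i)}$ on $n-i$ vertices, if there is a vertex $v$ with $d_{G^{(i)}}(v)<d(n-i)$, delete one such vertex to obtain $G^{(i+1)}$; otherwise halt and output $G':=G^{(i)}$ and $n':=n-i$. By the stopping rule every vertex of $G'$ has degree at least $dn'$ in $G'$, so $\delta(G')\ge dn'$ as required. For the edge count, deleting a vertex at step $i$ destroys fewer than $d(n-i)$ edges, so after $k:=n-n'$ deletions
$$e(G')\ \ge\ e(G)-\sum_{i=0}^{k-1}d(n-i)\ =\ e(G)-d\sum_{j=n'+1}^{n}j\ =\ e(G)-\frac{d}{2}\bigl((n^2-n'^2)+(n-n')\bigr),$$
and since $e(G)\ge(d+\epsilon)n^2/2$ this is precisely the claimed inequality $e(G')\ge\bigl(dn'^2+\epsilon n^2-d(n-n')\bigr)/2$.

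It remains to check $n'\ge\epsilon^{1/2}n/2$. Suppose not. On the one hand $e(G')\le\binom{n'}{2}<\epsilon n^2/8$; on the other hand the displayed bound together with $d\le1$ gives $e(G')\ge(\epsilon n^2-d(n-n'))/2\ge(\epsilon n^2-n)/2$, which exceeds $\epsilon n^2/8$ as soon as $n>4/(3\epsilon)$, and this holds because $1/n\ll\epsilon$. (If $n'=0$ the same bound already forces $e(G')>0$, which is absurd.) This contradiction gives $n'\ge\epsilon^{1/2}n/2$, and in particular the process halted because no vertex of low degree remained, so the output satisfies all three conclusions. I do not expect a genuine obstacle here: the only subtlety is observing that $\sum_{j=n'+1}^{n}j=\frac12\bigl((n^2-n'^2)+(n-n')\bigr)$, so that summing the degrees of the deleted vertices yields exactly the correction $-d(n-n')$ in the statement rather than just $\frac{d}{2}(n^2-n'^2)$.
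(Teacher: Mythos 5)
Your proposal is correct and follows essentially the same argument as the paper: iteratively delete a vertex whose degree falls below $d$ times the current order, track the at most $d\sum_{j=n'+1}^{n}j$ edges lost to get exactly the stated edge bound, and then compare with $e(G')\le n'^2/2$ to force $n'\ge \epsilon^{1/2}n/2$. The only difference is cosmetic (you phrase the size bound as a contradiction, the paper reads it off directly), so there is nothing to fix.
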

\begin{proof}
Obtain a sequence of graphs $G_n:=G, G_{n-1},\ldots$ as follows. For each $i\le n$, if there exists a vertex $v_i\in V(G_i)$ with $d_{G_i}(v_i)\le di$, then set $G_{i-1}:=G_i\setminus\{v_i\}$. Let $G_{n'}$ be the final graph. Then we have
	$$ n'^2\ge 2e(G_{n'})> (d+\epsilon)n^2- \sum_{i = n'+1}^{n} 2d i 
	=  \epsilon n^2 + dn'^2 - dn + dn'.$$
This implies $n'\geq \epsilon^{1/2}n/2$, thus  proving the lemma.
\end{proof}

\section{Proof of Theorem~\ref{thm-RTK8}}\label{sec-K8}
We need first the following variation of dependent random choice lemma. For more on the dependent random choice method, we refer the readers to a survey of Fox and Sudakov~\cite{FS}.
\begin{lemma}\label{lem-drc-v}
	Let $0<\gamma<1$ and $G$ be a 3-partite graph with vertex partition $Z_1\cup Z_2\cup Z_3$ such that $|Z_i|=n$ for each $i\in[3]$. 
If $d(v,Z_i)\ge \gamma n$ holds for all $v\in Z_1$ and $i\in\{2,3\}$, then there exists a set $S\subseteq Z_1$ of size $\frac{1}{2}n^{2/3}$ such that every pair of vertices $P\in \binom{S}{2}$ satisfies $|N(P,Z_i)|\ge \gamma^9n$ for each $i\in\{2,3\}$. 
\end{lemma}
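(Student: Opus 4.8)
The plan is the standard dependent random choice argument, but chasing through pairs (codegree) rather than single-vertex degrees, and done separately for $i=2$ and $i=3$. First I would pick a uniformly random vertex $u\in Z_2$ and let $A:=N(u,Z_1)$. By the degree hypothesis $d(v,Z_2)\ge\gamma n$ for all $v\in Z_1$, double counting edges between $Z_1$ and $Z_2$ gives $\ex|A|\ge\gamma n$. The point of this step is that $A$ is a ``random'' neighbourhood, so most pairs inside $A$ will have large common neighbourhood in $Z_2$: call a pair $P\in\binom{Z_1}{2}$ \emph{$2$-bad} if $|N(P,Z_2)|<\gamma^9 n$; for such a pair, the probability both endpoints land in $A$ is $\big(|N(P,Z_2)|/n\big)$ wait — more precisely $\pr[P\subseteq A]=|N(P,Z_2)|/n<\gamma^9$. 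So if $X$ counts $2$-bad pairs inside $A$, then $\ex X<\gamma^9\binom{n}{2}<\gamma^9 n^2/2$.

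Next I would run the same argument with $Z_3$ in place of $Z_2$, using the \emph{same} random $u$? That does not quite work since $u$ is chosen in $Z_2$. Instead I would iterate: having fixed the outcome, consider the random variable $|A|-$ (penalty for bad pairs). Concretely, let $Y_2$ (resp.\ $Y_3$) be the number of $2$-bad (resp.\ $3$-bad) pairs in $A$. For the $3$-bad count I pick a second independent random vertex $w\in Z_3$ and set $B:=N(w,Z_1)$, with $\ex|B|\ge\gamma n$ and $\ex(\#\,3\text{-bad pairs in }B)<\gamma^9 n^2/2$. The issue is that $A$ and $B$ are different sets. To merge them, the cleanest route is to choose $u\in Z_2$ and $w\in Z_3$ independently and set $A:=N(u,Z_1)\cap N(w,Z_1)$; then $\ex|A|=\sum_{v\in Z_1}\pr[v\in A]=\sum_{v\in Z_1}\frac{d(v,Z_2)}{n}\cdot\frac{d(v,Z_3)}{n}\ge\gamma^2 n$, and for any $i$-bad pair $P$, $\pr[P\subseteq A]=\frac{|N(P,Z_2)|}{n}\cdot\frac{|N(P,Z_3)|}{n}\le\gamma^9\cdot 1$ if $P$ is $i$-bad for either $i$ (using the trivial bound $\le 1$ on the other factor). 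Hence $\ex(\#\text{ bad pairs in }A)<2\cdot\gamma^9\binom{n}{2}<\gamma^9 n^2$.

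Now I would use convexity / a moment bound. Since the map $x\mapsto x^3$ is convex, Jensen gives $\ex|A|^3\ge(\ex|A|)^3\ge\gamma^6 n^3$; but I actually want a lower bound on $|A|$ together with an upper bound on bad pairs simultaneously, so instead I consider $\ex\big(|A|^3-C\cdot(\#\text{bad pairs in }A)\big)$ hmm — the exponent $3$ is a hint that one should look at $\ex|A|^3$ and note $\ex|A|^3\ge\gamma^6 n^3$, while if $|A|$ is large then deleting one endpoint from each bad pair destroys at most $\#\text{bad pairs in }A$ vertices. Pick an outcome with $|A|\ge \tfrac12 n^{2/3}$... this is where the exponents have to be balanced, and it is the only genuinely delicate point. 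Let me set it up as: consider $Z:=\ex\big(c_1|A|-c_2\,\gamma^{-9}n^{-1}\cdot(\#\text{bad pairs in }A)\big)$ for suitable constants; by the two expectation bounds $Z>0$, so there is an outcome of $(u,w)$ with $c_1|A|> c_2\gamma^{-9}n^{-1}(\#\text{bad pairs in }A)$ and also $|A|$ not too small — to get the clean statement one fixes the outcome maximising $|A|^3 - 2n(\#\text{bad in }A)$ say, checks this is $\ge \tfrac12(\gamma^6 n^3)$ by linearity, concludes $|A|\ge \tfrac12 n^{2/3}$ (possibly after shrinking $\gamma$ into the constant, or absorbing it since $\gamma$ is a fixed constant and $n$ is large so $\gamma^6 n^3\gg n^{2}$), and then deletes one vertex from each remaining bad pair, of which there are $<n\cdot|A|\le$ fewer than half of $|A|$ when $|A|\ge \tfrac12 n^{2/3}$ is large enough...

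The main obstacle, then, is purely bookkeeping: matching the exponent $2/3$ in the target size $\tfrac12 n^{2/3}$ against the exponent $9$ in $\gamma^9$ and the quadratic count of pairs, and making sure that after discarding one endpoint of each surviving bad pair the set $S$ still has size at least $\tfrac12 n^{2/3}$, with $n$ large (so all lower-order and $\gamma$-dependent terms are absorbed). Once the moment computation $\ex|A|\ge\gamma^2 n$ and $\ex(\#\text{bad pairs in }A)<2\gamma^9\binom n2$ are in hand, a single averaging step fixing $(u,w)$, followed by the deletion of at most one vertex per bad pair, yields the required $S\subseteq Z_1$ with $|S|=\tfrac12 n^{2/3}$ and $|N(P,Z_i)|\ge\gamma^9 n$ for every $P\in\binom S2$ and $i\in\{2,3\}$.
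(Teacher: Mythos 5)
Your setup with a single random probe in each of $Z_2$ and $Z_3$ cannot work, and the point where your write-up goes vague is exactly where it breaks. With $u\in Z_2$, $w\in Z_3$ uniform and $A:=N(u,Z_1)\cap N(w,Z_1)$, a bad pair survives into $A$ with probability that is a constant independent of $n$ (your bound $\le\gamma^9$, using the trivial factor $1$ on the other side), so the expected number of bad pairs inside $A$ is of order $\gamma^9 n^2$, while $|A|\le n$. Since you must delete at least one vertex per surviving bad pair, an excess of $\Theta_\gamma(n^2)$ bad pairs cannot be cleaned out of a set of at most $n$ vertices, and no averaging over outcomes of $(u,w)$ changes this order of magnitude. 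The cubic-moment patch does not rescue it: from $\ex\bigl[|A|^3-2nX\bigr]\ge\tfrac12\gamma^6 n^3$ (with $X$ the number of bad pairs in $A$) you only get an outcome with $X\le |A|^3/(2n)$, and since $|A|\le n$ (indeed $|A|\asymp\gamma^2 n$ in the relevant outcomes) this bound is still quadratic in $n$ and vastly larger than $|A|$; your claim that the surviving bad pairs number ``fewer than half of $|A|$'' does not follow from anything you have established, and $|A|^3$ is not a set size, so there is no deletion step that preserves it.

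The missing idea --- and the paper's actual argument --- is to take \emph{many} probes: $q=\frac{\log n}{6\log(1/\gamma)}$ vertices chosen uniformly with repetition from each of $Z_2$ and $Z_3$, forming $Q_2,Q_3$, and $S':=N(Q_2\cup Q_3,Z_1)$. A bad pair then survives with probability at most $\gamma^{9q}=n^{-3/2}$ (the paper even writes $\gamma^{18q}$, but $\gamma^{9q}$ suffices), so the expected number of surviving bad pairs is at most $n^{1/2}$, while $\ex|S'|\ge n\gamma^{2q}=n^{2/3}$ by the degree hypothesis and Jensen-type multiplication over the $2q$ probes. Linearity of expectation applied to $|S'|-X$ gives an outcome with $|S'|-X\ge\tfrac12 n^{2/3}$, and deleting one endpoint of each bad pair yields $S$. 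The exponent $2/3$ in the statement is exactly the outcome of balancing $n\gamma^{2q}$ against $n^2\gamma^{9q}$ via this choice of $q$; any version with a bounded number of probes, where the bad-pair survival probability does not decay with $n$, is structurally unable to reach the conclusion.
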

\begin{proof}
	Set $q:=\frac{\log n}{6\log(1/\gamma)}$. For each $i\in\{2,3\}$, pick $q$ vertices in $Z_i$ uniformly at random with repetition and denote by $Q_i$ the set of chosen vertices. We call a pair $P\in{Z_1\choose 2}$ \emph{bad} if there exists $i\in\{2,3\}$ such that $|N(P,Z_i)|<\gamma^9n$. Define $S':=N(Q_2\cup Q_3, Z_1)$, and define a random variable $X$ as the number of bad pairs in $S'$. Note that for each bad pair $P\in{Z_1\choose 2}$, we have
	$$\pr[P\subseteq S']=\pr[Q_2\cup Q_3\subseteq N(P)]=\prod_{i=2}^{3}\left(\frac{|N(P,Z_i)|}{|Z_i|}\right)^q\le \gamma^{18q}.$$
	Thus, by the linearity of expectation, we have that $\ex[X]\le{|Z_1|\choose 2}\cdot \gamma^{18q}\le n^2\gamma^{18q}$. On the other hand,
	$$\ex|S'|=\sum_{v\in Z_1}\pr[v\in S']=\sum_{v\in Z_1}\pr[Q_2\cup Q_3\subseteq N(v)]=\sum_{v\in Z_1}\prod_{i=2}^{3}\left(\frac{d(v,Z_i)}{|Z_i|}\right)^q\ge n\gamma^{2q}.$$
	So, there exists choices of $Q_2$ and $Q_3$ such that $\ex[|S'|-X]\ge n\gamma^{2q}-n^2\gamma^{18q}\ge \frac{1}{2}n^{2/3}$. Then the set $S$ obtained from deleting one vertex from every bad pair in $S'$ has the desired properties.
\end{proof}

\begin{proof}[Proof of Theorem~\ref{thm-RTK8}]
Fix constants $\delta, M', \epsilon$ as follows:
$$0<1/n_0\ll \delta<1/M' \ll\eps\ll \gamma\ll 1.$$
Assume that $G$ is an $n$-vertex graph with $n\geq n_0$ and $\alpha(G)\leq \delta \sqrt{n\log{n}}$.
Apply the regularity lemma (Lemma~\ref{lm:RL} with $k=1$) with $G, V(G),\emptyset, \eps, \eps^{-1}, 1$ and $M'$ playing the roles of $G,U_1,U_2, \eps,M, k$ and $M'$, respectively to obtain a regularity partition $V_1\cup\dots\cup V_m$ and the reduced graph $R=R(\eps,\gamma/2)$ of order $m$ with $\eps^{-1}\leq m \leq M'$. Note that $R$ is $K_4$-free by Lemma~\ref{lem-drc-j}. We say that a triangle $ijk$ in $R$ is \emph{chubby} if $d_{G}(V_{i'},V_{j'}) \geq 2/3+\gamma$ for some $i'j' \in {\{i,j,k\}\choose 2}$.
We first show that there is no chubby triangle in $R$.
\begin{claim}\label{cl-light-tri}
	No triangle in $R$ is chubby.
\end{claim}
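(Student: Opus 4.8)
The plan is to argue by contradiction. Suppose some triangle $ijk$ of $R$ is chubby, say $d_{G}(V_i,V_j)\ge 2/3+\gamma$, and let us produce a copy of $K_8$ in $G$, contradicting that $G$ is $K_8$-free. Write $N$ for the common cluster size (up to $\pm 1$), so $N\ge n/(2M')$, and recall that all three pairs $(V_i,V_j),(V_j,V_k),(V_i,V_k)$ are $\eps$-regular of density at least $\gamma/2$, while $(V_i,V_j)$ has density at least $2/3+\gamma$. Two facts will be used repeatedly: first, $\tfrac12 N^{2/3}\gg\sqrt{n\log n}\ge \alpha(G)$ for $n$ large, because $2/3>1/2$; second, Shearer's bound (Theorem~\ref{thm: Shearer}) gives $R(3,\alpha(G)+1)=O(\delta^{2}n)$, so every set of at least $c\delta^{2}n$ vertices contains a triangle (its independence number being at most $\alpha(G)$), and $\delta$ is tiny compared to $\gamma$ and $M'$. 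The desired $K_8$ will be split as $3+3+2$: a triangle in $V_i$, a triangle in $V_j$, an edge in $V_k$, all pairwise completely joined.

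First I would produce the edge inside $V_k$. By regularity, at least $(1-2\eps)N$ vertices of $V_k$ have at least $(\gamma/4)N$ neighbours in each of $V_i$ and $V_j$; call this set $Y_k$, and trim $Y_k,V_i,V_j$ to subsets $Z_1\subseteq Y_k$, $Z_2\subseteq V_i$, $Z_3\subseteq V_j$ of a common size $N_0:=(1-2\eps)N$. Then every vertex of $Z_1$ has at least $(\gamma/8)N_0$ neighbours in each of $Z_2,Z_3$, so Lemma~\ref{lem-drc-v} (applied with parameter $\gamma/8$) yields $S\subseteq Z_1$ with $|S|=\tfrac12 N_0^{2/3}$ such that every pair $P\in\binom{S}{2}$ has $|N(P,Z_2)|,|N(P,Z_3)|\ge \gamma_1 N_0$, where $\gamma_1:=(\gamma/8)^9$. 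Since $|S|>\alpha(G)$, the set $S\subseteq V_k$ contains an edge $c_1c_2$; put $W_i:=N(\{c_1,c_2\},Z_2)\subseteq V_i$ and $W_j:=N(\{c_1,c_2\},Z_3)\subseteq V_j$, both of size at least $\gamma_1 N_0$.

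Next I would find the two triangles inside $W_i$ and $W_j$. As $|W_i|,|W_j|\ge \gamma_1 N_0\ge(\gamma_1/2)N$, the Slicing Lemma (Lemma~\ref{lem-slicing}) shows that $(W_i,W_j)$ is $(\eps',2/3+\gamma/2)$-regular with $\eps'=2\eps/\gamma_1\ll\gamma$. Let $W_i'$ consist of those $a\in W_i$ with $d(a,W_j)\ge (2/3+\gamma/4)|W_j|$; by regularity $|W_i'|\ge(1-\eps')|W_i|$, still of linear size, so $G[W_i']$ contains a triangle $T_i=\{a_1,a_2,a_3\}$. Since each $a_\ell$ has at least $(2/3+\gamma/4)|W_j|$ neighbours in $W_j$, we get $|N(T_i,W_j)|\ge 3(2/3+\gamma/4)|W_j|-2|W_j|=\tfrac{3\gamma}{4}|W_j|$, again of linear size, so $G[N(T_i,W_j)]$ contains a triangle $T_j=\{b_1,b_2,b_3\}$. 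Now $\{a_1,a_2,a_3,b_1,b_2,b_3,c_1,c_2\}$ spans a $K_8$: the $a$'s form a triangle, the $b$'s form a triangle, and $c_1c_2$ is an edge; every $a_\ell$ and every $b_\ell$ lies in $N(c_1)\cap N(c_2)$ because $W_i,W_j\subseteq N(\{c_1,c_2\})$; and every $b_\ell\in N(T_i)$ by the choice of $T_j$. All $\binom{8}{2}$ pairs are thus edges, contradicting $K_8$-freeness, which proves the claim.

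The main conceptual point is that $(V_i,V_k)$ and $(V_j,V_k)$ are only known to have density $\ge\gamma/2$, so one cannot directly control the common neighbourhood in $V_i$ or $V_j$ of three vertices of $V_k$ — this is precisely why one first passes, via dependent random choice (Lemma~\ref{lem-drc-v}), to the set $S$ in which all pairs have large common neighbourhoods, and why the split on the $V_k$ side is $2$ rather than $3$. The remaining, more technical, obstacle is the size bookkeeping: one must verify $\tfrac12 N_0^{2/3}\gg\alpha(G)$ (so that $S$ contains an edge — here the exponent $2/3>1/2$ is what beats $\sqrt{n\log n}$) and that each of $W_i'$ and $N(T_i,W_j)$ has size exceeding $R(3,\alpha(G)+1)=O(\delta^2 n)$ (so that Theorem~\ref{thm: Shearer} supplies the triangles), both of which reduce to inequalities of the form $\delta^{2}\cdot\mathrm{poly}(M')<\mathrm{poly}(\gamma)$, guaranteed by the hierarchy $1/n_0\ll\delta<1/M'\ll\eps\ll\gamma$.
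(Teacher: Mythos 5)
Your proof is correct and follows essentially the same route as the paper: dependent random choice (Lemma~\ref{lem-drc-v}) applied with the cluster outside the dense pair as $Z_1$ to find an edge whose endpoints have large common neighbourhoods in both sides of the dense pair, then the Slicing Lemma plus two applications of Shearer's bound (exploiting density $>2/3$ so that a triangle's common neighbourhood stays large) to build the $K_8$ as $2+3+3$. The only differences are cosmetic, e.g.\ restricting to high-degree vertices rather than deleting low-degree ones and phrasing the triangle-finding via $R(3,\alpha(G)+1)=O(\delta^2 n)$ instead of the explicit independence-number calculation.
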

\begin{proof}
	Suppose without loss of generality that $\{1,2,3\}$ induces a triangle in $R$ with $d(V_2,V_3)\ge 2/3+\gamma$. By the definition of regular pair, it is well-known that for each $i\in [3]$, there exists a subset $V_i^*\subseteq V_i$ such that $|V_i^*|=(1-2\eps)|V_i|$ and $\mincr(G[V_1^*,V_2^*,V_3^*])\ge \gamma|V_i^*|/3$. Applying Lemma~\ref{lem-drc-v} to $G[V_1^*,V_2^*,V_3^*]$ with $V_i^*$s playing the roles of $Z_i$s, we obtain a set $S\subseteq V_1^*$ of size at least $\frac{1}{3}\left(\frac{n}{m}\right)^{2/3}$ such that every pair $P\in \binom{S}{2}$ satisfies $|N(P,V_i^*)|\geq (\frac{\gamma}{3})^9n/m$ for each $i\in \{2,3\}$. As $\frac{1}{3}\left(\frac{n}{m}\right)^{2/3} \geq \alpha(G)$, the set $S$ contains an edge $uv\in E(G)$ with $|N(uv,V_i^*)|\geq (\frac{\gamma}{3})^9n/m$ for each $i\in \{2,3\}$.
 Using Lemma~\ref{lem-slicing} and again deleting low degree vertices, we get $V_i'\subseteq N(uv,V_i^*)$ for $i\in\{2,3\}$ such that $|V_2'|=|V_3'|\ge \gamma^{10}n/m$; $(V_2',V_3')$ is $(\sqrt{\eps}, \frac{2}{3}+\frac{\gamma}{4})$-regular with $\delta(G[V_2',V_3'])\ge (\frac{2}{3}+\frac{\gamma}{5})|V_3'|$. Observe that $V_2'$ must contain a triangle, as otherwise Theorem~\ref{thm: Shearer} implies that there exists an independent set of size at least
	\begin{equation}\label{eq-R3k}
		\frac{1}{2}\sqrt{|V_2'|\log|V_2'|}\ge\frac{1}{2}\sqrt{\gamma^{10}\frac{n}{m}\log\left(\gamma^{10}\frac{n}{m}\right)}> \frac{1}{m}\sqrt{n\log n}>\delta\sqrt{n\log n}\ge \alpha(G),
	\end{equation}
	a contradiction. Let $T$ be a triangle in $V_2'$. Then we have that 
	$$|N(T,V_3')|\ge 3\delta(G[V_2',V_3'])-2|V_3'|\ge \frac{\gamma}{2}|V_3'|\ge \gamma^{12} \frac{n}{m}.$$
	Almost identical calculation as~\eqref{eq-R3k} shows that $N(T,V_3')$ contains a triangle, which together with $u$, $v$ and $T$ forms a copy of $K_8$, a contradiction.
\end{proof}
We are now ready to prove Theorem~\ref{thm-RTK8}. 
Let $t\in \mathbb{R}$ be such that $e(R)=m^2/4+t$. If $t<0$, then \eqref{eq: size original reduced} with the definition of $R=R(\epsilon,\gamma/2)$ implies that 
$$e(G)\leq (m^2/4) (n/m)^2+ n^2/m + \gamma n^2/2 \leq n^2/4+ \gamma n^2$$ 
as $1/m\ll \gamma$. We may thus assume $t\geq 0$.

Recall that $R$ is $K_4$-free, Tur\'an's theorem implies that $e(R)\le m^2/3$ and $t/m^2\le 1/12$; and by Lemma~\ref{thm-disj-k3}, $E(R)$ can be decomposed into $m^2/4-2t$ edges and $t$ edge-disjoint triangles. Each triangle in $R$, by Claim~\ref{cl-light-tri}, corresponds to at most 
$$3\cdot (2/3+\gamma)\cdot (n/m)^2=(2+3\gamma)(n/m)^2$$
edges in $G$. Hence
$$e(G)\le \left(\frac{m^2}{4}-2t+(2+3\gamma)t\right)\frac{n^2}{m^2}+ \frac{n^2}{m}+ \frac{\gamma}{2} n^2\le \frac{n^2}{4}+\gamma n^2,$$
as desired.
\end{proof}
\section{Lower bound constructions for $\varrho(K_3,K_s,\delta)$}\label{sec-lowerbound-2col}
For each $s\in\{3,4,5\}$ and small $\delta >0$, we will construct an $n$-vertex $(K_3,K_s)$-free graph $G$ with $\alpha(G)\le \delta n$ and the desired edge-density. This provides a lower bound on $\varrho(K_3,K_{s},\delta)$.
Throughout this section, we use $X_1,\ldots,X_k$ for the partite sets of $T_k(n)$.

\subsection{Lower bound for $\varrho(K_3,K_s,\delta)$ when $s\in\{3,5\}$}
If $s=3$, let $G$ be a graph obtained from putting a copy of $F(\frac{n}{2},d)$, for some $d\in[\delta n-o(n),\delta n]$, in both partite sets of $T_2(n)$. It is easy to see that $\alpha(G)\le \delta n$  and $e(G)=\frac{n^2}{4}+\frac{\delta n^2}{2} + o(n^2)$. It is also easy to check that the following edge-colouring $\phi$ is a $(K_3,K_3)$-free colouring: $\phi(e)=1$ for all $e\in \cup_{i\in[2]}G[X_i]$; and $\phi(X_1,X_2)=2$, see Figure ~\ref{fig-lower}.

If $s=5$, let $G$ be a graph obtained from putting a copy of $F(\frac{n}{5},d)$, for some $d\in[\delta n-o(n),\delta n]$, in each partite set of $T_5(n)$. It is easy to see that $\alpha(G)\le \delta n$  and $e(G)=\frac{2n^2}{5}+\frac{\delta n^2}{2}+ o(n^2)$. It is also easy to check that the following edge-colouring $\phi$ is a $(K_3,K_5)$-free colouring: $\phi(e)=2$ for all $e\in \cup_{i\in[5]}G[X_i]$; $\phi(X_i,X_{i+1})=2$ for all $i\in[5]$ (addition modulo $5$); and all other edges are of colour 1, see Figure ~\ref{fig-lower}.

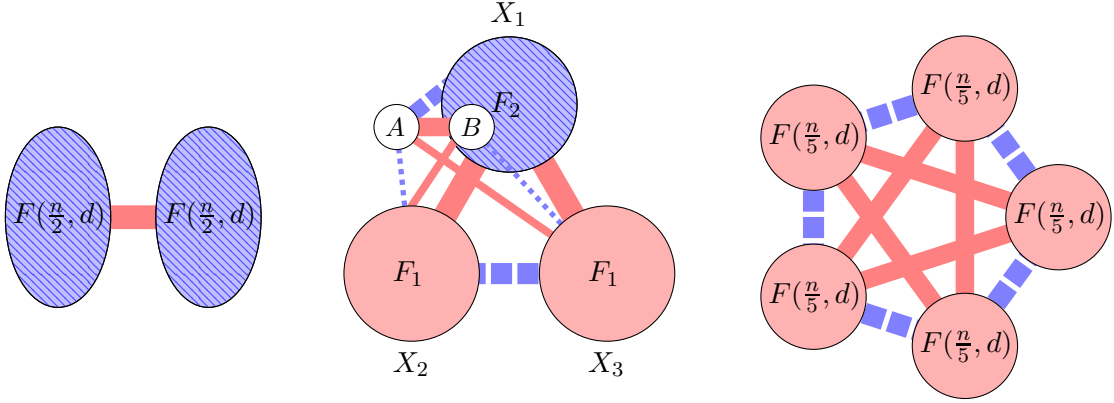
\begin{figure}
	\centering
	\begin{tikzpicture}
	
	\tikzset{be/.style = {line width=8pt, color=blue!50!white, dotted}}
	\tikzset{re/.style = {line width=9pt, color=red!50!white}}
	\tikzset{b2e/.style = {line width=6pt, color=blue!50!white, dotted}}
	\tikzset{r2e/.style = {line width=7pt, color=red!50!white}}
	\tikzset{b3e/.style = {line width=2pt, color=blue!50!white, dotted}}
	\tikzset{r3e/.style = {line width=3pt, color=red!50!white}}	

	\draw[re] (-3.3-2,0) to (-2.7-2,0);
	
				\draw[fill=blue!25]  (-4-2,0) ellipse (0.7cm and 1.2cm);
	\draw[pattern=north west lines, pattern color=blue!70]  (-4-2,0) ellipse (0.7cm and 1.2cm);
	\node [rectangle, fill=none, draw=none, text width = 1cm] at (-4-2-0.1,0) {$F(\frac{n}{2},d)$};
	
		\draw[fill=blue!25]  (-2-2,0) ellipse (0.7cm and 1.2cm);
	\draw[pattern=north west lines, pattern color=blue!70]  (-2-2,0) ellipse (0.7cm and 1.2cm);
	\node [rectangle, fill=none, draw=none, text width = 1cm] at (-2-2-0.1,0) {$F(\frac{n}{2},d)$};

		\draw[be] (1*1.8+5.5,0) to (0.309*1.8+5.5,0.951*1.8);
	\draw[be] (0.309*1.8+5.5,0.951*1.8) to (-0.8090*1.8+5.5,0.5878*1.8);
	\draw[be] (-0.8090*1.8+5.5,0.5878*1.8) to (-0.8090*1.8+5.5,-0.5878*1.8);
	\draw[be] (-0.8090*1.8+5.5,-0.5878*1.8) to (0.309*1.8+5.5,-0.951*1.8);
	\draw[be] (0.309*1.8+5.5,-0.951*1.8) to (1*1.8+5.5,0);
	
	\draw[r2e] (1*1.8+5.5,0) to  (-0.8090*1.8+5.5,0.5878*1.8);
	\draw[r2e] (0.309*1.8+5.5,0.951*1.8)  to (-0.8090*1.8+5.5,-0.5878*1.8);
	\draw[r2e] (-0.8090*1.8+5.5,0.5878*1.8) to (0.309*1.8+5.5,-0.951*1.8);
	\draw[r2e] (-0.8090*1.8+5.5,-0.5878*1.8) to (1*1.8+5.5,0);
	\draw[r2e] (0.309*1.8+5.5,-0.951*1.8) to (0.309*1.8+5.5,0.951*1.8);

	\draw[fill=red!30] (1*1.8+5.5,0) ellipse (0.7cm and 0.7cm);
	\node [rectangle, fill=none, draw=none, text width = 1cm] at (1*1.8-0.1+5.5,0) {$F(\frac{n}{5},d)$};
	
	\draw[fill=red!30] (0.309*1.8+5.5,0.951*1.8) ellipse (0.7cm and 0.7cm);
	\node [rectangle, fill=none, draw=none, text width = 1cm] at (0.309*1.8-0.1+5.5,0.951*1.8) {$F(\frac{n}{5},d)$};
	
	\draw[fill=red!30] (-0.8090*1.8+5.5,0.5878*1.8) ellipse (0.7cm and 0.7cm);
	\node [rectangle, fill=none, draw=none, text width = 1cm] at (-0.8090*1.8-0.1+5.5,0.5878*1.8) {$F(\frac{n}{5},d)$};
	
	\draw[fill=red!30] (-0.8090*1.8+5.5,-0.5878*1.8) ellipse (0.7cm and 0.7cm);
	\node [rectangle, fill=none, draw=none, text width = 1cm] at ((-0.8090*1.8-0.1+5.5,-0.5878*1.8) {$F(\frac{n}{5},d)$};

	\draw[fill=red!30] (0.309*1.8+5.5,-0.951*1.8) ellipse (0.7cm and 0.7cm);
	\node [rectangle, fill=none, draw=none, text width = 1cm] at (0.309*1.8-0.1+5.5,-0.951*1.8) {$F(\frac{n}{5},d)$};

	\draw[re] (0,1*1.5,-0) to (-0.8660*1.5,-0.5*1.5);
	\draw[be] (-0.8660*1.5,-0.5*1.5) to (0.8660*1.5,-0.5*1.5);
	\draw[re] (0.8660*1.5,-0.5*1.5) to (0,1*1.5);

\draw[b2e] (-1.5,1.2) to (-0.5,2);

	\draw[fill=blue!25] (0,1*1.5) ellipse (0.9cm and 0.9cm);
	\draw[pattern=north west lines, pattern color=blue!70] (0,1*1.5) ellipse (0.9cm and 0.9cm);
	\node [rectangle, fill=none, draw=none, text width = 1cm] at (0+0.25,1*1.5) {$F_2$};
		\node [rectangle, fill=none, draw=none, text width = 1cm] at (0+0.25,1*1.5+1.2) {$X_1$};
		
	\draw[r2e] (-1.5,1.2) to (-0.5,1.2);

	\draw[b3e] (-1.5,1.2) to (-0.8660*1.5,-0.5*1.5);
\draw[r3e] (-1.5,1.2) to (0.8660*1.5,-0.5*1.5);

\draw[r3e] (-0.6,1.2) to (-0.8660*1.5-0.6,-0.5*1.5);
\draw[b3e] (-0.5,1.2) to (0.8660*1.5,-0.5*1.5);

	\draw[fill=white] (0-0.5,1*1.5-0.3) ellipse (0.3cm and 0.3cm);
	\node [rectangle, fill=none, draw=none, text width = 1cm] at (0-0.5+0.35,1*1.5-0.3) {{\small $B$}};

	\draw[fill=white] (0-1.5,1*1.5-0.3) ellipse (0.3cm and 0.3cm);
	\node [rectangle, fill=none, draw=none, text width = 1cm] at (0-1.5+0.35,1*1.5-0.3) {{\small $A$}};	
	
	\draw[fill=red!30] (-0.8660*1.5,-0.5*1.5) ellipse (0.9cm and 0.9cm);
	\node [rectangle, fill=none, draw=none, text width = 1cm] at (-0.8660*1.5+0.25,-0.5*1.5){$F_1$};
	\node [rectangle, fill=none, draw=none, text width = 1cm] at (-0.8660*1.5+0.25,-0.5*1.5-1.2){$X_2$};
	
	\draw[fill=red!30] (0.8660*1.5,-0.5*1.5) ellipse (0.9cm and 0.9cm);
	\node [rectangle, fill=none, draw=none, text width = 1cm] at  (0.8660*1.5+0.25,-0.5*1.5)  {$F_1$};
		\node [rectangle, fill=none, draw=none, text width = 1cm] at  (0.8660*1.5+0.25,-0.5*1.5-1.2)  {$X_3$};
	
	\end{tikzpicture}
	\caption{Constructions for $s=3,4,5$ in order. Colour $1$: dotted blue. Colour $2$: red.}\label{fig-lower}
\end{figure}

\subsection{Lower bound for $\varrho(K_3,K_4,\delta)$.}
 We construct an $n$-vertex $(K_3,K_4)$-free graph $G$ with $\alpha(G)\le \delta n$ and $(\frac{1}{3}+\frac{\delta}{2}+\frac{3\delta^2}{2}-o(1))n^2$ edges as follows.
\begin{itemize}
	\item By Theorem~\ref{thm-Brandt}, there exist $F_1:=F(\frac{n}{3}, d_1)$ and $F_2:=F(\frac{n}{3}-\delta n, d_2)$ where $d_i\in [\delta n- o(n),\delta n]$ for each $i\in [2]$. So $e(F_1)=\frac{\delta n^2}{6}+ o(n^2)$ and $e(F_2)=\frac{\delta n^2}{6}-\frac{\delta^2n^2}{2}+ o(n^2)$.
	
	\item Let $B=\{b_1,b_2,\ldots, b_{d_2}\}$ be an independent set of size $d_2$ in $F_2$. Let $F$ be an $n/3$-vertex graph obtained from $F_2$ by 
	\begin{itemize}
		\item first adding a clone set of $B$, i.e.~a set of $d_2$ new vertices $A=\{a_1,a_2,\ldots,a_{d_2}\}$ with $N_F(a_i):=N_{F_2}(b_i)$ for each $i\in [d_2]$; 
		\item adding all $[A,B]$-edges; and 
		\item adding an additional set of $\delta n-d_2$ isolated vertices.
	\end{itemize}
	Note that $F$ is \emph{not} triangle-free, and
	$$e(F)=e(F_2)+d_2^2+d_2^2=\frac{\delta n^2}{6}+\frac{3\delta^2n^2}{2}+ o(n^2).$$
	\item Finally, let $G$ be the graph obtained from $T_3(n)$ on partite sets $X_i$, $i\in[3]$, by putting a copy of $F$ in $X_1$ and a copy of $F_1$ in $X_2$ and $X_3$. 
\end{itemize}
It is clear that $G$ has the desired size and easy to check that the following 2-edge-colouring $\phi$ of $G$ is $(K_3,K_4)$-free, see Figure ~\ref{fig-lower}: 
\begin{itemize}
	\item let $\phi(A,X_2)=\phi(B,X_3)=\phi(X_2,X_3)=1$; 
	\item let $\phi(e)=1$, for all $e\in E(G[X_1]\setminus [A,B])$; 
	\item all other edges are of colour 2. 
\end{itemize}

\section{Upper bound for $\varrho(K_3,K_4,\delta)$}\label{sec-K3Ks}
For the convenience of the reader, we rephrase the upper bound as follows.

\begin{lemma}\label{thm-K3K4-upperbound}
Suppose $0<1/n \ll \delta \ll 1$.
Let $G$ be an $n$-vertex $(K_3,K_4)$-free graph with $\alpha(G)\le \delta n$. Then $$e(G)\le \frac{n^2}{3}+\frac{\delta n^2}{2}+\frac{3\delta^2n^2}{2}.$$ 
\end{lemma}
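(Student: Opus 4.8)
\medskip

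The plan is to run a regularity argument parallel to the proof of Theorem~\ref{thm-RTK8}, but now tracking the structure of the reduced graph more carefully because the extremal density $\frac13+\frac\delta2+\frac{3\delta^2}2$ has no $o(n^2)$ slack. First I would apply the multicolour Regularity Lemma (Lemma~\ref{lm:RL} with $k=2$) to the $2$-edge-coloured graph $G$ to obtain an $\eps$-regular partition $V_1\cup\dots\cup V_m$ and a reduced graph $R=R(\eps,\gamma)$, together with a $2$-colouring $\psi$ of $E(R)$ recording, for each edge $ij\in E(R)$, a colour $q\in[2]$ with $d(G_q[V_i,V_j])\ge\gamma$. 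The key structural input is that $R$, with colouring $\psi$, must itself be $(K_3,K_4)$-free in an appropriate quantitative sense: using $\alpha(G)\le\delta n$ together with the Counting Lemma (Lemma~\ref{lem-embedding}) and Ramsey-type reasoning as in Lemma~\ref{lem-drc-j} and Claim~\ref{cl-light-tri}, a $K_3$ in colour~$1$ of $R$ forces a $K_3$ in colour~$1$ of $G$ (contradiction), and a $K_4$ in colour~$2$ of $R$ forces, after passing to dense regular pairs and using Theorem~\ref{thm: Shearer} to upgrade independent-set-free blobs into cliques (since $\alpha(G)\le\delta n\ll\sqrt{n\log n}$ here), a $K_4$ in colour~$2$ of $G$ (contradiction). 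So $R$ is $(K_3,K_4)$-free; by the multicolour Ramsey-Turán density result of Erdős–Hajnal–Simonovits–Sós–Szemerédi (with $\varrho(K_3,K_4)=\tfrac13$) applied to $R$ — or more elementarily, since a $(K_3,K_4)$-free graph has no red $K_4$ and no blue $K_3$, hence by $R(3,3)=6$ its red graph is the union of at most... — one gets $e(R)\le(\tfrac13+o(1))m^2$, giving the leading term $\tfrac{n^2}{3}$.

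\medskip

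The delicate part is the lower-order terms $\frac{\delta}{2}+\frac{3\delta^2}{2}$, which do not survive regularisation: edges inside the $V_i$'s and edges in non-$\gamma$-dense pairs are invisible to $R$. To capture them I would instead argue directly, following the structure of the extremal construction in Section~\ref{sec-lowerbound-2col}. First use Lemma~\ref{lem: min degree} to pass to a subgraph with minimum degree close to the average degree, so that stability applies cleanly. Then show via the above regularity/Ramsey analysis plus a stability argument (Theorem~\ref{thm-furedi-stab} applied to the red graph $G_2$, which is $K_4$-free) that $V(G)$ admits a partition $X_1\cup X_2\cup X_3$ into near-equal parts such that $G_2$ is close to the complete tripartite graph between the parts, all but $o(n^2)$ of the blue edges lie inside the parts, and the blue graph inside each $X_i$ is triangle-free (else a blue $K_3$). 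Since $\alpha(G)\le\delta n$, the blue triangle-free graph $G_1[X_i]$ has independence number $\le\delta n$, so by a Turán-type / Andrásfai–Erdős–Sós-style bound each contributes at most about $\frac{\delta}{2}\cdot\frac{n^2}{3}\cdot\frac{1}{\text{(factor)}}$... more precisely one needs the sharp bound that a triangle-free graph on $n/3$ vertices with independence number $\le\delta n$ has at most $\frac{\delta n}{2}\cdot\frac n3$ edges — which is essentially $F(n/3,\delta n)$ being extremal, $d$-regular with $d=\delta n$ — plus an extra gain from the $[A,B]$-type configuration accounting for the $\frac{3\delta^2}{2}$ term.

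\medskip

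Here is where the real obstacle lies: squeezing out the exact constant $\frac{3\delta^2}{2}$ requires understanding precisely how many edges a $K_3$-free, independence-number-$\le\delta n$ graph on roughly $n/3$ vertices can have when it is allowed to be \emph{non-}regular, and crucially how cross-edges between the $X_i$'s in colour~$2$ interact with in-part edges in colour~$1$ — the construction gains the $\frac{3\delta^2}{2}$ precisely by overlaying a clone set $A$ of an independent set $B$ inside $X_1$ and pushing the $[A,B]$ edges into colour~$1$ while routing $[A,X_2]$ and $[B,X_3]$ into colour~$1$ as well. So the upper bound proof must show this is optimal: after the tripartition is fixed, one sets up a local optimisation over (i) the number of blue in-part edges in each $X_i$, constrained by triangle-freeness and $\alpha\le\delta n$, and (ii) the deficit in red cross-edges forced by any blue in-part edge that would otherwise create a red $K_4$ with two cross-edges. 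I expect this optimisation — effectively a weighted count showing $e(G)-\tfrac{n^2}{3}\le \tfrac{\delta n^2}{2}+\tfrac{3\delta^2 n^2}{2}$ — to be the technical heart, and it is where the asymmetry with $s=3,5$ (no $\delta^2$ term there) shows up, since for $s=4$ the Turán graph $T_3$ leaves "room" in the red colour to absorb a quadratic-in-$\delta$ amount of extra blue structure.
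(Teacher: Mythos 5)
Your high-level skeleton (pass to a min-degree subgraph via Lemma~\ref{lem: min degree}, use a regularity-based coloured stability to get a tripartition with a fixed colour pattern, then do a finer count for the $\delta$-order terms) is the same as the paper's, but you stop exactly at the decisive point: you describe the "local optimisation" that should yield $e(G)-\tfrac{n^2}{3}\le\tfrac{\delta n^2}{2}+\tfrac{3\delta^2n^2}{2}$ and say you expect it to be the technical heart, without giving the argument. The missing idea is concrete. With a partition satisfying the stability properties \ref{A1}--\ref{A6} and \ref{P1}, the paper defines $A:=\{v\in X_1: d_{G_1}(v,X_2)\ge n/5\}$ and $B:=\{v\in X_1: d_{G_1}(v,X_3)\ge n/5\}$; property \ref{A4} gives $A\cap B=\emptyset$, and a short case analysis (Claim~\ref{cl-K3K4-triangle-free}) shows that $G[X_2]$, $G[X_3]$, $G[X_1\setminus A]$ and $G[X_1\setminus B]$ are triangle-free and that $A,B$ are independent, hence each of size at most $\delta n$. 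Triangle-freeness plus $\alpha(G)\le\delta n$ bounds maximum degrees by $\delta n$, and splitting $e(G[X_1])=e(G[X_1\setminus A])+e(G[A,B])+e(G[A,X_1\setminus(A\cup B)])$ gives $e(G[X_1])\le \delta n|X_1|/2+3\delta^2n^2/2$, which is precisely where the $\tfrac{3\delta^2}{2}$ comes from. Note that $G[X_1]$ itself is \emph{not} triangle-free (neither is it in the extremal construction), so your intermediate observation that "the blue graph inside each $X_i$ is triangle-free" only concerns colour-1 edges and does not bound $e(G[X_1])$; without the $A,B$ mechanism your sketch has no route to the exact constant.

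There are also problems in your regularity step. With $\alpha(G)\le\delta n$ you cannot invoke Theorem~\ref{thm: Shearer}: here $\delta n\gg\sqrt{n\log n}$, so a triangle-free blob of linear size produces an independent set far smaller than $\delta n$ and no contradiction (and in any case a colour-2 $K_4$ of $R$ yields $K_4\subseteq G_2$ directly by the Counting Lemma, one vertex per cluster). More importantly, $(K_3,K_4)$-freeness of the edge-coloured reduced graph does not by itself give $e(R)\le(\tfrac13+o(1))m^2$: a complete graph on up to $R(3,4)-1$ vertices admits such a colouring, so that hypothesis alone only yields an $\mathrm{ex}(m,K_{9})$-type bound. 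The paper circumvents this by first applying the splitting lemma (Lemma~\ref{lem-split-2-col}) so that each cluster has small independence number in a recorded colour, working with vertex-and-edge-coloured generalised cliques ($(Z_3,Z_4)$-freeness), and combining the EHSSSz weak stability $|G\triangle T_3(n)|=o(n^2)$ with F\"uredi's theorem applied to the reduced graph $R$ — not to $G_2$, where it is vacuous since $e(G_2)$ is far below $n^2/3$. These ingredients are what establish \ref{A4} and \ref{P1}, on which the counting argument above depends, so the gap in your proposal is not only the final optimisation but also the coloured stability input it would need.
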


We use the stability approach. A weak stability was proven in~\cite{EHSSSz}\footnote{Their proof missed a case, which can be easily fixed. We include it in the online arXiv version.}, stating that an $n$-vertex $(K_3,K_4)$-free graph $G$ with $\alpha(G)\leq \delta n$ is close to $T_3(n)$. For the exact result for $\varrho(K_3,K_4,\delta)$, we need a coloured stability, which roughly says that any $(K_3,K_4)$-free colouring of an almost extremal graph should look similar to the colouring given in the lower bound construction.

\subsection{Coloured stability} 
\begin{lemma}\label{thm-K3K4-stab}
Suppose $0< 1/n \ll \delta\ll\gamma<1$. Let $G$ be an $n$-vertex $(K_3,K_4)$-free graph with $\alpha(G)\le \delta n$ and $\delta(G)\ge 2n/3$. Then for any $(K_3,K_4)$-free $2$-edge colouring $\phi: E(G)\rightarrow [2]$, there exists a partition $X_1\cup X_2\cup X_3$ of $V(G)$ such that the following holds.
	\begin{enumerate}[label={\rm (A\arabic*)}]
		\item\label{A1} For each $i\in [3]$, we have $|X_i|=n/3\pm 2\gamma^{1/7} n$.
		\item\label{A2} $\alpha(G_1[X_1])\le \gamma^{1/4} n$.
		\item\label{A3} For each $i\in [3]$, we have $\Delta(G[X_i])\le \gamma^{1/18}n$.
		\item\label{A4} For each $v\in X_1$, we have $\min\{d_{G_1}(v,X_2),~d_{G_1}(v,X_3)\}<\gamma^{1/9}n$.
		\item\label{A5} $\mincr(G[X_1,X_2,X_3])\ge n/3-\gamma^{1/19} n.$
		\item\label{A6} For all $i\in\{2,3\}$ and $v\in X_i$, we have $d_{G_2}(v,X_1)\ge |X_1|-\gamma^{1/20}n$.
	\end{enumerate}
	Furthermore, one of the following occurs:
	\begin{enumerate}[label={\rm (P\arabic*)}]
		\item\label{P1} For all $i\in \{2,3\}$ and $v\in X_i$, we have $\alpha(G_2[X_i])\le \gamma^{1/4} n$ and $d_{G_1}(v,X_{5-i})\ge |X_{5-i}|-\gamma^{1/20}n$.
		\item\label{P2} For all $i\in \{2,3\}$ and $v\in X_i$, we have
		$\alpha(G_1[X_i])\le \gamma^{1/4} n$ and $d_{G_2}(v,X_{5-i})\ge |X_{5-i}|-\gamma^{1/20}n$.
	\end{enumerate}
\end{lemma}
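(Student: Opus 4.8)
The plan is to start from the weak (uncoloured) stability result: since $G$ is $(K_3,K_4)$-free with $\alpha(G)\le\delta n$ and $\delta(G)\ge 2n/3$, we know $G$ is within $O(\sqrt{\gamma}\,n^2)$ edges of $T_3(n)$, so there is a partition $X_1\cup X_2\cup X_3$ with all parts of size $n/3\pm O(\gamma^{1/2}n)$ and only $O(\gamma n^2)$ non-crossing edges (and symmetrically $O(\gamma n^2)$ missing crossing pairs). I would take a max-cut $3$-partition to make this quantitative; combined with $\delta(G)\ge 2n/3$ a standard cleaning argument then upgrades "few bad pairs" to the pointwise statements \ref{A3} (each part spans a graph of small max degree, else a high-$G[X_i]$-degree vertex together with the extremal structure would force a $K_4$ in one colour or violate $\alpha$) and \ref{A5} (min crossing degree $\ge n/3-o(n)$, since a vertex missing many neighbours in some $X_j$ has, by $\delta(G)\ge 2n/3$, almost all of the other two parts as neighbours, and this tripartite-plus-large-degree configuration again yields a forbidden clique or an independent set of size $>\delta n$). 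Here I repeatedly use $R(3,3)=6$-type counting: in any colouring, a vertex seeing a large set in one colour forces a monochromatic structure because its neighbourhood in that colour is $K_3$-free hence has a large independent set unless it is itself small.

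Next I would bring in the colouring $\phi$. The key local fact is: if $v\in X_i$ has $\ge \gamma^{1/9}n$ $G_1$-neighbours in $X_j$ \emph{and} $\ge\gamma^{1/9}n$ $G_1$-neighbours in $X_k$ (for $\{i,j,k\}=\{1,2,3\}$), then, since $G_1$ is triangle-free, the sets $N_{G_1}(v,X_j)$ and $N_{G_1}(v,X_k)$ are $G_1$-independent; by \ref{A5} they are almost-completely joined in $G$, all these edges are colour $2$, and because $\alpha(G)\le\delta n$ each of these two sets contains many edges — all of colour $2$ by the same reason — so across $N_{G_1}(v,X_j)$, $N_{G_1}(v,X_k)$ we can find a $K_4$ in $G_2$. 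This contradiction gives \ref{A4}: every $v\in X_1$ has small $G_1$-degree into $X_2$ \emph{or} into $X_3$. A counting/averaging argument over $X_1$ (using $|X_i|=n/3\pm O(\gamma^{1/2}n)$ and the near-completeness of crossing pairs) then forces, up to swapping the labels $2\leftrightarrow 3$, that \emph{most} crossing edges from $X_1$ are of colour $2$; one more cleaning pass promotes "most" to "all but $\gamma^{1/20}n$ per vertex", which is \ref{A6}, and along the way sharpens $X_1$ to be $G_1$-independent-ish, giving \ref{A2} (an independent set of size $>\gamma^{1/4}n$ in $G_1[X_1]$ together with its colour-$2$ neighbourhoods in $X_2$, $X_3$ would again build a $G_2$-copy of $K_4$).

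For the dichotomy \ref{P1}/\ref{P2}, I would analyse the colour of the $[X_2,X_3]$ edges. By \ref{A5}–\ref{A6} almost all of $X_2,X_3$ hang off $X_1$ in colour $2$; if some $v\in X_2$ had both a large colour-$1$ set and a large colour-$2$ set in $X_3$, the colour-$2$ part would, combined with $v$'s colour-$2$ neighbourhood in $X_1$, produce a $G_2$-triangle that extends (using $\alpha(G)\le\delta n$ inside these sets) to a $G_2$-$K_4$; so for each $v\in X_2$ the $[v,X_3]$ edges are almost monochromatic, and a connectivity/consistency argument across $v$ (two vertices of $X_2$ with opposite majority colours into $X_3$ would share a common $X_3$-neighbour and force a bad configuration) makes the choice global: either $\phi(X_2,X_3)$ is almost entirely colour $1$ — then $G_1$ restricted to $X_2\cup X_3$ is triangle-free and bipartite-like, forcing $\alpha(G_1[X_i])$ small for $i\in\{2,3\}$ while the colour-$1$ crossing degrees into the opposite part are large, i.e. \ref{P1} — or it is almost entirely colour $2$, and the symmetric argument (now $G_2[X_2\cup X_3]$ being $K_4$-free with the colour-$1$ links to $X_1$ large) gives \ref{P2}. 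Throughout, \ref{A1} is just the size bound from F\"uredi's stability theorem with the slightly weaker exponent $\gamma^{1/7}$ absorbed into the hierarchy.

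I expect the main obstacle to be the bookkeeping of the error exponents: each "find a monochromatic clique" step costs a fixed power of $\gamma$ (through the $\alpha(G)\le\delta n$ applications and Ramsey-type counting), and one must verify that the chain $\gamma\to\gamma^{1/4}\to\gamma^{1/9}\to\dots\to\gamma^{1/20}$ is consistent, i.e. that the weakest conclusion used as a hypothesis in a later step is implied by the stronger ones already established, with $\delta\ll\gamma$ small enough that $\delta n$ is negligible against every $\gamma^{c}n$ appearing. The conceptual heart — that a vertex with two large monochromatic crossing neighbourhoods in triangle-free colour $1$ yields a $K_4$ in colour $2$ — is short; making every "almost all" uniform (pointwise, for every vertex) via repeated deletion of the few exceptional vertices, while keeping the part sizes and $\delta(G)$ under control, is the technical core.
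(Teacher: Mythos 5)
There is a genuine gap, and it sits exactly at the statements that make this a \emph{coloured} stability lemma, namely (A2) and the independence-number halves of (P1)/(P2) (which in the paper's proof also feed into (A3), (A5), (A6)). Your only mechanisms for forcing colours are (i) triangle-freeness of $G_1$, which forces edges inside or between common $G_1$-neighbourhoods to be colour $2$, and (ii) ``$\alpha(G)\le\delta n$ gives an edge in any large set''. Mechanism (i) is sound and indeed your derivation of (A4) along these lines works. But (ii) never tells you the colour of the edge you find, so it cannot show that some part has small independence number \emph{in a prescribed colour}. Concretely, your justification of (A2) fails: a set $S\subseteq X_1$ with no $G_1$-edges only tells you that $G[S]=G_2[S]$ is a $K_4$-free graph with $\alpha\le \delta n$ — perfectly consistent (Ramsey--Tur\'an theory provides such graphs of linear size), and attaching ``colour-$2$ neighbourhoods in $X_2,X_3$'' produces no $K_4$ in $G_2$ unless you already know that $X_2$ or $X_3$ has small colour-$2$ independence number, which is precisely the other unproved half of (P1). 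The same circularity afflicts your (P1)/(P2) dichotomy: the ``extend the $G_2$-triangle to a $G_2$-$K_4$ using $\alpha(G)\le\delta n$'' step needs a coloured independence bound inside a part, not just $\alpha(G)\le\delta n$. More generally, deciding \emph{which} part plays the role of $X_1$ (colour-$1$-like inside, colour-$2$ crossing edges) is a global symmetry-breaking that no local Ramsey-type argument in $G$ performs; note also that $G$ itself contains many copies of $K_4$ (the lower-bound construction does), so your uncoloured ``cleaning'' cannot even give (A3): weak stability only bounds the total number of inside edges, not individual inside degrees, and a vertex of linear degree inside a part cannot be excluded without the coloured structure.

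The paper supplies exactly the missing machinery: Lemma 3.1 of [BLSh] (Lemma~\ref{lem-split-2-col}) splits $V(G)$ into $V_1^*\cup V_2^*$ with $\alpha(G_i[V_i^*])\le\sqrt{\delta}\,n$; the multicolour regularity lemma is applied refining this partition, and each cluster/edge of the reduced graph $R$ gets an induced colour $\phiind$. The fact that a monochromatic ``generalised clique'' $Z_3$ (colour 1) or $Z_4$ (colour 2) in the weighted reduced graph forces $K_3\subseteq G_1$ resp.\ $K_4\subseteq G_2$ — where the weight-$>1/2+\gamma$ subgraph $R'$ is needed to reuse a cluster when embedding a $K_4$ — is then exploited in Claims~\ref{cl-R-Vi-mono} and~\ref{cl-R-cp} to show that each max-cut part of $R$ is monochromatic in the vertex colouring and to pin down the global pattern (B1)/(B2). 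Only this yields (A2) and $\alpha(G_2[X_i])\le\gamma^{1/4}n$ (or its (P2) analogue), after which the pointwise degree statements (A3)--(A6) are derived by arguments of the type you sketch. Without the splitting lemma (or some substitute that converts the uncoloured independence hypothesis into per-colour independence bounds on the parts), your outline cannot be completed.
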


We need an additional definition for the proof of Lemma~\ref{thm-K3K4-stab}.
\begin{definition}
	Given a weighted graph $R$ with weight $d: E(R)\rightarrow (0,1]$ and $Y\subseteq X\subseteq V(R)$, a \emph{$\gamma$-generalised clique} of order $t:=|X|+|Y|$, denoted by $Z_t$, on $(X,Y)$ is a clique on $X$ with $d(e)>1/2+\gamma$ for every $e\in E(R[Y])$.
\end{definition}

For brevity, we will write $(a_1a_2\ldots a_s,b_1b_2\ldots b_{s'})$ for $(\{a_1,\ldots,a_s\},\{b_1,\ldots,b_{s'}\})$. 
		
Note that, for an $n$-vertex $2$-edge-coloured graph $G$ with $\alpha(G)=o(n)$, both $G_1, G_2$ can have $\Omega(n)$ independence number. We will use the following lemma combined with regularity lemma to obtain a regular partition such that each part of the partition induces a graph with small independence number in one of the two colours.
\begin{lemma}[Lemma 3.1 in~\cite{BLSh} with $r=2$]\label{lem-split-2-col}
	Let $c>0$, $G$ be an $n$-vertex graph with $\alpha(G)\le c^2n$ and $\phi: E(G)\rightarrow [2]$. Then there exists a partition $V(G)=V_1^*\cup V_2^*$\footnote{It could be that some $V_i^*$ is empty.} such that for every $i\in [2]$, $\alpha(G_i[V_i^*])\le cn$.
\end{lemma}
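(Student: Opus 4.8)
The plan is a straightforward iterative peeling argument. Initialise $V_1^*:=V(G)$ and $V_2^*:=\emptyset$. As long as $G_1[V_1^*]$ contains an independent set $S$ with $|S|>cn$ --- equivalently, a subset $S\subseteq V_1^*$ spanning no edge of colour $1$ --- I move all of $S$ from $V_1^*$ into $V_2^*$, and repeat. The successively peeled sets $S_1,S_2,\dots$ are pairwise disjoint, since each $S_j$ is chosen inside the current $V_1^*$, which no longer contains any vertex of $S_1,\dots,S_{j-1}$; as these are disjoint subsets of $V(G)$ each of size $>cn$, the process terminates after $t<1/c$ steps, producing a partition $V(G)=V_1^*\cup V_2^*$ with $V_2^*=S_1\cup\dots\cup S_t$.

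At termination $G_1[V_1^*]$ has no independent set of size larger than $cn$, i.e.\ $\alpha(G_1[V_1^*])\le cn$, which is one of the two required inequalities. For the other, note that each $S_j$ spans no edge of colour $1$ (being an independent set of $G_1$ when it was peeled, a property that does not depend on the current $V_1^*$), so \emph{every} edge of $G$ inside $S_j$ has colour $2$. Hence if $I$ is any independent set of $G_2[V_2^*]$, then for each $j$ the set $I\cap S_j$ spans no colour-$2$ edge and therefore no edge of $G$ at all, so $|I\cap S_j|\le\alpha(G)\le c^2n$. Summing over $j$ gives $|I|\le t\cdot c^2n<(1/c)\cdot c^2n=cn$, whence $\alpha(G_2[V_2^*])\le cn$, as needed. (The case $c\ge 1$ is trivial: no peeling occurs and $V_1^*=V(G)$, $V_2^*=\emptyset$ already works since $\alpha(G_1),\alpha(G_2[\emptyset])\le n\le cn$.)

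There is essentially no obstacle here; the only points requiring care are that the peeled sets are pairwise disjoint --- so that at most $1/c$ of them are produced --- and that the hypothesis is stated with $\alpha(G)\le c^2n$ rather than $\alpha(G)\le cn$: the exponent $2$ is precisely what compensates for the factor $1/c$ loss coming from bounding the number of peeling steps. (This is the $r=2$ instance of the general statement, which for $r$ colours runs the same scheme and correspondingly asks for $\alpha(G)\le c^rn$.)
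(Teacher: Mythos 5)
Your proof is correct. Note that the paper does not prove this lemma itself --- it is quoted from \cite{BLSh} (Lemma~3.1 there, stated for $r$ colours with the hypothesis $\alpha(G)\le c^rn$) --- so there is no in-paper proof to compare against; your peeling argument, extracting pairwise disjoint colour-$1$-independent sets of size exceeding $cn$ and observing that each peeled set spans only colour-$2$ edges so that any independent set of $G_2$ inside their union meets each of the at most $1/c$ peeled sets in at most $\alpha(G)\le c^2n$ vertices, is a complete and essentially the standard proof, and as you say it extends verbatim to $r$ colours.
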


\begin{proof}[Proof of Lemma~\ref{thm-K3K4-stab}]
We choose the constants as follows:
$$0< 1/n\ll\delta\ll \delta^* \ll 1/m \ll\eps\ll \gamma\ll 1.$$
We apply Lemma~\ref{lem-split-2-col} with $c=\delta^{1/2}$ to obtain a partition $V_1^*\cup V_2^*$ such that $\alpha(G_i[V_i^*])\leq \delta^{1/2} n$.
 Apply Theorem~\ref{lm:RL} with $G, V^*_1, V^*_2,\phi, \eps, \epsilon^{-1}$ and $M'$ playing the roles of $G,U_1,U_2,\phi, \epsilon,M$ and $M'$ to obtain an $\eps $-regular partition $V_1\cup \dots\cup V_m$ with $\epsilon^{-1}\leq m\leq M'$ which refines the partition $V_1^*\cup V_2^*$. Let $R:=R(\eps ,\gamma,\phi,(V_i)_{i\in[m]})$ be its reduced graph. It was shown in~\cite{EHSSSz} (Theorem~3(b) and~(e)) that $|G\triangle T_3(n)|\le \delta^* n^2$. As a consequence, the number of $K_4$ in $G$ is at most $\delta^* n^4$. 
 It is well-known that the reduced graph $R$ essentially inherits the structure of $G$: $\delta(R)\ge (2/3-3\gamma)m$ and $R$ is $K_4$-free. Indeed, if $K_4\subseteq R$, then by Lemma~\ref{lem-embedding}, $G$ contains at least $(\gamma/2)^6(n/m)^4/2 > \delta  n^4$ copies of $K_4$, a contradiction. Thus, by Theorem~\ref{thm-furedi-stab}, 
\begin{equation}\label{eq-R-close-to-T3}
	|R\triangle T_3(m)|\le \gamma^{1/3}  m^2.
\end{equation}

We define a colouring $\phiind:V(R)\cup E(R)\rightarrow [2]$, induced by $\phi$, as follows: 
\begin{itemize}
	\item[(i)] for each $k\in [m]$, we have $\phiind(k)=i$ if $V_k\subseteq V_i^*$; and
	
	\item[(ii)] for each $pq\in E(R)$, we have $\phiind(pq)=1$ if $d_{G_1}(V_p,V_q)\ge\gamma$, and $\phiind(pq)=2$ if $d_{G_1}(V_p,V_q)<\gamma$ and $d_{G_2}(V_p,V_q)\ge\gamma$.
\end{itemize}
We remark that colour $1$ has ``higher priority'' on $E(R)$ in $\phiind$, i.e.~if $(V_i,V_j)$ is dense in both $G_1$ and $G_2$, then we have $\phiind(ij)=1$. This asymmetry is needed for the embedding later. 
For each $pq$, we let $d(pq):= d_{G_{\phiind(pq)}}(V_p, V_q)$ be the weight on $E(R)$, and we consider $R$ as a weighted graph.
 It is also well-known that for each $p\in V(R)$, we have
 \begin{align}
 \sum_{q\in N_{R}(p)} d(pq) \geq \left(\frac{m}{n}\right)^2\left(\delta(G)\frac{n}{m} - \frac{\epsilon n^2}{m} - \frac{2\gamma n^2}{m} - \left(\frac{n}{m}\right)^2 \right) \geq (2/3- 3\gamma) m.
 \end{align}

Let $R'$ be the graph obtained from $R$ by deleting all edges of weight at most $1/2+\gamma$. Then for each $p\in V(R)$, we have
$$(2/3- 3\gamma) m  \leq \sum_{q\in N_{R}(p)} d(pq) \leq d_{R'}(p) + (1/2+\gamma) (d_{R}(p)-d_{R'}(p)),$$ 
thus, as $\delta(R)\ge(2/3-3\gamma)m$, we have
\begin{align}\label{eq: R' min deg}
d_{R'}(p) \geq 4m/3 - d_{R}(p) - 9\gamma m.
\end{align}
Moreover, by \eqref{eq-R-close-to-T3}, we know $e(R)\le \frac{m^2}{3}+\gamma^{1/3}m^2$. As $\delta(G)\geq 2n/3$, similar to~\eqref{eq: size original reduced}, we have 
$$n^2/3\le e(G)\le (e(R)-e(R'))\frac{(1/2+\gamma) n^2}{m^2}+e(R')\frac{n^2}{m^2}+(2\gamma+\epsilon + 1/m) n^2.$$
This implies
\begin{align}\label{eq: R' deleted}
e(R)-e(R') \leq \gamma^{1/4}m^2.
\end{align}

We will omit $\gamma$ in the term `$\gamma$-generalised clique'.
For each $i\in [2]$ and $Y\subseteq X\subseteq V(R)$, we say that a generalised clique $Z_t$ in $R$ on $(X,Y)$ is \emph{of colour $i$} if $\phiind(k)=\phiind(pq)=i$, for all $k\in Y$ and $pq\in {X\choose 2}$. We say that $R$ is \emph{$(Z_{t_1},Z_{t_2})$-free} if there is no $Z_{t_i}$ of colour $i$ for any $i\in[2]$. It was implicitly proven in the proof Theorem~1.3 in~\cite{BLSh} that a $Z_t$ of colour $i$ in $R$ implies $K_t\subseteq G_i$. This implies the following, since $G$ is $(K_3,K_4)$-free:
\begin{equation}\label{eq-generlised-clique}
R~ \mbox{ is } ~(Z_3,Z_4)\mbox{-free}.
\end{equation}

Let $U_1\cup U_2\cup U_3$ be a max-cut $3$-partition of $R$. The desired partition of $V(G)$ will be an adjustment of this partition.
By \eqref{eq-R-close-to-T3} and the definition of max-cut and Theorem~\ref{thm-furedi-stab}, it is easy to see that we have
	\begin{equation}\label{eq-Ui-property}
	\sum_{i\in[3]}e(R[U_i])\le \gamma^{1/3} m^2, \quad \quad |U_i|=m/3\pm \gamma^{1/7} m  \enspace \text{and}
	\end{equation}
	\begin{equation}\label{eq-Ui-mincr}
	\mincr(R[U_1,U_2,U_3])\ge (\delta(R)-\max_{i\in[3]}|U_i|)/2\ge m/7.
	\end{equation}


	We will obtain the colour pattern of $R$ in $\phiind$. First we show that each vertex set $U_i$ is monochromatic in $\phiind$.
	\begin{claim}\label{cl-R-Vi-mono}
		For every $i\in[3]$, there exist $j\in [2]$ such that $\phiind(U_i)=j$. In particular, we have
		$$\alpha(G_j[\cup_{k\in U_i}V_k])\le \sqrt{\delta}n.$$	
	\end{claim}
	\begin{proof}
Suppose the lemma is not true, then by symmetry, we may assume that $\phi(U_1)\neq j$ for any $j\in [2]$. Let $W:=\{ w \in U_1: \phiind(w)=2\}$.
We shall argue that one of the following two cases must happen and then derive contradictions in each case.

\noindent {\bf Case 1.} There exists vertices $u,w \in U_1$
 $v_2\in U_2$, $v_3\in U_3$ such that $\{v_2v_3, uv_2,uv_3\} \subseteq E(R)$  and $wv_2,wv_3 \subseteq E(R')$ and $ \phiind(u)=1, \phiind(w)=2$.
 
\noindent {\bf Case 2.}  There exists vertices $u,w \in U_1$
$v_2\in U_2$, $v_3\in U_3$ such that $R[\{u,w,v_2,v_3\}]$ induces a copy of $K_4$ and $\phiind(u)=1, \phiind(w)=2$.

Suppose that $|W|\geq m/100$. Fix an arbitrary $u\in U_1$ with $\phiind(u)=1$. Then, by \eqref{eq: R' deleted}, more than half of the vertices $w$ in $W$ satisfy $d_{R'}(w)\geq d_{R}(w)-\gamma^{1/5}m$, and as $\sum_{i\in[3]}e(R[U_i])\le \gamma^{1/3} m^2$, more than half of the vertices $w$ in $W$ satisfies $|N_{R}(w,U_1)| \leq \gamma^{1/4}m$.
Hence there exists $w\in W$ with $|N_{R'}(w, U_i)|\ge \delta(R) - |U_{5-i}| - 2\gamma^{1/5}m \geq m/4$ for each $i\in\{2,3\}$. 
By this and \eqref{eq-Ui-mincr}, for each $i\in \{2,3\}$ we have $$|N_R(u,U_i)\cap N_{R'}(w,U_i)|\ge m/7+m/4-|U_i|\ge m/30.$$ 
Together with \eqref{eq-R-close-to-T3} and the definition of max-cut partition, this
implies that there exists an edge $v_2v_3$ between $N_R(u,U_2)\cap N_{R'}(w,U_2)$ and $N_R(u,U_3)\cap N_{R'}(w,U_3)$, yielding Case 1.

We may then assume that  $|W|\leq m/100$. Fix an arbitrary $w\in W$. If $|N_{R}(w,U_1)|> m/50$, then we have $ |N_{R}(w,U_1\setminus W)|\geq m/100$.
As $\sum_{i\in[3]}e(R[U_i])\le \gamma^{1/3} m^2$, more than half of the vertices $u$ in $N_{R}(w,U_1\setminus W)$ satisfy $|N_{R}(u,U_1)| \leq \gamma^{1/4}m$.
Hence there exists $u \in N_{R}(w,U_1\setminus W)$ with $|N_{R}(u, U_i)|\geq m/4$ for each $i\in\{2,3\}$. 
By this and \eqref{eq-Ui-mincr}, for each $i\in \{2,3\}$ we have $$|N_R(u,U_i)\cap N_{R}(w,U_i)|\ge m/7+m/4-|U_i|\ge m/30.$$ 
Thus by \eqref{eq-R-close-to-T3} and the definition of max-cut partition, there exists an edge $v_2v_3$ between $N_R(uw,U_2)$ and $N_R(uw,U_3)$, yielding Case 2. 

Thus we may assume that $|N_{R}(w,U_1)|\leq m/50$, thus $d_{R}(w) \leq |U_2|+|U_3| + m/50\leq (2/3+ 1/40)m$. 
Together with \eqref{eq: R' min deg}, this implies that 
$$d_{R'}(w)\geq 4m/3 - (2/3+ 1/40)m - 9\gamma m \geq (2/3 -1/30)m.$$
Hence, for each $i\in \{2,3\}$, we have
$$|N_{R'}(w,U_i)| \geq d_{R'}(w) - |N_{R}(w,U_1)| - |U_{5-i}| 
\geq (1/3-1/30-1/40)m \geq m/4.$$
By  \eqref{eq-Ui-mincr}, there exists a vertex $u\in U_1\setminus W$ such that 
for each $i\in \{2,3\}$, we have \begin{align*}
|N_{R}(u,U_{i})\cap N_{R'}(w,U_{i})| &\geq m/4 + m/7 - |U_{2}| \geq m/30.
\end{align*}
Thus by \eqref{eq-R-close-to-T3} and the definition of max-cut partition, there exists an edge $v_2v_3$ between $N_{R}(u,U_{2})\cap N_{R'}(w,U_{2})$ and $N_{R}(u,U_{3})\cap N_{R'}(w,U_{3})$, yielding again Case 1. 

For each $i\in \{2,3\}$ and $j\in [2]$, if $\phiind(U_i)=j$, then, by the definition of $\phiind$, we have $\cup_{k\in U_i}V_k\subseteq V_j^*$, and so $\alpha(G_j[\cup_{k\in U_i}V_k])\le \alpha(G_j[V_j^*])\le \sqrt{\delta}n$ as desired. We shall now derive contradictions in each case to finish the proof.

Suppose {\bf Case 1} happens. By the definition of $R'$, for each $i\in \{2,3\}$ we have $d(wv_i)\geq 1/2+\gamma$.
As $\phiind(u)=1$, we must have $\phiind(uv_i)=2$ for $i\in\{2,3\}$, otherwise we get a $Z_3$ of colour $1$ on $(uv_i,u)$, contradicting~\eqref{eq-generlised-clique}. 
		Suppose now that $\phiind(v_2v_3)=2$. For each $i\in\{2,3\}$, it must be that $\phiind(v_i)=1$, otherwise $(uv_2v_3,v_i)$ is a $Z_4$ of colour $2$, which in turn implies that $\phiind(wv_i)=2$, otherwise $(wv_i,v_i)$ is a $Z_3$ of colour $1$. But then $(wv_2v_3,w)$ is a $Z_4$ of colour 2, a contradiction.
		
		Hence, we may assume that $\phiind(v_2v_3)=1$. For each $i\in\{2,3\}$, we must have $\phiind(v_i)=2$, otherwise we get a $Z_3$ of colour $1$ on $(v_2v_3,v_i)$, a contradiction. As $d(wv_i)\ge 1/2+\gamma$ and $\phiind(w)=2$, we must have $\phiind(wv_i)=1$, otherwise we get a $Z_4$ of colour $2$ on $(wv_i,wv_i)$. However, then we have a $Z_3$ of colour $1$ on $(wv_2v_3,\emptyset)$, a contradiction.

Suppose {\bf Case 2} happens. As $\phiind(u)=1$, we must have $\phiind(uw)= \phiind(uv_i)=2$ for $i\in\{2,3\}$, otherwise we get a $Z_3$ of colour $1$ on $(uv_i,u)$ or $(uw,u)$, contradicting~\eqref{eq-generlised-clique}. 
		
		Suppose now that $\phiind(v_2v_3)=2$. Then for each $i\in \{2,3\}$, we have $\phiind(v_i)=1$, otherwise $(uv_2v_3,v_i)$ is a $Z_4$ of colour $2$, which in turn implies that $\phiind(wv_i)=2$ for each $i\in \{2,3\}$. But then $(wuv_2v_3,\emptyset)$ is a $Z_4$ of colour $2$, a contradiction.
		
		Hence, we may assume that $\phiind(v_2v_3)=1$, Then for each $i\in\{2,3\}$, we must have $\phiind(v_i)=2$, otherwise we get $Z_3$ of colour $1$ on $(v_2v_3,v_i)$. Moreover, for each $i\in \{2,3\}$, we must have $\phiind(wv_i)=1$, otherwise $(v_iuw, w)$ is a $Z_4$ of colour $2$. But then $(wv_2v_2,\emptyset)$ forms a $Z_3$ of colour $1$, a contradiction.
\end{proof}
	
	\begin{claim}\label{cl-R-cp}
By permuting indices of $U_1,U_2,U_3$, we may assume the following.
We have $\phiind(U_1)=1$ and for each $i\in \{2,3\}$, we have $\phiind(U_1,U_i)=2$ and one of the following holds.
\begin{enumerate}[label={\rm (B\arabic*)}]
\item \label{B1} $\phiind(U_2)=\phiind(U_3)=2$ and $\phiind(U_2,U_3)=1$; or
\item \label{B2} $\phiind(U_2)=\phiind(U_3)=1$ and $\phiind(U_2,U_3)=2$.
\end{enumerate}
\end{claim}
	\begin{proof}
	If $\phiind(U_i)=2$ for all $i\in [3]$, then it is easy to see that all crossing edges of $R'$ are of colour $1$, otherwise we obtain a generalised clique $Z_4$ of colour $2$. However, then we can easily check that $R$ contains a copy of $K_3$ of colour $1$, which is again a contradiction.

 Hence, by Claim~\ref{cl-R-Vi-mono}, we may assume that $\phiind(U_1)=1$. Then as $R$ does not have a generalised clique $Z_3$ of colour $1$,  we have that $\phiind(U_1,U_i)=2$ for $i\in\{2,3\}$. If $\phiind(U_2)=2$, then $\phiind(U_2,U_3)=1$, otherwise we get a generalised clique $Z_4$ of colour $2$. But then we must have $\phiind(U_3)=2$, giving \ref{B1}. Similarly if $\phiind(U_2)=1$, we obtain \ref{B2}.
	\end{proof}	
	Let $V(G)=X_1'\cup X_2'\cup X_3'$ be the partition corresponding to $V(R)=U_1\cup U_2\cup U_3$, i.e.~$X_i':=\cup_{k\in U_i}V_k$, then \eqref{eq-Ui-property} implies that for each $i\in [3]$ we have $|X_i'|=\frac{n}{3}\pm \frac{3}{2} \gamma^{1/7} n$.
	Note that we have
		\begin{equation}\label{eq-eXi-prime}
			\sum_{i\in[3]}e(G[X_i'])\le \sum_{i\in[3]}e(R[U_i])\cdot \left(\frac{n}{m}\right)^2+\epsilon n^2 + \frac{n^2}{m} + 2\gamma n^2\stackrel{(\ref{eq-Ui-property})}{\le} 2\gamma^{1/3} n^2.
		\end{equation} 
	  Note that \eqref{eq-Ui-mincr} provides a minimum crossing degree of $R$ with respect to the partition $U_1\cup U_2\cup U_3$. However, in $G$, some vertex could have low crossing degree with respect to the partition $X_1'\cup X'_2 \cup X'_3$. 
	To amend this problem, we will consider the following modified partition $X_1\cup X_2\cup X_3$ of $V(G)$.

	\begin{claim}\label{cl-refine-Vi}
	There exists a partition $X_1\cup X_2\cup X_3$ of $V(G)$ such that the following holds.
	\begin{enumerate}[label={\rm (X\arabic*)}]
	\item\label{X1} For each $i\in [3]$, we have $|X_i|= n/3 \pm 2\gamma^{1/7}n$ and $|X_i\triangle X'_i| \leq 20\gamma^{1/3} n$.
	\item\label{X2} $\mincr(G[X_1,X_2,X_3])\geq n/10$.
	\end{enumerate}
	\end{claim}
	\begin{proof}
For all $i\in[3]$ and $v\in X_i'$, if $d(v, X_j')\le n/10$ for some $j\neq i$, then move $v$ to $X_j'$. We repeat this until no such vertex exists. Let the resulting set be $X_i$, $i\in [3]$. We first show that this process terminates and so $X_i$s are well-defined.	

Recall that $\delta(G)\ge 2n/3$, so if there exist $ij\in{[3]\choose 2}$ and $v\in X_i'$ with $d(v, X_j')\le n/10$, we see that for each $k\neq j$, 
    $$d(v,X_k')\ge \delta(G)-n/10-\max_{i\in[3]}|X_i'|\ge n/5.$$ Thus, after moving $v$ from $X_i'$ to $X_j'$, the number of inner edges decreases by at least $n/5-n/10=n/10$. Hence, by \eqref{eq-eXi-prime}, after moving at most $2\gamma^{1/3}n^2/(n/10)= 20\gamma^{1/3} n$ vertices, the process stops.
Hence, we obtain \ref{X1} proving the first part and \ref{X2} holds by definition.
	\end{proof}
Note that \ref{A1} holds due to \ref{X1}.
By Claims~\ref{cl-R-Vi-mono},~\ref{cl-R-cp} and~\ref{X1}, we have \ref{A2} as
	\begin{equation}\label{eq-ind-X1}
	\alpha(G_1[X_1])\le \alpha(G_1[X_1'])+||X_1'|-|X_1||\le \sqrt{\delta}n+20\gamma^{1/3} n\le \gamma^{1/4} n.
	\end{equation}
	For what follows, we assume \ref{B1} holds, which then leads to \ref{P1} (\ref{B2} implying \ref{P2} can be proven analogously). Similar to~\eqref{eq-ind-X1}, \ref{B1} implies that $\alpha(G_2[X_i])\le \gamma^{1/4} n$ for $i\in \{2,3\}$, proving the first part of~\ref{P1}.
	
		We now bound $\Delta(G[X_i])$ for each $i\in \{2,3\}$. Without loss of generality, it is enough to bound $\Delta(G[X_2])$. Note first that, as $G_1$ is $K_3$-free, by~\eqref{eq-ind-X1}, for each $v\in V(G)$, we have
	\begin{equation}\label{eq-all2-X1}
d_{G_1}(v,X_1)\le \alpha(G_1[X_1])\le \gamma^{1/4} n.
	\end{equation}
	Define 
	$$J:=\bigcup_{i\in [3]}\{v\in X_i:~ d(v,\overline{X_i})\le |\overline{X_i}|-\gamma^{1/8}n\}$$ 
	to be the set of vertices with large missing crossing degree. By Claim~\ref{cl-refine-Vi} and~\eqref{eq-eXi-prime}, we have $$\sum_{i\in[3]}e(G[X_i])\le \sum_{i\in[3]}(e(G[X_i'])+||X_i'|-|X_i||n)\le \gamma^{1/4} n^2,$$
	and so, as $e(G)\ge \frac{n^2}{3}$ and $e(K_{|X_1|,|X_2|,|X_3|})\leq n^2/3$, we have that
	\begin{eqnarray*}
		e(\overline{G}[X_1,X_2,X_3])&\le& \frac{n^2}{3}-(e(G)-\sum_{i\in[3]}e(G[X_i]))\le \sum_{i\in[3]}e(G[X_i])\le \gamma^{1/4} n^2 \enspace \text{and}
	\end{eqnarray*}
\begin{align}\label{eq: J size}
|J|\le \frac{2e(\overline{G}[X_1,X_2,X_3])}{\gamma^{1/8}n}=2\gamma^{1/8}n.
\end{align}
We claim that for each $y\in X_3$, we have
	\begin{equation}\label{eq-X3-vx-pattern}
	d_{G_2}(y,X_2)\le  3\gamma^{1/8}n ~\mbox{ and } ~d_{G_1}(y,X_3)\le |J|\le 2\gamma^{1/8}n.
	\end{equation}
	Indeed, suppose that $d_{G_2}(y,X_2)>3\gamma^{1/8}n$. Then $|N_{G_2}(y,X_2)\setminus J|\ge \gamma^{1/8}n>\alpha(G_2[X_2])$, and so there exists $uv\in E(G_2)$ with $u,v\in N_{G_2}(y,X_2)\setminus J$. By \ref{X2},~\eqref{eq-all2-X1} and the definition of $J$, we have 
	$$|N_{G_2}(\{u,v,y\}, X_1)|\ge \mincr(G[X_1,X_2,X_3])-\sum_{x\in \{u,v,y\}}d_{G_1}(x,X_1)-2\cdot \gamma^{1/8}n\ge n/20,$$
	showing that $K_4\subseteq G_2$, a contradiction, thus the first part of \eqref{eq-X3-vx-pattern} holds.
	
	Suppose that $d_{G_1}(y,X_3)>|J|$. So there exists $yy'\in E(G_1[X_3])$ with $y'\notin J$. But then the first part of~\eqref{eq-X3-vx-pattern} implies that $$|N_{G_1}(\{y,y'\}, X_2)|\ge \mincr(G[X_1,X_2,X_3])-\sum_{x\in \{y,y'\}}d_{G_2}(x,X_2)-\gamma^{1/8}n\ge n/20,$$
	contradicting $K_3\not\subseteq G_1$. Thus \eqref{eq-X3-vx-pattern} holds.
	
	We now show that for each $y\in X_3$, we have $d_{G_2}(y,X_3)\le 3\gamma^{1/17}n$, which together with~\eqref{eq-X3-vx-pattern} implies that $\Delta(G[X_3])\le \gamma^{1/18}n$. Fix an arbitrary $y\in X_3$ and let $Y:=N_{G_2}(y,X_3)$. suppose to the contrary that $|Y|>3\gamma^{1/17}n$. For $i\in[2]$, define
	$$J_i:=\{v\in X_i:~ d_{G_i}(v,X_3)\ge \gamma^{1/16}n\}.$$
	By~\eqref{eq-all2-X1} and~\eqref{eq-X3-vx-pattern}, we get, for each $i\in [2]$, that 
\begin{align}\label{eq: Ji size}
|J_i|\le \frac{e(G_i[X_i,X_3])}{\gamma^{1/16}n}\le \frac{|X_3|\cdot\max\{\gamma^{1/4}n,3\gamma^{1/8} n\}}{\gamma^{1/16}n}\le 3\gamma^{1/16}n.
\end{align}
	As $|N_{G_2}(y,X_1)|\ge \mincr(G[X_1,X_2,X_3])-d_{G_1}(y,X_1)>|J|+|J_1|$ due to~\eqref{eq-all2-X1} and \eqref{eq: J size}, we can pick $u\in N_{G_2}(y,X_1)\setminus (J\cup J_1)$. By the definition of $J$ and $J_1$, we have
	\begin{equation*}
	d_{G_2}(u,Y)\ge |Y|-d_{\overline{G}}(u,\overline{X_1})-d_{G_1}(u,X_3)\ge |Y|-\gamma^{1/17}n.
	\end{equation*}
	Similarly, we can pick $v\in X_2\setminus (J\cup J_2)$ with $d_{G_1}(v,Y)\ge |Y|-\gamma^{1/17}n$. Thus, writing $Y':=N_{G_1}(v,Y)\cap N_{G_2}(u,Y)$, we have $|Y'|\ge |Y|-2\gamma^{1/17}n> \alpha(G)$. So there exists $xx'\in E(G[Y'])$. However, if $\phi(xx')=1$, then $\{x,x',v\}$ induces a $K_3$ in $G_1$; while if $\phi(xx')=2$, then $\{x,x',u,y\}$ induces a $K_4$ in $G_2$, a contradiction. This shows $\Delta(G[X_3])\leq \gamma^{1/18}n$, and $\Delta(G[X_2])\leq \gamma^{1/18}n$.

	To bound $\Delta(G[X_1])$, we need to first prove \ref{A4} that no vertex in $X_1$ can have high $G_1$-degree to both $X_2$ and $X_3$. Suppose that $v\in X_1$ is such that $d_{G_1}(v,X_i)\ge \gamma^{1/9}n>|J|$ for both $i\in\{2,3\}$. Fix an arbitrary $u\in N_{G_1}(v,X_3)\setminus J$. Then by~\eqref{eq-X3-vx-pattern} and the fact that $u\notin J$, we have
	$$|N_{G_1}(\{v,u\}, X_2)|\ge d_{G_1}(v,X_2)-d_{\overline{G}}(u,\overline{X_3})-d_{G_2}(u,X_2)\ge \gamma^{1/8} n,$$
	which contradicts $K_3\not\subseteq G_1$, proving \ref{A4}.

	Fix an arbitrary $w\in X_1$, suppose to the contrary that $d(w,X_1)\ge \gamma^{1/18}n>d_{G_1}(w,X_1)+|J\cup J_1|$, due to~\eqref{eq-all2-X1} and \eqref{eq: J size} and \eqref{eq: Ji size}. Fix a vertex $u\in N_{G_2}(w,X_1)\setminus (J\cup J_1)$. By \ref{A4}, we may assume that $d_{G_1}(w,X_3)<\gamma^{1/9}n$. Then by \ref{X2} and the fact that $u\notin J\cup J_1$, we have
	$$|N_{G_2}(\{w,u\}, X_3)|\ge \mincr(G[X_1,X_2,X_3])-d_{G_1}(w,X_3)-d_{\overline{G}}(u,\overline{X_1})-d_{G_1}(u,X_3)> \alpha(G_2[X_3]),$$
	contradicting $K_4\not\subseteq G_2$. Thus, for each $i\in [3]$, we have $\Delta(G[X_i])\le \gamma^{1/18}n$, proving \ref{A3}. Consequently, 
	$$\mincr(G[X_1,X_2,X_3])\ge \delta(G)-\max_{i\in[3]}(\Delta(G[X_i])+|X_i|)\ge n/3-\gamma^{1/19} n,$$
	proving \ref{A5}. Together with \ref{A1}, this implies that
    for all $i\in \{2,3\}$ and $v\in X_i$, we have 
    $$d_{G_2}(v,X_1)\ge \mincr(G[X_1,X_2,X_3])-d_{G_1}(v,X_1)\stackrel{(\ref{eq-all2-X1})}{\ge}|X_1|-\gamma^{1/20}n,$$
	and that
	$$d_{G_1}(v,X_{5-i})\ge \mincr(G[X_1,X_2,X_3])-d_{G_2}(v,X_{5-i})\stackrel{(\ref{eq-X3-vx-pattern})}{\ge}|X_{5-i}|-\gamma^{1/20}n,$$
	proving \ref{A6} and the second part of \ref{P1} as desired.
\end{proof}



\subsection{Proof of Lemma~\ref{thm-K3K4-upperbound}} \label{sec-K3K4-upperbound}

Suppose that $e(G)> (\frac{1}{3}+ \frac{\delta}{2}+ \frac{3\delta^2}{2})n^2$.
By applying Lemma~\ref{lem: min degree} with $G,2/3,\delta+ 3\delta^2$ playing the roles of $G,d,\epsilon$, respectively, to obtain an $n'$-vertex graph $G'$ with $n'\geq \delta^{1/2} n/2$. Then $\delta(G')\geq 2n'/3$. Let $\delta':= \delta n/n' \in [\delta, \delta^{1/3}]$. Since $1\le \alpha(G')\leq \alpha(G) = \delta n = \delta' n'$,  we have
\begin{equation}\label{eq-Gprime-K3K4}
	e(G') \geq \frac{n'^2}{3} + \left(\frac{\delta}{2}+ \frac{3\delta^2}{2}\right)n^2-\frac{n-n'}{3}
	\geq \left(\frac{1}{3} + \frac{\delta'}{2} + \frac{3\delta'^2}{2}\right)n'^2.
\end{equation}

Note that $\phi$ still induces an edge-colouring of $G'$ which is $(K_3,K_4)$-free. As $1/n \ll \delta \ll \gamma$ and $n' \geq \delta^{1/2} n/2$ and $\delta' \in [\delta,\delta^{1/3}]$, we can apply Lemma~\ref{thm-K3K4-stab} with $G', \delta',\gamma$ playing the roles of $G,\delta,\gamma$ to obtain a partition $X_1\cup X_2\cup X_3$ of $V(G')$ satisfying \ref{A1}--\ref{A6}.
We assume that \ref{P1} occurs.\footnote{The~\ref{P2} case is only easier, we include its proof in the online arXiv version. In fact, graphs satisfying \ref{P2} case can only have at most $n'^2/3+\delta n'^2/2$ edges, a contradiction}. Define 
	$$A:=\{v\in X_1: d_{G'_1}(v,X_2)\ge n'/5 \}\quad \mbox{and} \quad B:=\{v\in X_1: d_{G'_1}(v,X_3)\ge n'/5 \}.$$
	Note that \ref{A4} implies that $A\cap B=\emptyset$. Bounding $e(G)$ amounts to show the following claim.
	\begin{claim}\label{cl-K3K4-triangle-free}
		The following hold:
		\begin{enumerate}[label={\rm (G$'$\arabic*)}]
		\item \label{G'1}For each $i\in\{2,3\}$, the graph $G'[X_i]$ is $K_3$-free.
		
		\item \label{G'2} Both $A$ and $B$ are independent sets and so $|A|,|B|\le \alpha(G')\leq \delta' n'$.
		
	\item \label{G'3} Both $G'[X_1\setminus A]$ and $G'[X_1\setminus B]$ are $K_3$-free.
		\end{enumerate}
	\end{claim}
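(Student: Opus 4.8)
The plan is to prove the three assertions of Claim~\ref{cl-K3K4-triangle-free} by exploiting the rigidity of the partition $X_1\cup X_2\cup X_3$ supplied by Lemma~\ref{thm-K3K4-stab} (properties \ref{A1}--\ref{A6} and \ref{P1}), together with the fact that $\alpha(G')\le \delta' n'$ is far smaller than the ``error terms'' $\gamma^{1/c} n'$ and the threshold $n'/5$. The common mechanism is: whenever we have two adjacent vertices spanning an edge of some colour, their common neighbourhood in an appropriate part is so large (linear in $n'$, much bigger than $\alpha(G')$) that it contains another edge, and examining the colour of that edge produces either a monochromatic $K_3$ in $G'_1$ or a monochromatic $K_4$ in $G'_2$, contradicting $(K_3,K_4)$-freeness.

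For \ref{G'1}, suppose $\{u,v,w\}$ is a triangle in $G'[X_2]$. By \ref{P1} we have $\alpha(G'_2[X_2])\le\gamma^{1/4}n'$, so not all of $uv,vw,uw$ can be of colour~$2$ — say $\phi(uv)=1$. By \ref{A6}, $d_{G'_2}(x,X_1)\ge|X_1|-\gamma^{1/20}n'$ for each of $x\in\{u,v,w\}$, hence $|N_{G'_2}(\{u,v,w\},X_1)|\ge|X_1|-3\gamma^{1/20}n'>\alpha(G')$, so this common neighbourhood in $X_1$ contains an edge $xx'$; then $\{x,x',u,v,w\}$ minus appropriate vertices yields either a $G'_1$-triangle (if $\phi(xx')=1$, using $\{x,x'\}$ and a colour-$2$... wait, we need colour-$1$ vertices) — more carefully: among the three edges of the triangle at least one, say $uv$, has colour~$1$; then for the edge $xx'$, if $\phi(xx')=2$ then $\{x,x',u,v,w\}$ contains a $K_4$ in $G'_2$ through $x,x',u,w$ (we must be slightly careful to use two of $u,v,w$ joined by a colour-$2$ edge, which exists since $uvw$ has at most one colour-$1$ edge), and if $\phi(xx')=1$ then we instead use $\{x,x',v\}$... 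This needs a short case analysis that I would carry out by first selecting, among $uv,vw,uw$, one colour-$1$ edge and one colour-$2$ edge (both exist, as $\alpha(G_1'[X_2]),\alpha(G_2'[X_2])<2$ would be false but there cannot be a monochromatic triangle in either colour), then picking the edge $xx'$ in the common $G'_2$-neighbourhood in $X_1$ of the colour-$2$ edge's endpoints, and finishing.

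For \ref{G'2}, suppose $u,v\in A$ with $uv\in E(G')$. By definition $d_{G'_1}(u,X_2),d_{G'_1}(v,X_2)\ge n'/5$, so $|N_{G'_1}(u,X_2)\cap N_{G'_1}(v,X_2)|$ is either large or we must work harder — here I would instead use that $u$ has $\ge n'/5$ colour-$1$ neighbours in $X_2$, so $N_{G'_1}(u,X_2)$ contains an edge $xx'$ (as $n'/5>\alpha(G')$); whatever the colour of $xx'$, combined with $u$ (and possibly $v$) it gives a small monochromatic clique — precisely, if $\phi(xx')=1$ then $\{u,x,x'\}$ is a $G'_1$-triangle, contradiction, so every edge inside $N_{G'_1}(u,X_2)$ has colour~$2$; but $\alpha(G'_2[X_2])\le\gamma^{1/4}n'<n'/5\le|N_{G'_1}(u,X_2)|$, so $G'_2[N_{G'_1}(u,X_2)]$ has an edge, contradiction. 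Hence $A$ is independent, so $|A|\le\alpha(G')\le\delta'n'$; symmetrically for $B$ using $X_3$. (This argument in fact does not even need $uv\in E$ — it shows $A$ cannot contain any vertex $u$ at all unless... no: it shows every $u\in X_1$ with $d_{G_1'}(u,X_2)\ge n'/5$ leads to a contradiction, which would be too strong; the resolution is that I only know $\alpha(G_2'[X_2])\le\gamma^{1/4}n'$ and $n'/5$ dominates it, so actually $A$ must be empty — but that contradicts the lower bound construction! So I must recheck: the point is $N_{G_1'}(u,X_2)$ is an independent set in $G_1'$ only if... no, it need not be independent in $G_1'$. The correct statement: an edge $xx'$ inside $N_{G_1'}(u,X_2)$ of colour~$1$ gives a $G_1'$-triangle $uxx'$; so all edges inside are colour~$2$, making $N_{G_1'}(u,X_2)$ an independent set in $G_1'[X_2]$ — not a contradiction since $\alpha(G_1'[X_2])$ is not controlled. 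So instead, for \ref{G'2} I must genuinely use the edge $uv$: take $xx'\in E(G')$ with $x,x'\in N_{G_1'}(u,X_2)$; it is colour~$2$ by the above; then $\{x,x',u,v\}$ — we have $ux,ux'$ colour~$1$, $xx'$ colour~$2$, and we need to know the colour of $uv, vx, vx'$. Hmm, this is getting delicate; the clean route is: $v\in A$ too, so by \ref{P1}/\ref{A4} and \ref{A6}, and since $u,v$ both have $\ge n'/5$ colour-$1$ neighbours in $X_2$, the intersection $W:=N_{G_1'}(u,X_2)\cap N_{G_1'}(v,X_2)$ has size $\ge 2n'/5 - |X_2| - O(\gamma^{1/18}n')$, which by \ref{A1} is at least $\ge n'/3-2\gamma^{1/7}n'-\dots$, still positive and $>\alpha(G')$ provided $2n'/5$... but $|X_2|\approx n'/3$ so $|W|\gtrsim n'/15>\alpha(G')$; then $W$ contains an edge $xx'$, and $\phi(xx')=2$ forces $\{u,x,x'\}$ or we get a $G_1'$-triangle — wait $xx'$ colour~$1$ gives $G_1'$-triangle $uxx'$, so $xx'$ is colour~$2$, and then $\{u,v,x,x'\}$: edges $ux,ux',vx,vx'$ all colour~$1$, $xx'$ colour~$2$; what about $uv$? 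Regardless of $\phi(uv)$, the four vertices $u,x$ with... I think the finish is: $x,x'\in X_2$ and by \ref{A6} both have nearly all of $X_1$ as colour-$2$ neighbours, so $N_{G_2'}(x,X_1)\cap N_{G_2'}(x',X_1)$ is large, contains an edge $yy'$, and $\{x,x',y,y'\}$ with $yy'$: if colour~$2$ it is a $G_2'$-$K_4$; if colour~$1$, then $y,y'\in X_1$ with $yy'$ colour~$1$, and $\{x,y,y'\}$... $xy,xy'$ are colour~$2$, so no $G_1'$-triangle there; I'd combine $y,y'$ with a colour-$1$ neighbour.)

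Given the subtlety, the actual proof I would write proceeds uniformly: for each of \ref{G'1}, \ref{G'3}, a putative $G'_1$-triangle or $G'_2$-triangle inside the relevant part, combined with the near-complete crossing colour patterns from \ref{A6} and \ref{P1}, produces a common neighbourhood of size $>\alpha(G')$ in a part where one colour has small independence number, yielding a same-coloured edge that completes a forbidden $K_3$ in $G'_1$ or $K_4$ in $G'_2$; for \ref{G'2} one first shows $A,B$ are independent by the analogous argument (an edge in $A$ together with the large common colour-$1$ neighbourhood in $X_2$, then pushing into $X_1$ via \ref{A6}, gives a forbidden configuration), whence $|A|,|B|\le\alpha(G')\le\delta'n'$. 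The main obstacle — and the part requiring the most care — is \ref{G'3}: unlike $X_2,X_3$, the part $X_1$ only has $\alpha(G'_1[X_1])\le\gamma^{1/4}n'$ (property \ref{A2}), so a triangle in $G'[X_1\setminus A]$ need not be monochromatic in colour~$2$; one must use the definition of $A$ (every vertex of $X_1\setminus A$ has $<n'/5$ colour-$1$ neighbours in $X_2$, and symmetrically $<n'/5$ in $X_3$ off $B$) to control where colour-$1$ edges from the triangle can ``escape'', then again find an edge in a large common neighbourhood and derive a contradiction; I expect to split into cases according to how many of the three triangle-edges are colour~$1$ versus colour~$2$ and to invoke \ref{A4}, \ref{A5}, \ref{A6} to bound the relevant common neighbourhoods from below by a quantity exceeding $\alpha(G')$.
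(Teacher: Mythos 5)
Your overall mechanism (common neighbourhoods of size exceeding $\alpha(G')$ plus colour analysis) is indeed the paper's mechanism, but at the decisive points your argument rests on false assertions or is left open. For \ref{G'1}, the opening claim that $\alpha(G'_2[X_2])\le\gamma^{1/4}n'$ forces some edge of the triangle to be colour $1$ is a non sequitur: a small independence number does not exclude a colour-$2$ triangle, and the monochromatic colour-$2$ triangle is in fact the main case. That case is handled by \ref{A6} alone: a colour-$2$ triangle $T\subseteq X_i$ has $|N_{G'_2}(T,X_1)|\ge|X_1|-3\gamma^{1/20}n'>0$, and any \emph{single} common neighbour already yields $K_4\subseteq G'_2$; no edge $xx'$ inside $X_1$ is needed. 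What you never prove --- and cannot prove with only \ref{A6}, which is all you invoke --- is that no edge of $T$ is colour $1$. This is where the second half of \ref{P1} enters: both endpoints of a colour-$1$ edge inside $X_i$ have colour-$1$ degree at least $|X_{5-i}|-\gamma^{1/20}n'$ into $X_{5-i}$, hence a common colour-$1$ neighbour, giving $K_3\subseteq G'_1$. Your deferred case $\phi(xx')=1$ genuinely does not close: for vertices of $X_1$ there is no lower bound on colour-$1$ crossing degree (\ref{A4} is an upper bound), so a colour-$1$ edge inside $X_1$ leads nowhere. Similarly your assertions ``at most one colour-$1$ edge'' and ``no monochromatic triangle in either colour'' are exactly what needs proof, not available facts.

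For \ref{G'2}, after correctly extracting a colour-$2$ edge $xx'$ in $N_{G'_1}(uv,X_2)$, you push into $X_1$ via \ref{A6}; but in $X_1$ only the colour-$1$ independence number is controlled (\ref{A2}), so the edge $yy'$ you find there may be colour $1$, and that case is left open --- and it cannot be closed, since nothing forces a colour-$2$ edge inside a large subset of $X_1$ (in the extremal construction $G[X_1]$ is essentially colour $1$). The paper goes the other way: from $|N_{G'_1}(uv,X_2)|\ge 2n'/5-|X_2|>\alpha(G')$ it first pins down $\phi(uv)=2$ (a colour-$1$ $uv$ would give a $G'_1$-triangle), then uses \ref{A4} --- since $u,v\in A$ have colour-$1$ degree at least $n'/5$ to $X_2$, they have colour-$1$ degree below $\gamma^{1/9}n'$ to $X_3$ --- and \ref{A5} to get $|N_{G'_2}(uv,X_3)|\ge n'/4>\alpha(G'_2[X_3])$, so the colour-$2$ edge found in $X_3$ (legitimate because \ref{P1} controls the colour-$2$ independence number there) completes a colour-$2$ $K_4$ with $u,v$. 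Finally, \ref{G'3} is only a plan in your write-up; the two missing ideas are: (i) use the already-proved independence of $A$ to choose a colour-$2$ edge $uw$ of the putative triangle with $w\notin A\cup B$, so that by the definition of $A,B$ and \ref{A5} one has $d_{G'_2}(w,X_i)\ge n'/10$ for both $i\in\{2,3\}$; and (ii) a dichotomy on $u$: either $d_{G'_1}(u,N_{G'_2}(w,X_2))<n'/20$, in which case $|N_{G'_2}(uw,X_2)|\ge n'/30$ directly, or $d_{G'_1}(u,X_2)\ge n'/20$, in which case \ref{A4} forces $d_{G'_1}(u,X_3)<\gamma^{1/9}n'$ and then \ref{A5} gives $|N_{G'_2}(uw,X_3)|\ge n'/30$; either way the common colour-$2$ neighbourhood exceeds $\alpha(G'_2[X_i])$ and a colour-$2$ $K_4$ appears. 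Without these steps the case analysis you gesture at does not go through.
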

	
First, we show how Claim~\ref{cl-K3K4-triangle-free} implies  Lemma~\ref{thm-K3K4-upperbound}. For each $i\in\{2,3\}$, \ref{G'1} implies that $\Delta(G'[X_i])\le\alpha(G')\le \delta' n'$, and so
	$e(G'[X_i])\le \delta' n'|X_i|/2$. On the other hand, 
\ref{G'2} and \ref{G'3} imply that $\Delta(G'[X_1\setminus A]),\Delta(G'[X_1\setminus B])\le \alpha(G')\le \delta' n'$, and so $e(G'[A,X_1\setminus (A\cup B)])\le \delta' n'|A|$. Therefore,
	\begin{eqnarray*}
		e(G'[X_1])&=& e(G'[X_1\setminus A])+e(G'[A,B])+e(G'[A,X_1\setminus(A\cup B)])\\
		&\le& (|X_1|-|A|)\delta' n'/2+|A|\delta' n'+\delta' n'|A|\le \delta' n'|X_1|/2+3\delta'^2 n'^2/2.
	\end{eqnarray*}
	Thus, we have
	$$e(G')\le e(G'[X_1,X_2,X_3])+\sum_{i\in[3]}e(G'[X_i])\le n'^2/3+\delta' n'^2/2+3\delta'^2n'^2/2,$$
	contradicting~\eqref{eq-Gprime-K3K4}. Thus we conclude that $e(G)\leq (\frac{1}{3}+ \frac{\delta}{2}+ \frac{3\delta^2}{2})n^2$.
	
	\begin{proof}[Proof of Claim~\ref{cl-K3K4-triangle-free}]
		Fix arbitrary $i\in \{2,3\}$. Suppose that $T=\{u,v,w\}$ induces a triangle in $G'[X_i]$. By \ref{A1} and \ref{P1}, we see that $|N_{G'_1}(T,X_{5-i})|\ge n/4$. As $G'_1$ is $K_3$-free, this implies that $T$ is monochromatic in colour $2$. But then, by \ref{A6}, we have $|N_{G'_2}(T,X_1)|\ge n/4$. This contradicts $K_4\not\subseteq G'_2$, proving \ref{G'1}.
		
		Suppose that $uv$ is an edge in $G'[A]$ (the proof for $B$ is similar). By the definition of $A$ and \ref{A1}, we have $|N_{G'_1}(uv, X_2)|\ge 2n/5-|X_2|\ge n/20$, implying that $\phi(uv)=2$ as $G'_1$ is $K_3$-free. Also by \ref{A4} and the fact that $u,v\in A$, we see that both $d_{G'_1}(u,X_3)$ and $d_{G'_1}(v,X_3)$ are less than $\gamma^{1/9}n$. Thus, by \ref{A5}, we have that
		$$|N_{G'_2}(uv, X_3)|\ge 2(\mincr(G'[X_1,X_2,X_3])-\gamma^{1/9}n)-|X_3|\ge n/4>\alpha(G'_2[X_3]).$$
		Hence, there exists an edge of colour $2$ in $N_{G'_2}(uv,X_3)$, contradicting $K_4\not\subseteq G'_2$, proving \ref{G'2}.
		
		Suppose $T=\{u,v,w\}$ induces a triangle in $X_1\setminus B$ (the proof for $X_1\setminus A$ is similar). Since $G'_1$ is $K_3$-free, we may assume that $\phi(uw)=2$. To prove \ref{G'3}, it suffices to show that $|N_{G'_2}(uw, X_i)|\ge n/30>\alpha(G'_2[X_i])$ for some $i\in\{2,3\}$, since then $K_4\subseteq G'_2$, a contradiction. As $A$ is an independent set due to \ref{G'2}, we may further assume that $w\notin A\cup B$. By the definition of $A$ and $B$, we have 
		\begin{equation}\label{eq-wXi-2deg}
		d_{G'_2}(w, X_i)\ge \mincr(G'[X_1,X_2,X_3])-n/5\ge n/10, \quad \forall~ i\in\{2,3\}.
		\end{equation}
		For each $i\in \{2,3\}$, let $W_i:=N_{G'_2}(w,X_i)$. Note that $d_{G'_1}(u,X_2)\ge d_{G'_1}(u,W_2)\ge n/20$, since otherwise $|N_{G'_2}(uw,X_2)|\ge n/30$ as desired. Then \ref{A4} implies that $d_{G'_1}(u,X_3)<\gamma^{1/9}n$. Together with \ref{A1},~\ref{A5}  and~\eqref{eq-wXi-2deg}, we have
		\begin{equation*}
		|N_{G'_2}(uw,X_3)|\ge \mincr(G'[X_1,X_2,X_3])-d_{G'_1}(u,X_3)+d_{G'_2}(w,X_3)-|X_3|\ge n/30,
		\end{equation*}
		as desired.
	\end{proof}
	This completes the proof of Lemma~\ref{thm-K3K4-upperbound}, providing the second equality of Theorem~\ref{thm-K3Ks}.


\section{Stability for $\varrho(K_3,K_3,\delta)$ without regularity}\label{sec-K3K3}
In this section, we present the upper bound on $\varrho(K_3,K_3,\delta)$.\footnote{The upper bound on $\varrho(K_3,K_5,\delta)$ can be proved by combining ideas in the proofs of the upper bounds on $\varrho(K_3,K_3,\delta)$ and $\varrho(K_3,K_4,\delta)$, we include its proof in the online arXiv version.} For convenience, we rephrase the upper bound as follows.

\begin{lemma}\label{thm-K3K3-weak}
Suppose $0<1/n \ll \delta < 10^{-13}$.
Let $G$ be an $n$-vertex $(K_3,K_3)$-free graph with $\alpha(G)\le \delta n$. Then $$e(G)\le \frac{n^2}{4}+\frac{\delta n^2}{2}.$$ 
\end{lemma}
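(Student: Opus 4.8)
The plan is to prove the upper bound $e(G) \le \frac{n^2}{4} + \frac{\delta n^2}{2}$ for any $n$-vertex $(K_3,K_3)$-free graph $G$ with $\alpha(G) \le \delta n$, using a stability-based argument that avoids the regularity lemma (since $\delta$ is an absolute constant rather than tending to zero in a regularity-controlled way, we want explicit dependencies). First I would pass to a subgraph of large minimum degree using Lemma~\ref{lem: min degree}: if $e(G)$ exceeds the claimed bound, then $G$ contains an $n'$-vertex subgraph $G'$ with $n' \ge c\sqrt{\delta}\, n$, $\delta(G') \ge n'/2$, and (after rescaling $\delta' := \delta n / n' \in [\delta, \delta^{1/3}]$) with $e(G') \ge \frac{n'^2}{4} + \frac{\delta' n'^2}{2}$, while still being $(K_3,K_3)$-free with $\alpha(G') \le \delta' n'$. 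The colouring $\phi$ restricts to $G'$, so it suffices to derive a contradiction there.

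Next I would establish a coloured stability statement analogous to Lemma~\ref{thm-K3K4-stab}: any $(K_3,K_3)$-free $2$-colouring of a graph on $N$ vertices with $\alpha \le \delta' N$ and $\delta \ge N/2$ admits a partition $V(G') = X_1 \cup X_2$ with $|X_i| = N/2 \pm o(N)$, $\mincr(G'[X_1,X_2])$ nearly $N/2$, and such that one colour, say colour $2$, is "almost complete" on $[X_1,X_2]$ (every cross edge between $X_1$ and $X_2$ except a bounded number gets colour $2$) while each $G'_1[X_i]$ has small independence number. The mechanism: apply Lemma~\ref{lem-split-2-col} with $c = \sqrt{\delta'}$ to split off where each colour has small independence number, then use that $G'$ is $(K_3,K_3)$-free to force the bipartite-like structure. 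Crucially, since $G'$ is $(K_3,K_3)$-free it has chromatic-number-type restrictions coming from $R(3,3) = 6$; the coloured stability should be provable directly, using that an edge of colour $1$ inside a near-common-neighbourhood together with a colour-$1$ triangle or a colour-$2$ near-common-neighbourhood edge gives a forbidden monochromatic triangle. This step is where one must be careful about a missed case in the original Erd\H os--S\'os argument, so I would re-examine the case analysis explicitly.

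With the stability structure in hand, the final counting is short. Every edge of $G'[X_1]$ must receive colour $1$ (a colour-$2$ edge $uv$ inside $X_1$ together with a colour-$2$ common neighbour in $X_2$, which exists since $\mincr$ is large and $\alpha$ is small, would give a colour-$2$ triangle); hence $G'_1[X_1]$ is a $K_3$-free graph, and likewise $G'_1[X_2]$ is $K_3$-free, so $e(G'[X_i]) \le e(T_2(|X_i|)) \le |X_i|^2/4$. But we also need the stronger bound $e(G'[X_i]) \le \delta' N \cdot |X_i| / 2$ coming from the independence-number constraint — wait, that is not quite what we want, since a triangle-free graph on $|X_i|$ vertices can have $|X_i|^2/4$ edges even with small independence number. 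The correct route: inside $X_i$, the colour-$1$ graph is triangle-free \emph{and} inside $X_i$ there are also colour-$2$ edges which must themselves avoid colour-$2$ triangles, and colour-$2$ common neighbourhoods in the other part force $\Delta$ bounds, so one shows $e(G'[X_i]) \le \frac{\delta' N}{2} \cdot |X_i| / 1$ up to lower-order terms by arguing that, after removing a small exceptional set, every vertex of $X_i$ has bounded internal degree. Summing, $e(G') \le e(G'[X_1,X_2]) + e(G'[X_1]) + e(G'[X_2]) \le \frac{N^2}{4} + \frac{\delta' N^2}{2} + o(\delta' N^2)$, contradicting $e(G') \ge \frac{N^2}{4} + \frac{\delta' N^2}{2}$ once the error terms are absorbed (here the passage to $G'$ with $\delta' \ge \delta$ is what makes the $o$-terms harmless relative to the $\delta'$-term).

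The main obstacle I anticipate is the coloured stability lemma — specifically proving that the bulk of the cross edges between $X_1$ and $X_2$ are monochromatic and that each side's colour-$1$ subgraph is triangle-free-with-bounded-max-degree, rather than merely triangle-free. A triangle-free graph with linear minimum degree but $o(n)$ independence number (e.g. a dense pseudorandom triangle-free graph) is exactly the $F^*$ appearing in the extremal construction, and we must show it cannot be "doubled up" beyond what the $\frac{\delta n^2}{2}$ term allows; this forces a delicate argument that the internal structure in each $X_i$ is essentially a single near-$\delta n$-regular triangle-free graph in colour $1$ with almost no colour-$2$ edges, which is where the bound $\delta < 10^{-13}$ presumably enters to keep all the error terms under control. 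I would handle this by bounding $e(\overline{G'}[X_1,X_2])$ from the edge count, deducing that few vertices have large missing cross-degree, and then running the triangle-forcing arguments only on the "good" vertices.
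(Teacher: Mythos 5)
Your overall architecture matches the paper's: pass to a minimum-degree subgraph via Lemma~\ref{lem: min degree}, prove a regularity-free coloured stability statement (the paper's Lemma~\ref{thm-2col-tri-stab}: a partition $A\cup B$ with $|A|,|B|=n'/2\pm\delta'^{1/3}n'$ and $\delta(G'_1[A,B])\ge 2n'/5$), and then count. But there is a genuine gap at the decisive final step. Your assertion that ``a triangle-free graph on $|X_i|$ vertices can have $|X_i|^2/4$ edges even with small independence number'' is false: in a triangle-free graph every neighbourhood is an independent set, so $\Delta\le\alpha$, and hence a triangle-free graph with $\alpha\le\delta'n'$ has at most $\delta'n'|X_i|/2$ edges. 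This elementary fact is exactly the mechanism the paper uses: once the stability gives large colour-$1$ cross-degree for \emph{every} vertex, any triangle inside $A$ has a large common colour-$1$ neighbourhood in $B$, so none of its edges can be colour $1$, so it would be a colour-$2$ triangle; hence $G'[A]$ and $G'[B]$ are (uncoloured) triangle-free and $\Delta(G'[X_i])\le\alpha(G')\le\delta'n'$, giving the exact bound $e(G')\le n'^2/4+\delta'n'^2/2$ with no error terms. Having missed this, you detour into ``exceptional sets'' and ``lower-order terms,'' and your final inequality $e(G')\le N^2/4+\delta'N^2/2+o(\delta'N^2)$ cannot be closed: the lemma's bound has no slack at all (in the worst case no vertices are deleted, $n'=n$ and $\delta'=\delta$), so the $o(\cdot)$ terms cannot be ``absorbed'' by the passage to $G'$ as you claim. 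This is why the stability statement must be a minimum-degree condition holding for \emph{every} vertex of both parts, not an ``almost complete except a bounded number of edges / after removing a small exceptional set'' statement.

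Separately, the coloured stability lemma itself --- the heart of the proof --- is only asserted in your sketch. The paper proves it by a genuinely nontrivial regularity-free argument: assuming $e(G_1)\ge e(G_2)$, it picks successive maximum-degree vertices to define sets $X,Y,Z,X',Y'$, runs an optimization in the parameters $\alpha=|Z|/n$, $\beta=|X'|/n$ to show $e(G_2)\le 22\delta n^2$ (using the key Fact~\ref{fact-tri} that $|N_{G_1}(x)\cap N_{G_2}(y)|\le\alpha(G)$), then applies F\"uredi's stability (Theorem~\ref{thm-furedi-stab}) to $G_1$ and a max-cut partition, deduces $\Delta(G_2)\le 42\sqrt{\delta}n$, and finally upgrades to the everywhere bound $\delta(G_1[A,B])\ge 2n'/5$. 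Your proposed route via Lemma~\ref{lem-split-2-col} plus ``the bipartite-like structure should be provable directly'' contains none of these steps, and in particular gives no indication of how to rule out vertices of small colour-$1$ cross-degree, which is precisely where the error-free structure comes from.
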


We will prove Lemma~\ref{thm-K3K3-weak} using the following coloured stability.
\begin{lemma}\label{thm-2col-tri-stab}
Suppose $0<1/n \ll \delta < 10^{-6}$.
	Let $G$ be an $n$-vertex $(K_3,K_3)$-free graph with $\alpha(G)\le \delta n$ and $\delta(G)\ge n/2$. Then for any $(K_3,K_3)$-free $\phi: E(G)\rightarrow [2]$, there exists a partition $V(G)=A\cup B$ with $|A|,|B|=n/2\pm \delta^{1/3}n$ and $\delta(G_1[A,B])\ge 2n/5$.
\end{lemma}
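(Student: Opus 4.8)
The plan is to run a density-increment / regularity-free argument. First I would exploit the hypothesis $\delta(G) \ge n/2$ together with $\alpha(G) \le \delta n$ to control the structure of the two colour classes. Since $G_1$ and $G_2$ are both triangle-free, for every vertex $v$ the neighbourhoods $N_{G_1}(v)$ and $N_{G_2}(v)$ are independent sets in $G_1$ and $G_2$ respectively, but crucially $N_{G_1}(v)$ spans a graph of colour $2$ only, and $N_{G_2}(v)$ spans a graph of colour $1$ only. Hence $G_2[N_{G_1}(v)]$ is triangle-free with independence number $\le \delta n$, so by Theorem~\ref{thm: Shearer} (or the trivial Ramsey bound $R(3,t)$) one gets $d_{G_1}(v) = |N_{G_1}(v)| = O(\delta n \cdot \text{something})$ — in fact a quantitative bound of the form $|N_{G_1}(v)| < C\delta n$ or similar, and symmetrically for $G_2$. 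Summing, $d_{G_1}(v) + d_{G_2}(v) = d_G(v) \ge n/2$, so neither colour degree can be too small; combined with the upper bound this forces each $d_{G_i}(v)$ to sit in a controlled range. I expect the key numeric claim to be that $G$ is essentially bipartite in a strong sense: the "red" graph $G_2$ (say) is the one carrying almost all edges and is close to a balanced complete bipartite graph.

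Next I would extract the bipartition. The natural candidate for $A \cup B$ is a max-cut $2$-partition of $G_2$ (or of $G$). Because $G_2$ is triangle-free with $\alpha(G_2) \le \delta n$ and $e(G_2) \ge e(G) - e(G_1) \ge n^2/4 - o(n^2)$ after we argue $e(G_1)$ is small, Theorem~\ref{thm-furedi-stab} (with $p=2$) gives $|G_2 \triangle K_{a,b}|= O(\sqrt{t}\,n)$ where $t = e(T_2(n)) - e(G_2)$, hence $|A|,|B| = n/2 \pm O(\sqrt{\delta}\,n)$, which yields the claimed $n/2 \pm \delta^{1/3} n$ after absorbing constants into the exponent. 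The content of the lemma, though, is the last assertion $\delta(G_1[A,B]) \ge 2n/5$: every vertex must send many colour-$1$ edges across the bipartition. For a vertex $v \in A$: its total degree is $\ge n/2$, almost all of $N_{G_2}(v)$ lies in $B$ (from the near-bipartite structure of $G_2$), and $N_{G_2}(v)$ is small (it is an independent set in... wait, no — it induces colour $1$), so actually $|N_{G_2}(v) \cap B|$ could be large; the point is that $|N_{G_2}(v) \cap A|$ is small because $G_2[A]$ has few edges. Then $d_{G_1}(v, B) = d_G(v,B) - d_{G_2}(v,B)$, and I need a lower bound on $d_G(v,B)$ and an upper bound on $d_{G_2}(v,B)$. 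The bound $d_{G_2}(v,B) \le \alpha(G_1[B])$? No: $N_{G_2}(v)$ induces colour $1$, and $G_1[B]$ being triangle-free does not bound its size directly. The right bound is $d_{G_2}(v, B) \le \alpha(G_2[N_{G_2}(v)])$-type reasoning: since $G_2$ is triangle-free, $N_{G_2}(v)$ is independent in $G_2$, so $d_{G_2}(v,B) \le \alpha(G_2) \le \delta n$? That is not right either, since $N_{G_2}(v)$ being $G_2$-independent only says it has no $G_2$-edges, which is automatic and gives no size bound. Let me re-think: the correct bound is $d_{G_1}(v)$ is small, proved via Shearer applied to $G_2[N_{G_1}(v)]$. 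So $d_{G_1}(v) \le C\delta n$, meaning $d_{G_1}(v,B) \le C\delta n$ — that's an upper bound, the wrong direction!

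So the structure must be that $G_1$ is the near-bipartite graph. Let me revise: exactly one of the two colours, say colour $1$, satisfies $d_{G_2}(v) \le C\delta n$ for all $v$ (because $G_1[N_{G_2}(v)]$ is triangle-free with small independence number). Hmm, but by symmetry this would apply to both colours and force $d_G(v) \le 2C\delta n < n/2$, a contradiction — unless the two colour classes partition the vertex set in some way, which is exactly what Lemma~\ref{lem-split-2-col} provides. So I would first apply Lemma~\ref{lem-split-2-col} (with $c = \delta^{1/2}$) to split $V(G) = V_1^* \cup V_2^*$ with $\alpha(G_i[V_i^*]) \le \delta^{1/2} n$; then for $v$ with, say, many neighbours in $V_1^*$ one controls $d_{G_2}(v, V_1^*)$ since $G_1[N_{G_2}(v) \cap V_1^*]$ is triangle-free with independence number $\le \delta^{1/2} n$, forcing $d_{G_2}(v, V_1^*) \le R(3, \delta^{1/2} n) = O(\delta n)$ wait, $R(3,k) = O(k^2/\log k)$ so $R(3,\delta^{1/2}n) = O(\delta n^2 / \log n)$ which is way too big. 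The correct statement from Shearer is the contrapositive: a triangle-free graph on $N$ vertices has an independent set of size $\gtrsim \sqrt{N\log N}$, so $\alpha(G_1[N_{G_2}(v)\cap V_1^*]) \gtrsim \sqrt{d_{G_2}(v,V_1^*)}$, and this must be $\le \delta^{1/2} n$, giving $d_{G_2}(v, V_1^*) = O(\delta n^2)$ — still useless for linear-degree vertices. I think the actual argument cannot go through raw Shearer; it must use that $\alpha(G) \le \delta n$ (the whole graph, all colours together), so any set of $> \delta n$ vertices contains an edge of $G$, i.e. a colour-$1$ or colour-$2$ edge. This is the real engine: in $N_{G_2}(v)$, there are no colour-$2$ edges (triangle-freeness of $G_2$), so every edge of $G$ inside $N_{G_2}(v)$ is colour $1$; thus $N_{G_2}(v)$ with the colour-$1$ edges has independence number $\le \delta n$ AND is triangle-free, so $|N_{G_2}(v)| < R(3, \delta n + 1) \le 3(\delta n)^2$ — no. Hmm, $R(3,t) \le t^2$ roughly, so $|N_{G_2}(v)| \le (\delta n)^2$, only useful when $\delta n < \sqrt{n}$.

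Given the hypothesis $0 < 1/n \ll \delta$, we have $\delta n \gg \sqrt n$, so this bound is vacuous; hence the argument is genuinely more delicate. I would therefore take the stability/max-cut route directly on $G$ (not via degree bounds): let $A \cup B$ be a max-cut of $G$; show $e(G[A]) + e(G[B]) = o(\delta n^2)$ using a rotation argument plus triangle-freeness of the colour classes and $\alpha(G) \le \delta n$; conclude $e(G[A,B]) \ge n^2/4$, so $G[A,B]$ is nearly balanced-complete-bipartite; then split the cross edges into colours, note $G_2[A,B]$ (say) must be small because $G_2$ triangle-free bipartite-ish forces... and finally deduce $\delta(G_1[A,B]) \ge 2n/5$. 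The main obstacle I anticipate is precisely this last step: ruling out a vertex $v \in A$ with few colour-$1$ cross-edges, which requires showing that if $d_{G_1}(v,B)$ is small then $d_{G_2}(v,B)$ is large, and then finding inside $N_{G_2}(v,B)$ (a set of $> \delta n$ vertices with no colour-$2$ edge) a colour-$1$ edge $xy$; since $v, x, y$ plus common colour-$1$ neighbours would build a colour-$1$ triangle or the colour-$2$ edges around would build a colour-$2$ triangle — a careful case analysis balancing the two colours at $v$, much in the spirit of the Claim~\ref{cl-R-Vi-mono} argument in the $(K_3,K_4)$ proof. That balancing case analysis, keeping all the $\delta^{1/3}$-type error terms under control, is where the real work lies, while the max-cut stability and the size bounds $|A|,|B| = n/2 \pm \delta^{1/3} n$ are comparatively routine via Theorem~\ref{thm-furedi-stab}.
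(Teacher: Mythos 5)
Your final plan (max-cut of $G$, show the two sides are internally sparse, then a case analysis for the minimum cross-degree) has a genuine gap at its core: you never establish that one of the two colour classes carries only $O(\delta n^2)$ edges, and everything downstream depends on it. You correctly diagnose that Shearer/Ramsey bounds on a single neighbourhood are vacuous in the regime $\delta n\gg\sqrt{n}$, but the fallback you offer --- ``show $e(G[A])+e(G[B])=o(\delta n^2)$ using a rotation argument plus triangle-freeness of the colour classes and $\alpha(G)\le\delta n$'' --- is not an argument: $G$ itself need not be triangle-free (it can contain bichromatic triangles), so neither Mantel nor Theorem~\ref{thm-furedi-stab} applies to $G$ directly, and a max-cut of $G$ has small internal edge count only \emph{after} one knows $G$ is close to bipartite, which is precisely what has to be proved. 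Likewise the sentence ``$G_2[A,B]$ must be small because $G_2$ triangle-free bipartite-ish forces\dots'' is false as stated: a triangle-free bipartite graph can be complete bipartite, so triangle-freeness of $G_2$ alone puts no bound on its cross edges. The mechanism the paper uses, and which is absent from your plan, is the pairwise-intersection observation (Fact~\ref{fact-tri}): for any $x,y$, the set $N_{G_1}(x)\cap N_{G_2}(y)$ spans no edge of either colour, hence has size at most $\alpha(G)\le\delta n$. The paper then picks four vertices of extremal colour-degrees ($x$, then $y\in N_{G_1}(x)$ for colour $1$; $x'$, then $y'$ inside $Z=V\setminus(X\cup Y)$ for colour $2$), and a short optimisation in the parameters $\alpha=|Z|/n$, $\beta=|N_{G_2}(x',Z)|/n$ (Claims~\ref{cl-al-23} and~\ref{cl-G2edge}) forces $e(G_2)\le 22\delta n^2$. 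Only then can Theorem~\ref{thm-furedi-stab} be applied --- to the triangle-free graph $G_1$, not to $G$ --- to produce the partition $A\cup B$ with the stated size bounds; you do mention this step (``after we argue $e(G_1)$ is small''), but that proviso is exactly the missing content.

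You also misplace where the remaining work lies. Once $e(G_2)=O(\delta n^2)$ and $G_1$ is close to complete bipartite, the paper does not need any delicate two-colour case analysis at a bad vertex: it finds one vertex $v$ with $d_{G_1}(v,A)\ge|A|-20\sqrt{\delta}n$, whence Fact~\ref{fact-tri} gives $\Delta(G_2)\le 42\sqrt{\delta}n$, and then a vertex $w$ with $d_{G_1}(w,B)<2n/5$ would have both $N_{G_1}(w,A)$ and $N_{G_1}(w,B)$ of linear size with no $G_1$-edge between them (triangle-freeness of $G_1$), contradicting $e(G_1[A,B])\ge n^2/4-88\delta n^2$. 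So the ``balancing case analysis'' you flag as the main obstacle is in fact routine; the heart of the proof is the global bound on $e(G_2)$, for which your proposal contains no workable route.
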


We will present a proof of Lemma~\ref{thm-2col-tri-stab} without regularity lemma. First, we show how it implies Lemma~\ref{thm-K3K3-weak}.
\begin{proof}[Proof of Lemma~\ref{thm-K3K3-weak}]
Suppose that $e(G)> (\frac{1}{4}+ \frac{\delta}{2})n^2$.
By applying Lemma~\ref{lem: min degree} with $G,1/2,\delta$ playing the roles of $G,d,\epsilon$, respectively, to obtain an $n'$-vertex graph $G'$ with $n'\geq \delta^{1/2} n/2$ satisfying the following, where $\delta':= \delta n/n' \in [\delta, 10^{6})$:
$$\delta(G')\geq n'/2 \enspace \text{and} \enspace e(G') \geq \frac{n'^2}{4} + \frac{\delta n^2}{2}-\frac{n-n'}{4}
\geq \left(\frac{1}{4} + \frac{\delta'}{2}\right)n'^2.$$
Moreover, $\alpha(G')\leq \alpha(G) = \delta n = \delta' n'$.

As $G$ is $(K_3,K_3)$-free, $G'\subseteq G$ is also $(K_3,K_3)$-free and there exists an $(K_3,K_3)$-free $2$ edge-colouring $\phi$ of $G'$.
As $1/n \ll \delta$ and $n'\geq \delta^{1/2} n/2$ and $\delta' < 10^{-6}$, we apply Lemma~\ref{thm-2col-tri-stab} to obtain a partition $A\cup B$ of $V(G')$ with $|A|,|B|=n'/2\pm \delta'^{1/3}n$ and $\delta(G'_1[A,B])\ge 2n'/5$.

We claim that both $G'[A]$ and $G'[B]$ are triangle-free. Suppose that $T\in {A\choose 3}$ induces a triangle in $G'[A]$. Since $G'_1$ is $K_3$-free and $|N_{G'_1}(T,B)|\ge 3\delta(G'_1[A,B])-2|B|>n/6$, we see that no edge in $T$ can be of colour $1$. But then $T$ is monochromatic in colour $2$, contradicting $K_3\not\subseteq G'_2$. Thus, $e(G'[A])\le \Delta(G'[A])|A|/2\le \alpha(G')|A|/2\le \delta' n'|A|/2$. Similarly, $e(G'[B])\le \delta' n'|B|/2$. Hence,
    $$e(G')=e(G'[A,B])+e(G'[A])+e(G'[B])\le n'^2/4+\delta' n'^2/2,$$
 a contradiction. Thus we conclude $e(G)\leq (\frac{1}{4}+ \frac{\delta}{2})n^2$.
\end{proof}

\begin{proof}[Proof of Lemma~\ref{thm-2col-tri-stab}]
	Assume without loss of generality that $e(G_1)\ge e(G_2)$, so $e(G_1)\ge n^2/8$. It is easy to see that the following fact follows from $G$ being $(K_3,K_3)$-free and that $\alpha(G)\le \delta n$.
	\begin{fact}\label{fact-tri}
		For all $x,y\in V(G)$, we have $|N_{G_1}(x)\cap N_{G_2}(y)|\le \alpha(G)\le \delta n$.
	\end{fact}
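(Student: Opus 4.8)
The plan is to prove the equivalent, cleaner statement that the set $S:=N_{G_1}(x)\cap N_{G_2}(y)$ is an independent set of $G$; the displayed bound then follows at once, since $|S|\le\alpha(G)\le\delta n$ by hypothesis.

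First I would record the (trivial but needed) observation that the relevant vertices are distinct. Since $G$ is a simple loopless graph, $x\notin N_{G_1}(x)$ and $y\notin N_{G_2}(y)$, so every $v\in S$ satisfies $v\neq x$ and $v\neq y$; nothing in what follows is affected if $x=y$. Now suppose for contradiction that $S$ is not independent, i.e.\ there is an edge $uv\in E(G)$ with $u,v\in S$, and consider the colour $\phi(uv)\in[2]$. If $\phi(uv)=1$, then because $u,v\in N_{G_1}(x)$ we have $xu,xv\in E(G_1)$, and together with $uv\in E(G_1)$ the triple $\{x,u,v\}$ spans a $K_3$ in $G_1$, contradicting the fact that $\phi$ is $(K_3,K_3)$-free (so $G_1$ is $K_3$-free). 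If instead $\phi(uv)=2$, then since $u,v\in N_{G_2}(y)$ we have $yu,yv\in E(G_2)$, and with $uv\in E(G_2)$ the triple $\{y,u,v\}$ spans a $K_3$ in $G_2$, again contradicting $(K_3,K_3)$-freeness. In either case we reach a contradiction, so $S$ is independent and $|S|\le\alpha(G)\le\delta n$.

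There is no genuine obstacle here: the statement is a one-line consequence of the definitions of $N_{G_1}(x)$, $N_{G_2}(y)$, and of $G$ being $(K_3,K_3)$-free, and the only thing requiring even a moment's attention is checking that $x,y$ are not accidentally among $u,v$, which is guaranteed by $G$ being loopless. Hence the ``proof'' is really just the remark that a common colour-$1$-neighbourhood of $x$ and colour-$2$-neighbourhood of $y$ can contain no edge of either colour.
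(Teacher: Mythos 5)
Your proof is correct and is exactly the intended argument: the paper merely asserts the fact as an easy consequence of $(K_3,K_3)$-freeness, and the one-line observation that an edge of colour $1$ (resp.\ $2$) inside $N_{G_1}(x)\cap N_{G_2}(y)$ would create a triangle in $G_1$ with $x$ (resp.\ in $G_2$ with $y$) is precisely what is meant. Nothing further is needed.
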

	We will sequentially choose four vertices as follows. 
	\begin{itemize}
		\item Take a vertex $x$ with maximum $G_1$-degree, i.e.~$d_{G_1}(x)=\Delta(G_1)$, and set $X:=N_{G_1}(x)$. Then $|X|\ge \frac{2e(G_1)}{n}\ge n/4$. 
		\item Choose a vertex $y\in X$ with maximum $G_1$-degree and set $Y:=N_{G_1}(y)$. Note that as $G_1$ is $K_3$-free, $X\cap Y=\emptyset$. Denote $Z:=V(G)\setminus (X\cup Y)$ and $\alpha:= |Z|/n$. 
		\item Pick now $x'\in Z$ with maximum $G_2$-degree in $Z$. Let $X':=N_{G_2}(x',Z)$ and $\beta := |X'|/n$. By definition, we have $0\le \beta\le \alpha$.
		\item Finally, take $y'\in X'$ with maximum $G_2$-degree in $Z$ and set $Y':=N_{G_2}(y',Z)$. Similarly, as $G_2$ is $K_3$-free, $X'\cap Y'=\emptyset$. So $|Y'|\le |Z\setminus X'|=(\alpha-\beta)n$.
	\end{itemize}
	
	\begin{claim}\label{cl-al-23}
		We have $|X|+|Y|\ge n/3$, consequently, $\alpha\le 2/3$.
	\end{claim}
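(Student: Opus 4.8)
$|X|+|Y|\ge n/3$, consequently $\alpha\le 2/3$.

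The plan is to use the maximality of the degrees of $x$ and $y$ together with Fact~\ref{fact-tri} and the minimum degree condition $\delta(G)\ge n/2$. First I would note that since $y$ was chosen in $X=N_{G_1}(x)$ to have maximum $G_1$-degree among vertices of $X$, every vertex $v\in X$ satisfies $d_{G_1}(v)\le |Y|$; in particular all edges of $G$ incident to $v$ that land outside $Y$ must be coloured $2$. Next I would fix an arbitrary vertex $v\in X$ and estimate its neighbourhood inside $Z=V(G)\setminus(X\cup Y)$: since $\delta(G)\ge n/2$ and $X\cap Y=\emptyset$, we have $d(v,Z)\ge n/2-|X|-|Y|+|\{v\}|\ge$ something forcing, if $|X|+|Y|<n/3$, that $v$ has at least $n/6$ neighbours in $Z$, and all but at most $|Y|$ of these are $G_2$-neighbours of $v$.

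The key step is then a contradiction via Fact~\ref{fact-tri}. I would compare the $G_1$-neighbourhood of $x$ (which is $X$) against the $G_2$-neighbourhood of a suitably chosen vertex. Concretely, pick $v\in X$ as above; then $N_{G_1}(x)\supseteq X\ni v$, but more to the point consider $v$ itself: $x\in N_{G_1}(v)$ and $v$ has many $G_2$-neighbours in $Z$. Applying Fact~\ref{fact-tri} with the pair $(v, \cdot)$ or, cleaner, counting: $v$ has $\ge n/6$ neighbours in $Z$, of which $\le |Y|$ can be $G_1$-neighbours, so $v$ has $\ge n/6-|Y|$ $G_2$-neighbours in $Z$; meanwhile $x$ has $|X|\ge n/4$ $G_1$-neighbours. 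Since $|N_{G_1}(x)\cap N_{G_2}(v)|\le \delta n$ by Fact~\ref{fact-tri}, and $N_{G_2}(v)$ is large while $N_{G_1}(x)=X$ together with $v$'s $Z$-neighbourhood would (if $|X|+|Y|$ is too small) overfill $V(G)$, I get a contradiction with $|V(G)|=n$ once $\delta$ is small. Carrying this out, assuming $|X|+|Y|<n/3$ yields $|X|+|Y|+|N_{G_2}(v,Z)|$ exceeding $n$ after accounting for the $\le \delta n$ overlap, which is impossible. Hence $|X|+|Y|\ge n/3$, and since $|Z|=n-|X|-|Y|$, we get $\alpha=|Z|/n\le 2/3$.

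The main obstacle I anticipate is arranging the counting so that the various error terms ($|Y|$ from the degree-maximality, $\delta n$ from Fact~\ref{fact-tri}) are genuinely absorbed by the slack between $n/3$ and the bound $|X|\ge n/4$ coming from $e(G_1)\ge n^2/8$ — i.e.\ making sure the constants actually close, which is why the hypothesis $\delta<10^{-6}$ is in force. The choice of which vertex to play against $x$ (an element of $X$ with large $G_2$-degree into $Z$) and the exact inclusion-exclusion step is the delicate part; everything else is bookkeeping with the minimum degree condition and disjointness of $X,Y$.
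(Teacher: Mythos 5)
There is a genuine gap: your final counting step cannot produce a contradiction. The three sets you add up, $X$, $Y$ and $N_{G_2}(v,Z)$, are pairwise disjoint by construction ($N_{G_2}(v,Z)\subseteq Z=V(G)\setminus(X\cup Y)$), so their sizes sum to at most $n$ automatically; moreover the lower bounds you actually have, $|X|\ge n/4$ and $|N_{G_2}(v,Z)|\ge n/6-|Y|$, give a total of roughly $5n/12$, nowhere near $n$. Assuming $|X|+|Y|<n/3$ therefore leads to no ``overfilling'' of $V(G)$, and Fact~\ref{fact-tri} contributes nothing at this point, precisely because the large $G_2$-neighbourhood you produce lives inside $Z$, which is already disjoint from $N_{G_1}(x)=X$. (A smaller issue: from $d_{G_1}(v)\le|Y|$ you assert that all edges at $v$ landing outside $Y$ are coloured $2$; the degree bound controls only the \emph{number} of $G_1$-neighbours of $v$, not their location. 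Your later counting only uses the number, so this is not the fatal flaw, but it is false as stated.)

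The missing idea is a \emph{global} edge count in $G_1$ rather than a local neighbourhood comparison. The maximality of $x$ gives $d_{G_1}(u)\le |X|$ for every $u\in V(G)$, and the maximality of $y$ within $X$ gives $d_{G_1}(u)\le |Y|$ for every $u\in X$; summing degrees yields
\begin{equation*}
\frac{n^2}{4}\le 2e(G_1)\le (n-|X|)\,|X|+|X|\,|Y|.
\end{equation*}
One may assume $|X|\le n/3$ (else the claim is trivial), and then a one-variable optimisation of $a\mapsto (1-a)a+ab$ over $a=|X|/n\in[1/4,1/3]$ (the function is increasing there) forces $b=|Y|/n\ge 1/12$, hence $|X|+|Y|\ge n/4+n/12=n/3$. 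Note that this argument uses neither Fact~\ref{fact-tri} nor the minimum degree hypothesis, whereas your route relies on both and still does not close; if you want to keep a neighbourhood-based strategy you would need some mechanism that converts the largeness of $Z$ into a contradiction with $e(G_1)\ge n^2/8$, which is exactly what the degree-sum inequality does.
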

	\begin{proof}
		We may assume that $|X|\le n/3$ otherwise we are done. By the definition of $x$ and $X$, every vertex in $X$ (resp. not in $X$) has $G_1$-degree at most $|Y|$ (resp. $|X|$). Thus,
		\begin{equation}\label{eq-eG1-2}
			\frac{n^2}{4}\le 2e(G_1)\le \sum_{u\notin X}d_{G_1}(u)+\sum_{v\in X}d_{G_1}(v)\le (n-|X|)|X|+|X||Y|,
		\end{equation}
		implying that $f(a):=(1-a)a+ab-\frac{1}{4}\ge 0$, where $1/4\le a:=|X|/n\le 1/3$ and $b:=|Y|/n$. As $f'(a)=1+b-2a>0$, we have $0\le f(a)\le f(1/3)$, implying that $b\ge 1/12$ and then $a+b\ge 1/3$ as desired.
	\end{proof}
	
	Let us show the following bound on the size of $G_1$:
	\begin{equation}\label{eq-eG1}
	e(G_1)\le \frac{(1-\beta)^2n^2}{4}+\delta n^2.
	\end{equation}
	Indeed, the first term above bounds $e(G_1[\overline{X'}])$ as $G_1$ is $K_3$-free; while the second term bounds all $G_1$-edges with at least one endpoints in $X'$. To see this, for each vertex $v\in V(G)$, we have $d_{G_2}(v,X')\subseteq  N_{G_1}(v)\cap N_{G_2}(x')$, thus the desired bound follows from Fact~\ref{fact-tri}. Similarly, we can bound all $G_2$-edges with at least one endpoints in $X\cup Y$ by $2\delta n^2$. Thus, we have that $e(G_2)\le e(G_2[Z])+2\delta n^2$.
		\begin{claim}\label{cl-G2edge}
		We have that $e(G_2)\le 22\delta n^2$.
	\end{claim}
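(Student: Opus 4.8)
The plan is to show that $Z$ is tiny --- of size $O(\delta n)$ --- so that $e(G_2[Z])$ becomes negligible; the claim then follows at once from the bound $e(G_2)\le e(G_2[Z])+2\delta n^2$ recorded just above it. I would use three ingredients. First, $G_2$ is $K_3$-free and $|Z|=\alpha n$, so Mantel's theorem gives $e(G_2[Z])\le \alpha^2 n^2/4$, hence $e(G_2)\le \alpha^2 n^2/4+2\delta n^2$. Second, $\delta(G)\ge n/2$ forces $e(G)\ge n^2/4$, so $e(G_1)=e(G)-e(G_2)\ge \tfrac14(1-\alpha^2)n^2-2\delta n^2$. Third, the inequality $2e(G_1)\le (n-|X|)|X|+|X||Y|$ established in the proof of Claim~\ref{cl-al-23} (that step does not use the assumption $|X|\le n/3$ made there; it is just the maximality of $x$ over $V(G)$ and of $y$ over $X$).

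Writing $a=|X|/n$, $b=|Y|/n$ and using $a+b+\alpha=1$, the third ingredient reads $2e(G_1)/n^2\le a(2-2a-\alpha)$, while the first two give $2e(G_1)/n^2\ge \tfrac12-\tfrac{\alpha^2}{2}-4\delta$. Hence the real number $a=|X|/n$ satisfies
\[
2a^2-(2-\alpha)\,a+\Bigl(\tfrac12-\tfrac{\alpha^2}{2}-4\delta\Bigr)\le 0,
\]
so this quadratic in $a$ must have non-negative discriminant, which after expansion is precisely $5\alpha^2-4\alpha+32\delta\ge 0$. The roots of $5t^2-4t+32\delta$ are $\tfrac{2\pm\sqrt{4-160\delta}}{5}$; the smaller equals $\tfrac{160\delta}{5(2+\sqrt{4-160\delta})}\le 16\delta$, and the larger exceeds $2/3$ since $\delta<10^{-6}$. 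As $5t^2-4t+32\delta$ is negative strictly between its roots and Claim~\ref{cl-al-23} gives $\alpha\le 2/3$, we conclude $\alpha\le 16\delta$.

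Finally, with $|Z|=\alpha n\le 16\delta n$, Mantel gives $e(G_2[Z])\le |Z|^2/4\le 64\delta^2 n^2\le 20\delta n^2$, so $e(G_2)\le e(G_2[Z])+2\delta n^2\le 22\delta n^2$, proving the claim. (In fact one even gets $e(G_2)\le 4\delta n^2$; the slack is harmless.)

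The only delicate point --- the ``hard part'' --- is arranging the constants so that the larger root of $5t^2-4t+32\delta$ lands above $2/3$: this requires the sharp Mantel bound $e(G_2[Z])\le |Z|^2/4$, since with the crude estimate $e(G_2[Z])\le\binom{|Z|}{2}$ the resulting quadratic condition is $9\alpha^2-4\alpha+32\delta\ge 0$, whose larger root is near $0.44$, and then $\alpha\in(0.44,\,2/3]$ would not yield a contradiction. Everything else is bookkeeping: the structural inequality is taken essentially verbatim from the proof of Claim~\ref{cl-al-23}, and $e(G)\ge n^2/4$ is immediate from the minimum-degree hypothesis $\delta(G)\ge n/2$.
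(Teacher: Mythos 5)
Your proof is correct, but it takes a genuinely different route from the paper's. The paper never shows that $Z$ is small; instead it keeps the second pair of greedily chosen vertices $x',y'$ and the parameter $\beta=|X'|/n$, bounds $e(G_2[Z])$ in two ways (by $\alpha\beta n^2/2$ via the maximality of $x'$, and by $(\alpha-\beta)\beta n^2$ via the maximality of $y'$), plays these against the bound $e(G_1)\le\frac{(1-\beta)^2n^2}{4}+\delta n^2$ from~\eqref{eq-eG1}, and splits into the cases $\beta\le\alpha/2$ and $\beta\ge\alpha/2$ to force $\beta\le 40\delta$ or $\alpha\le 20\delta$ respectively. You bypass $x'$, $y'$ and $\beta$ entirely: you bound $e(G_2[Z])$ by Mantel, feed this together with $e(G)\ge n^2/4$ and the degree-sum inequality from the proof of Claim~\ref{cl-al-23} (which, as you correctly note, does not need the assumption $|X|\le n/3$) into a single quadratic in $a=|X|/n$, and read off from the discriminant that $5\alpha^2-4\alpha+32\delta\ge 0$; combined with $\alpha\le 2/3$ from Claim~\ref{cl-al-23} this gives $\alpha\le 16\delta$, and the claim follows with room to spare. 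I checked the computations (the reduction $2e(G_1)/n^2\le a(2-2a-\alpha)$, the discriminant expansion, and the root estimates under $\delta<10^{-6}$) and they are sound. Your argument is shorter, avoids the case analysis, and yields the stronger conclusions $|Z|\le 16\delta n$ and $e(G_2)\le 4\delta n^2$, which is more than the downstream argument needs ($e(G_1)\ge n^2/4-22\delta n^2$); the paper's version, in exchange, extracts its contradiction purely from the greedy degree bounds inside $Z$ and so never needs the sharp Mantel constant, which your discriminant computation genuinely requires, as you point out. Either proof can serve as a drop-in for the claim.
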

	\begin{proof}
		As we have $e(G_2)\le e(G_2[Z])+2\delta n^2$, it suffices to prove $e(G_2[Z])\le 20\delta n^2$. Note first that, by the definition of $x'$, we have that
		\begin{equation}\label{eq-eG2-ab-over2}
			e(G_2[Z])\le \frac{|X'|\cdot |Z|}{2}=\frac{\alpha \beta n^2}{2}.
		\end{equation}
		On the other hand, analogous to~\eqref{eq-eG1-2}, by the definition of $y'$, we have
		\begin{eqnarray}\label{eq-eG2-a-b}
		e(G_2[Z])&\le& \frac{1}{2}\left(\sum_{u\in Z\setminus X'}d_{G_2}(u,Z)+\sum_{v\in X'}d_{G_2}(v,Z)\right)\nonumber\\
		&\le& \frac{1}{2}\left(|Z\setminus X'|\cdot\beta n+|X'|\cdot (\alpha-\beta)n\right)=(\alpha-\beta)\beta n.
		\end{eqnarray}

    We now distinguish two cases.
    
    \case{1}: $\beta\le \alpha/2$. By~\eqref{eq-eG1} and~\eqref{eq-eG2-ab-over2}, we have that
    \begin{equation}\label{eq-beta-small}
    	\frac{n^2}{4}\le e(G)\le \frac{n^2}{4}((1-\beta)^2+2\alpha\beta)+3\delta n^2 \quad \Leftrightarrow \quad (\beta^2-2\beta+2\alpha\beta)+3\delta \ge 0.
    \end{equation}
    Let $f(\beta):=\beta^2-2\beta+2\alpha\beta+3\delta \ge 0$, then $f'(\beta)=2\beta-2+2\alpha\le 3\alpha-2\le 0$ as $\alpha \le 2/3$ due to Claim~\ref{cl-al-23}. So we have $\beta\le 40\delta $, as otherwise
    $$f(\beta)\le f(40\delta )=40\delta (40\delta -2+2\alpha)+3\delta<0,$$
    contradicting~\eqref{eq-beta-small}. Then by~\eqref{eq-eG2-ab-over2}, we have $e(G_2[Z])\le \beta n^2/2\le 20\delta n^2$ as desired.
    
    \case{2}: $\beta\ge \alpha/2$. By~\eqref{eq-eG1} and~\eqref{eq-eG2-a-b}, we have that
    \begin{equation}\label{eq-beta-big}
    \frac{n^2}{4}\le e(G)\le \frac{n^2}{4}((1-\beta)^2+4(\alpha-\beta)\beta)+3\delta n^2 \quad \Leftrightarrow \quad (4\alpha\beta-2\beta-3\beta^2)+3\delta \ge 0.
    \end{equation}
    Let $g(\beta):=4\alpha\beta-2\beta-3\beta^2+3\delta \ge 0$, then $g'(\beta)=4\alpha-2-6\beta\le 4\alpha-2-3\alpha< 0$. So we have $\alpha\le 20\delta $, as otherwise, using that $\alpha\le 2/3$,
    $$g(\beta)\le g(\alpha/2)=\frac{\alpha}{2}\left(4\alpha-2-3\cdot\frac{\alpha}{2}\right)+3\delta \le -\frac{\alpha}{6}+3\delta <0,$$
    contradicting~\eqref{eq-beta-big}. Then by~\eqref{eq-eG2-ab-over2}, we have $e(G_2[Z])\le \alpha n^2/2\le 10\delta n^2$ as desired.
    \end{proof}
  By Claim~\ref{cl-G2edge}, we have $e(G_1)\ge n^2/4-22\delta n^2$. Apply Theorem~\ref{thm-furedi-stab} to $G_1$ with $t=22\delta n^2$ and let $V(G)=V(G_1)=A\cup B$ be an arbitrary max-cut partition of $G_1$. Then we have
  \begin{equation}\label{eq-G1-AB-dense}
  e(G[A])+e(G[B])\le 3t=66\delta n^2 ~ \Rightarrow ~ e(G_1[A,B])\ge e(G_1)-66\delta n^2\ge \frac{n^2}{4}-88\delta n^2,
  \end{equation}
  and $|A|,|B|=n/2\pm 2\sqrt{t}=n/2\pm 10\sqrt{\delta }n$. Note that there exists a vertex $v\in B$ with
  $$d_{G_1}(v,A)\ge \frac{e(G_1[A,B])}{|B|}\ge\frac{n^2/4-88\delta n^2}{n/2+10\sqrt{\delta }n}\ge \frac{n}{2}- 10\sqrt{\delta }n\ge |A|-20\sqrt{\delta }n.$$
  Consequently, for any $u\in V(G)$, we have $d_{G_2}(u,A)\le 21\sqrt{\delta }n$, as otherwise $|N_{G_2}(u)\cap N_{G_1}(v)|\ge\sqrt{\delta }n$, contradicting Fact~\ref{fact-tri}. Similarly, $d_{G_2}(u,B)\leq 21\sqrt{\delta}n$. Thus we have
  $\Delta(G_2)\le 42\sqrt{\delta }n$.
  
  We claim that $A\cup B$ is the desired partition. Suppose $d_{G_1}(w,B)<2n/5$ for some $w\in A$. Then $d_{G_1}(w,A)\ge \delta(G)-\Delta(G_2)-d_{G_1}(w,B)\ge n/20$.
  As $A\cup B$ is a max-cut, we see that
  $d_{G_1}(w,B)\ge d_{G_1}(w,A)-\Delta(G_2)\ge n/30$.
  Since $G_1$ is $K_3$-free, there is no edge of $G_1$ in $[N_{G_1}(w,A),N_{G_1}(w,B)]$, implying that $e(G_1[A,B])\le n^2/4-(n/20)\cdot (n/30)$, contradicting~\eqref{eq-G1-AB-dense} and that $\delta < 10^{-6}$.
\end{proof}

\section{Proof of Theorem~\ref{thm-3q-sm-ind}}\label{sec-3q}
\subsection{Upper bound}
Let $s\ge 2$ and fix a function $g_s(n)$ satisfying \eqref{eq-defn-sublinear}.
Note that a function $g'_s(n)$ satisfying $ g_s(n) = (g'_s(n)/n)^2 n$ is also a function satisfying \eqref{eq-defn-sublinear}.
We choose constants such that 
$0<1/n \ll 1/M' \ll \epsilon \ll \gamma \ll 1$. In particular, $1/g'_s(n)\ll 1/M'$.

Let $G$ be an $n$-vertex graph with $\alpha(G)\le g_s(n)$ with a $2$-edge-colouring $\phi$. 

We apply Lemma~\ref{lem-split-2-col} with $c=(g'_s(n)/n)^2$, to obtain a partition $V_1^*\cup V_2^*$ such that $\alpha(G_i[V_i^*])\leq g'_s(n)$.
 Apply Theorem~\ref{lm:RL} with $G, V^*_1, V^*_2,\phi, \eps, \epsilon^{-1}$ and $M'$ playing the roles of $G,U_1,U_2,\phi, \epsilon,M$ and $M'$ to obtain an $\eps $-regular partition $V_1\cup \dots\cup V_m$ with $\epsilon^{-1}\leq m\leq M'$ which refines the partition $V_1^*\cup V_2^*$. Let $R:=R(\eps ,\gamma,\phi,(V_i)_{i\in[m]})$ be its reduced graph. Let the colouring $\phiind$ be as defined in the proof of Lemma~\ref{thm-K3K4-stab}. 
So if $\phiind(i)=j$ for some $i\in V(R)$ and $j\in[2]$, it means the corresponding cluster $V_i$ in $G$ satisfies $\alpha(G_j[V_i])\le g'_s(n)$. 

By Tur\'an's Theorem, it suffices to show the following.
\begin{enumerate}[label={\rm (R\arabic*)}]
\item\label{R1} $R$ is $K_{R(3,s)}$-free if $\phi$ is $(K_3,K_{2s-1})$-free; 
\item\label{R2} $R$ is $K_{R(3,s)+1}$-free if $\phi$ is $(K_3,K_{2s})$-free.
\end{enumerate}
Indeed, it is easy to see that \ref{R1} implies
$e(G)\leq \frac{1}{2} \left(1-\frac{1}{R(3,s)-1}\right)n^2+3\gamma n^2$ and
\ref{R2} implies $e(G)\leq \frac{1}{2} \left(1-\frac{1}{R(3,s)}\right)n^2+3\gamma n^2$ .

To show \ref{R1} and \ref{R2}, without loss of generality, assume that $[t]\subseteq V(R)$ induces a maximum size clique in $R$. As the case $s=2$ is covered in Theorem~\ref{thm-K3Ks}, we assume that $s\ge 3$.

Suppose that $G$ is $(K_3,K_{2s-1})$-free (resp. $(K_3,K_{2s})$-free). Suppose that $\phiind(i)=2$ for all $i\in[t]$, then by Lemma~\ref{lem-drc-j}, $\phiind|_{[t]}$ is $(K_3,K_s)$-free, and so $t\le R(3,s)-1$ as desired. We may then assume that $\phiind(t)=1$. Then $\phiind(it)=2$ for all $i\in[t-1]$, as otherwise it is easy to see that one can embed $K_3$ in $G_1[V_i\cup V_t]$. Consequently, by Lemma~\ref{lem-drc-j}, we have that $\phiind|_{[t-1]}$ is $(K_3,K_{s-1})$-free (resp. $(K_3,K_{s})$-free). Hence, $t-1\le R(3,s-1)-1\le R(3,s)-2$ (resp. $t-1\le R(3,s)-1$) as desired.

\subsection{Lower bound}
Let $n$ be a sufficiently large number, and
let $H(n)$ be an $n$-vertex $K_3$-free graph with independence number $O(\sqrt{n\log n})$. The celebrated result of Kim \cite{KimRamsey} shows the existence of such graphs.

\subsubsection{Lower bound for $\rt(n,K_3,K_{2s-1},g_s(n))$} Let $t=R(3,s)-1$ and $\phi: {[t]\choose 2}\rightarrow [2]$ be a $(K_3,K_s)$-free colouring. Let $G$ be obtained from adding a copy of $H(n/t)$ to each partite set of $T_t(n)$. The following colouring witnesses $G$ being $(K_3,K_{2s-1})$-free: colour all edges inside each partite set colour $2$ and colour all crossing edges according to $\phi$, i.e.~for any $ij\in{[t]\choose 2}$ and $h\in[2]$, all edges in $[X_i,X_j]$ are of colour $h$ if $\phi(ij)=h$.
\subsubsection{Lower bound for $\rt(n,K_3,K_{2s},g_s(n))$} Let $t=R(3,s)$ and $\phi: {[t-1]\choose 2}\rightarrow [2]$ be a $(K_3,K_s)$-free colouring. Let $G$ be obtained from adding a copy of $H(n/t)$ to each partite set of $T_t(n)$. The following colouring witnesses $G$ being $(K_3,K_{2s})$-free: colour all edges inside $X_t$ colour $1$, and edges inside $X_i$ colour $2$ for all $i\in[t-1]$; colour all crossing edges in $[X_1,\ldots,X_{t-1}]$ according to $\phi$ and colour all $[X_i,X_t]$-edges colour $2$ for all $i\in [t-1]$.  
\section{Concluding remarks}\label{sec-remarks}

\subsection{The value of $\varrho(K_3,K_6,\delta)$}
We conjecture that the following equality holds.
$$\varrho(K_3,K_6,\delta )=\frac{5}{12}+\frac{\delta}{2}+2\delta^2.$$
The lower bound is given by the construction below, see Figure~\ref{fig:estimable}.
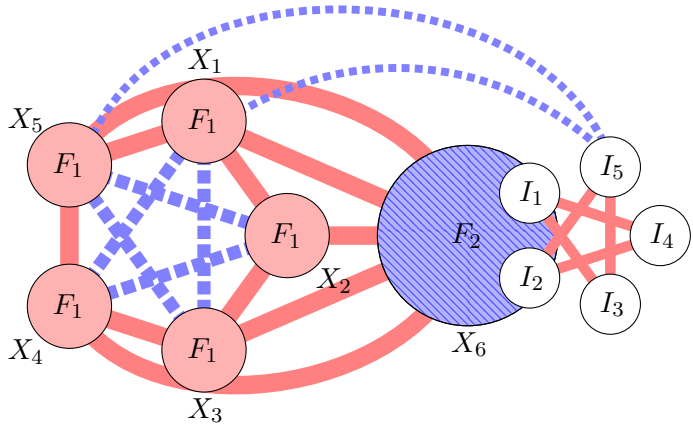
\begin{figure}
	\centering
	\begin{tikzpicture}[scale = 0.8]
	\tikzset{be/.style = {line width=5pt, color=blue!50!white, dotted}}
	\tikzset{re/.style = {line width=7pt, color=red!50!white}}
	\tikzset{b2e/.style = {line width=3pt, color=blue!50!white, dotted}}
	\tikzset{r2e/.style = {line width=4pt, color=red!50!white}}

	\draw[re] (1*2-2,0) to (0.309*2-2,0.951*2);
	\draw[re] (0.309*2-2,0.951*2) to (-0.8090*2-2,0.5878*2);
	\draw[re] (-0.8090*2-2,0.5878*2) to (-0.8090*2-2,-0.5878*2);
	\draw[re] (-0.8090*2-2,-0.5878*2) to (0.309*2-2,-0.951*2);
	\draw[re] (0.309*2-2,-0.951*2) to (1*2-2,0);
	
	\draw[be] (1*2-2,0) to  (-0.8090*2-2,0.5878*2);
	\draw[be] (0.309*2-2,0.951*2)  to (-0.8090*2-2,-0.5878*2);
	\draw[be] (-0.8090*2-2,0.5878*2) to (0.309*2-2,-0.951*2);
	\draw[be] (-0.8090*2-2,-0.5878*2) to (1*2-2,0);
	\draw[be] (0.309*2-2,-0.951*2) to (0.309*2-2,0.951*2);
	
	\draw[re] (3,0) to (0.309*2-2,0.951*2);
	\draw[re] (3,0) to[bend right=70] (-0.8090*2-2,0.5878*2);
	\draw[re] (3,0) to[bend left=70] (-0.8090*2-2,-0.5878*2);
	\draw[re] (3,0) to (0.309*2-2,-0.951*2);
	\draw[re] (3,0) to (1*2-2,0);

	\draw[b2e] (0.309*1.2+0.15+5,0.951*1.2) to[bend right=40] (0.309*2+0.25-2+0.1,0.951*2);
	\draw[b2e] (0.309*1.2+0.05+5,0.951*1.2) to[bend right=70] (-0.8090*2+0.25-2+0.1,0.5878*2+0.4);
			
	\draw[fill=red!30] (1*2-2,0) ellipse (0.7cm and 0.7cm);
	\node [rectangle, fill=none, draw=none, text width = 1cm] at (1*2+0.25-2+0.1,0) {$F_1$};
	\node [rectangle, fill=none, draw=none, text width = 1cm] at (1*2+0.25-2+0.1+0.75,0-0.75) {$X_2$};
	
	\draw[fill=red!30] (0.309*2-2,0.951*2) ellipse (0.7cm and 0.7cm);
	\node [rectangle, fill=none, draw=none, text width = 1cm] at (0.309*2+0.25-2+0.1,0.951*2) {$F_1$};
	\node [rectangle, fill=none, draw=none, text width = 1cm] at (0.309*2+0.25-2+0.1,0.951*2+1) {$X_1$};
	
	\draw[fill=red!30] (-0.8090*2-2,0.5878*2) ellipse (0.7cm and 0.7cm);
	\node [rectangle, fill=none, draw=none, text width = 1cm] at (-0.8090*2+0.25-2+0.1,0.5878*2) {$F_1$};
	\node [rectangle, fill=none, draw=none, text width = 1cm] at (-0.8090*2+0.25-2+0.1-0.75,0.5878*2+0.75) {$X_5$};
	
	\draw[fill=red!30] (-0.8090*2-2,-0.5878*2) ellipse (0.7cm and 0.7cm);
	\node [rectangle, fill=none, draw=none, text width = 1cm] at ((-0.8090*2+0.25-2+0.1,-0.5878*2) {$F_1$};
	\node [rectangle, fill=none, draw=none, text width = 1cm] at ((-0.8090*2+0.25-2+0.1-0.75,-0.5878*2-0.75) {$X_4$};
	
	\draw[fill=red!30] (0.309*2-2,-0.951*2) ellipse (0.7cm and 0.7cm);
	\node [rectangle, fill=none, draw=none, text width = 1cm] at (0.309*2+0.25-2+0.1,-0.951*2) {$F_1$};
	\node [rectangle, fill=none, draw=none, text width = 1cm] at (0.309*2+0.25-2+0.1,-0.951*2-1) {$X_3$};
	
	\draw[fill=blue!30] (3,0) ellipse (1.5cm and 1.5cm);
	\draw[pattern=north west lines, pattern color=blue!70] (3,0) ellipse (1.5cm and 1.5cm);
	\draw[r2e] (1*1.2+5,0) to  (-0.8090*1.2+5,0.5878*1.2);
	\draw[r2e] (0.309*1.2+5,0.951*1.2) to  (-0.8090*1.2+5,-0.5878*1.2);
	\draw[r2e]  (-0.8090*1.2+5,0.5878*1.2) to  (0.309*1.2+5,-0.951*1.2);
	\draw[r2e]  (-0.8090*1.2+5,-0.5878*1.2) to  (1*1.2+5,0);
	\draw[r2e] (0.309*1.2+5,-0.951*1.2) to (0.309*1.2+5,0.951*1.2);

	\node [rectangle, fill=none, draw=none, text width = 1cm] at (3+0.25+0.1,0) {$F_2$};
		\node [rectangle, fill=none, draw=none, text width = 1cm] at (3+0.25+0.1,0-1.8) {$X_6$};
	
	\draw[fill=white] (1*1.2+5,0) ellipse (0.5cm and 0.5cm);
	\node [rectangle, fill=none, draw=none, text width = 1cm] at (1*1.2+0.35+5+0.1,0) {$I_4$};
	
	\draw[fill=white] (0.309*1.2+5,0.951*1.2) ellipse (0.5cm and 0.5cm);
	\node [rectangle, fill=none, draw=none, text width = 1cm] at (0.309*1.2+0.35+5+0.1,0.951*1.2) {$I_5$};
	
	\draw[fill=white] (-0.8090*1.2+5,0.5878*1.2) ellipse (0.5cm and 0.5cm);
	\node [rectangle, fill=none, draw=none, text width = 1cm] at (-0.8090*1.2+0.35+5+0.1,0.5878*1.2) {$I_1$};
	
	\draw[fill=white] (-0.8090*1.2+5,-0.5878*1.2) ellipse (0.5cm and 0.5cm);
	\node [rectangle, fill=none, draw=none, text width = 1cm] at ((-0.8090*1.2+0.35+5+0.1,-0.5878*1.2) {$I_2$};

	\draw[fill=white] (0.309*1.2+5,-0.951*1.2) ellipse (0.5cm and 0.5cm);
	\node [rectangle, fill=none, draw=none, text width = 1cm] at (0.309*1.2+0.35+5+0.1,-0.951*1.2) {$I_3$};

	\end{tikzpicture}
	\caption{A graph with no blue (dotted) $K_3$ and no red $K_6$. 
	All edges incident to $\bigcup_{i\in [5]}I_i$ and $\bigcup_{i\in [5]}X_i$ are omitted in the picture except blue edges between $I_5$ and $X_5\cup X_1$.
	}\label{fig:estimable}
\end{figure}

\begin{itemize}
	\item Let $F_1:=F(\frac{n}{6}, d_1)$ and $F_2:=F(\frac{n}{6}-\frac{3\delta n}{2}, d_2)$ where $d_i\in [\delta n- o(n),\delta n]$. So $e(F_1)=\frac{\delta n^2}{12}\pm o(n^2)$ and $e(F_2)=\frac{\delta n^2}{12}-\frac{3\delta^2n^2}{4}\pm o(n^2)$.
	
	\item Let $I=\{v_1,v_2,\ldots, v_{d_2}\}$ be an independent set of size $d_2$ in $F_2$. Let $I=I_1\cup I_2$ be an equipartition of $I$. Let $F$ be an $n/6$-vertex graph obtained from $F_2$ by 
	\begin{itemize}
		\item first adding 3 clone sets of $I_1$, say $I_i$ with $i\in\{3,4,5\}$; 
		\item adding all $[I_i,I_{i+2}]$-edges for each $i\in[5]$ (addition modulo $5$); and 
		\item adding an additional set of $\frac{3}{2}(\delta n-d_2)$ isolated vertices.
	\end{itemize}
	Note that $F$ is \emph{not} triangle-free, and
	$$e(F)=e(F_2)+\frac{3d_2}{2}\cdot d_2+5\left(\frac{d_2}{2}\right)^2=\frac{\delta n^2}{12}+2\delta^2n^2\pm o(n^2).$$
	\item Finally, let $G$ be the graph obtained from $T_6(n)$, by putting a copy of $F$ in $X_6$ and a copy of $F_1$ in $X_i$ for each $i\in[5]$. 
\end{itemize}
It is clear that $G$ has the desired size and easy to check that the following 2-edge-colouring $\phi$ of $G$ is $(K_3,K_6)$-free: 
\begin{itemize}
	\item let $\phi(X_i,X_{i+2})=1$ for each $i\in[5]$ (addition modulo $5$); 
	\item let $\phi(I_i,X_i\cup X_{i+1})=1$  for each $i\in[5]$ (addition modulo $5$); 
	\item let $\phi(e)=1$, for all $e\in E(G[X_6]\setminus G[\cup_{i\in[5]}I_i])$; 
	\item all other edges are of colour $2$. 
\end{itemize}

\subsection{The value of $\varrho(K_3,K_{2s})$}
	Recall that for triangle versus odd cliques, Erd\H os, Hajnal, Simonovits, S\'os and Szemer\'edi~\cite{EHSSSz} conjectured that $\varrho(K_3,K_{2s-1})$ is achieved by the $(R(3,s)-1)$-partite Tur\'an graph, i.e.~$\varrho(K_3,K_{2s-1})=\frac{1}{2}\left(1-\frac{1}{R(3,s)-1}\right)$. Base on Theorem~\ref{thm-3q-sm-ind}, we put forward the following conjecture for triangle versus even cliques.
	\begin{conjecture}
		For all $s\ge 2$, $\varrho(K_3,K_{2s})=\frac{1}{2}\left(1-\frac{1}{R(3,s)}\right)$.
	\end{conjecture}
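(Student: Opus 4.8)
The lower bound $\varrho(K_3,K_{2s})\ge\frac12\bigl(1-\frac{1}{R(3,s)}\bigr)$ is already available: the graph built for the lower bound of Theorem~\ref{thm-3q-sm-ind} --- take $T_{R(3,s)}(n)$, place a copy of Kim's $K_3$-free graph~\cite{KimRamsey} of independence number $O(\sqrt{n\log n})$ inside each part, and colour the edges as described there --- is $(K_3,K_{2s})$-free, has independence number $o(n)$ (hence at most $\delta n$ for every fixed $\delta>0$ once $n$ is large), and $\frac12\bigl(1-\frac{1}{R(3,s)}\bigr)n^2+o(n^2)$ edges. So the content of the conjecture is the matching upper bound $\rt(n,K_3,K_{2s},\delta n)\le\bigl(\frac12(1-\frac{1}{R(3,s)})+o(1)\bigr)n^2$ for every fixed small $\delta>0$; this is strictly stronger than Theorem~\ref{thm-3q-sm-ind}, which reaches the same density only under the much more restrictive hypothesis $\alpha(G)\le g_s(n)$.

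For the upper bound I would begin from the regularity setup of Section~\ref{sec-3q}: split $V(G)$ by Lemma~\ref{lem-split-2-col}, regularise by Lemma~\ref{lm:RL}, pass to the reduced graph $R$ with the induced colouring $\phiind$, and observe via~\eqref{eq: size original reduced} that it suffices to bound $e(R)$ by $e(T_{R(3,s)}(m))+o(m^2)$. Two of the structural implications used in Section~\ref{sec-3q} carry over verbatim: a colour-$1$ triangle in $R$ forces $K_3\subseteq G_1$ by the counting lemma (Lemma~\ref{lem-embedding}), regardless of $\alpha(G)$; and a colour-$1$ edge incident to a cluster of type $1$ in $\phiind$ also forces $K_3\subseteq G_1$, using only that such a cluster has independence number smaller than its size in colour $1$. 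The one implication that does not survive is that a colour-$2$ copy of $K_s$ all of whose clusters are of type $2$ forces $K_{2s}\subseteq G_2$: this is exactly where Lemma~\ref{lem-drc-j} is used, and its dependent-random-choice argument locates inside each cluster only a set of size roughly $|V_i|^{2/3}$ on which common neighbourhoods stay large.

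The hard part is precisely this. When $\alpha(G)$ is merely $\delta n$ with $\delta$ a fixed constant, these sets fall below $\alpha(G)$ for large $n$, so they may be independent and the implication fails: a type-$2$ cluster can support a \emph{dense} colour-$2$ graph of sublinear (but not $g_s$-small) independence number without containing the complete blow-up, so it behaves like a smaller instance of the single-colour Ramsey-Tur\'an problem for $K_{2s}$. The argument can therefore no longer exclude cliques larger than $R(3,s)$ in $R$, and the plan has to be to show --- in the spirit of Claim~\ref{cl-light-tri} in the proof of Theorem~\ref{thm-RTK8} --- that the edges of any such oversized clique cannot all carry density close to $1$: one would tag each vertex of $R$ with its $\phiind$-type, determine for each type-pattern the largest total edge-weight a clique realising it can carry without creating a $K_3$ in $G_1$ or a $K_{2s}$ in $G_2$, and then optimise the resulting weighted clique problem over reduced graphs, aiming to match $T_{R(3,s)}$; alternatively one could try the stability route that succeeds for $s=2$ in this paper (Lemmas~\ref{thm-K3K4-stab} and~\ref{thm-K3K4-upperbound}), first showing a near-extremal $G$ is close to $T_{R(3,s)}(n)$ and then bootstrapping.

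Either route needs an ingredient that is not yet available: an Erd\H os-Hajnal-Simonovits-S\'os-Szemer\'edi style structural description (cf.~\cite{EHSSSz}) of a dense $K_{2s}$-free graph of sublinear independence number living inside a single cluster, together with control of how such a cluster can interact with the colours on the rest of $R$. I expect this nested Ramsey-Tur\'an analysis to be the real obstacle --- it is why even the fixed-$\delta$ refinement $\varrho(K_3,K_6,\delta)=\frac{5}{12}+\frac{\delta}{2}+2\delta^2$ conjectured just above is open, and why only the case $s\le2$, settled via $\varrho(K_3,K_4)$ in Theorem~\ref{thm-K3Ks}, is currently known.
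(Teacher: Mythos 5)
The statement you were given is not proved in the paper: it appears in the concluding remarks precisely as a conjecture, put forward on the strength of Theorem~\ref{thm-3q-sm-ind}, so there is no proof of the upper bound to compare yours against. Your assessment of the situation is accurate. The lower bound argument you give is correct: the construction from the lower bound of Theorem~\ref{thm-3q-sm-ind} (Kim's graph in each part of $T_{R(3,s)}(n)$, inside-edges and $[X_i,X_t]$-edges coloured as in Section~\ref{sec-3q}) has independence number $O(\sqrt{n\log n})=o(n)$, is $(K_3,K_{2s})$-free, and has the claimed density, so $\varrho(K_3,K_{2s})\ge\frac12\bigl(1-\frac{1}{R(3,s)}\bigr)$ for all $s\ge 2$. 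You also correctly isolate why the paper's upper-bound argument does not extend from $\alpha(G)\le g_s(n)$ to $\alpha(G)\le\delta n$: the only step that uses the stronger hypothesis is Lemma~\ref{lem-drc-j}, whose dependent-random-choice embedding (compare Lemma~\ref{lem-drc-v}, producing sets of size about $|V_i|^{2/3}$) needs those sets to exceed $\alpha(G)$, which fails once $\alpha(G)$ is linear in $n$; and indeed the implication ``colour-$2$ $K_s$ in $R$ with type-$2$ clusters forces $K_{2s}\subseteq G_2$'' must fail at this scale, since Bollob\'as--Erd\H{o}s-type graphs~\cite{Bollobas-E} show that a single cluster can be dense, $K_4$-free and of sublinear independence number. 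So the genuine content of the conjecture is exactly the nested Ramsey--Tur\'an analysis you describe (a weighted-clique optimisation over $\phiind$-typed cliques in $R$ in the spirit of~\cite{EHSSSz}, or a coloured stability argument as in Lemmas~\ref{thm-K3K4-stab} and~\ref{thm-K3K4-upperbound}), and neither you nor the paper supplies it.

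To be clear about status: your proposal is not a proof of the statement --- the upper bound is missing --- but that is not a defect relative to the paper, which leaves the same gap open; only $s=2$ is known, via $\varrho(K_3,K_4)=\frac13$ from~\cite{EHSSSz} (consistent with Theorem~\ref{thm-K3Ks} as $\delta\to 0$). One small caveat: your closing remark ties the difficulty to the open status of the fixed-$\delta$ refinement $\varrho(K_3,K_6,\delta)=\frac{5}{12}+\frac{\delta}{2}+2\delta^2$; that refinement is strictly stronger than the $s=3$ case of the conjecture (it encodes the second-order term in $\delta$ as well), so its openness is suggestive but not the same obstacle. Otherwise your account of what is known, what the construction gives, and where the argument breaks is faithful to the paper.
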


\subsection{Ramsey-Tur\'an number with more than 2 colours}
We remark that the multicolour Ramsey-Tur\'an number for \emph{triangles} is related to a version of Ramsey number studied by Liu, Pikhurko and Sharifzadeh~\cite{LPSh}. They introduced $r^*(K_{a_1},\ldots, K_{a_k})$ as the largest integer $N$ such that there exists a colouring $\phi: {[N]\choose \le 2}\rightarrow [k]$ with the following property: 

\begin{itemize}
	\item[$(\ast)$] for each $i\in[k]$, there is no edge-monochromatic $K_{a_i}$ in colour $i$, and there is no edge incident to a vertex with the same colour, i.e.~$\phi(ij)\neq \phi(i)$ for any $j\neq i$.
\end{itemize}
Note that when an $n$-vertex graph $G$ is $(K_3,\ldots,K_3)$-free with $\alpha(G)=o(n)$, then the colouring $\phiind$ on its reduced graph $R$ satisfies~$(\ast)$, hence
$$\varrho(K_3,\ldots,K_3)=\frac{1}{2}\left(1-\frac{1}{r^*(K_3,\ldots,K_3)}\right).$$
In particular, Theorems~1.8 and~1.9 in~\cite{LPSh} imply $\varrho(K_3,K_3,K_3)=\frac{2}{5}$ and $\varrho(K_3,K_3,K_3,K_3)=\frac{15}{32}$.

In general Ramsey-Tur\'an numbers for larger cliques are \emph{not} determined by $r^*$, for example $\varrho(K_3,K_5)=\frac{2}{5}\neq \frac{1}{2}\left(1-\frac{1}{r^*(K_3,K_5)}\right)=\frac{3}{8}$ and $\varrho(K_4,K_4)=\frac{11}{28}\neq \frac{1}{2}\left(1-\frac{1}{r^*(K_4,K_4)}\right)=\frac{1}{3}$.

\medskip

{\footnotesize \obeylines \parindent=0pt
	
	\begin{tabular}{lllll}
		Jaehoon Kim	&\ & Younjin Kim	&\ &	Hong Liu        \\
		School of Mathematics &\ & Institute of Mathematical Sciences &\ & Mathematics Institute		  		 	 \\
		University of Birmingham &\ & Ewha Womans University &\ & University of Warwick 	  			 	 \\
		Birmingham &\ & Seoul &\ & Coventry                             			 \\
		UK &\ & South Korea &\ & UK						      \\
	\end{tabular}
}

\begin{flushleft}
	{\it{E-mail addresses}:
		\tt{j.kim.3@bham.ac.uk, younjinkim@ewha.ac.kr, h.liu.9@warwick.ac.uk.}}
\end{flushleft}

\appendix
\section{}

\subsection{When \ref{P2} occurs in $(K_3,K_4)$ stability}
We bounded $\varrho(K_3,K_4,\delta)$ from above in Section~\ref{sec-K3K4-upperbound} assuming that \ref{P1} occurs in the stability. Here we sketch a proof that if $G'$ satisfies \ref{P2}, then we have $e(G')\leq n'^2/3+\delta' n'^2/2$.
	
Recall that  we have a partition $X_1\cup X_2\cup X_3$ of $V(G')$ satisfying \ref{A1}--\ref{A6}. Assume that \ref{P2} occurs. It suffices to show that for each $i\in [3]$, the graph $G'[X_i]$ is triangle-free. Indeed, this would implies that for each $i\in[3]$, we have $\Delta(G'[X_i])\le \alpha(G')\le \delta' n'$, and then we have
\begin{equation}\label{eq-K3-free-cal}
	e(G')\le e(G'[X_1,X_2,X_3])+\sum_{i\in[3]}e(G'[X_i])\le \frac{n'^2}{3}+\frac{\delta' n'^2}{2},
\end{equation}	
as desired.

By symmetry, suppose to the contrary that $\{u,v,w\}\in {X_1\choose 3}$ induces a copy of $K_3$. As $G'_1$ is $K_3$-free, we may assume that $uv$ is of colour $2$. As $\alpha(G'_1[X_i])\le\gamma^{1/4}n'$ for each $i\in\{2,3\}$ and $G'_1$ is $K_3$-free, neither $u$ nor $v$ can have more than $\gamma^{1/5}n'$ neighbours in $X_2$ or $X_3$ in the graph $G'_1$. Then by \ref{A1}, \ref{A5} and \ref{P2}, for each $i\in \{2,3\}$, we can easily find, a vertex  $x_i\in N_{G'_2}(uv,X_i)$ such that $x_2x_3\in E(G'_2)$. Then $u,v,x_2,x_3$ induces a $K_4$ in $G'_2$, a contradiction.

\subsection{Upper bound for $\varrho(K_3,K_5,\delta)$}
Let $G$ be an $n$-vertex $(K_3,K_5)$-free graph with $\alpha(G)\le \delta n$. 
We first prove the following coloured stability using the approach for Lemma~\ref{thm-K3K4-stab}.
\begin{lemma}\label{thm-K3K5-stab}
Suppose $0< 1/n\ll \delta\ll\gamma\ll\eta\ll 1$. Let $G$ be an $n$-vertex $(K_3,K_5)$-free graph with $\alpha(G)\le \delta n$ and $\delta(G)\ge 4n/5$. Then for any $(K_3,K_5)$-free $\phi: E(G)\rightarrow [2]$, there exists a partition $V(G)=X_1\cup \ldots\cup X_5$ such that for each $i\in[5]$,
\begin{enumerate}[label={\rm (A$'$\arabic*)}]
	\item \label{A'1} $|X_i|=n/5\pm \eta n$;
	\item \label{A'2} $\alpha(G_2[X_i])\le \eta n$;
	\item \label{A'3} for every $v\in X_i$, we have  $d_{G_1}(v,X_{i+2})\ge |X_{i+2}|-\eta n$;
	\item \label{A'4} for every $v\in X_i$, we have$d_{G_2}(v,X_{i+1})\ge |X_{i+1}|-\eta n$.
\end{enumerate}
\end{lemma}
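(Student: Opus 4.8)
The plan is to follow closely the architecture of the proof of Lemma~\ref{thm-K3K4-stab}, with $T_3$, $K_4$ and the dichotomy \ref{P1}/\ref{P2} replaced by $T_5$, $K_6$ and the rigid ``pentagon'' colour pattern coming from the extremal construction for $\varrho(K_3,K_5,\delta)$ (colour $1$ on the pentagram edges $X_iX_{i+2}$, colour $2$ inside the parts and on the pentagon edges $X_iX_{i+1}$). First I would set up regularity: choosing $0<1/n\ll\delta\ll 1/M'\ll\eps\ll\gamma\ll\eta\ll1$, apply Lemma~\ref{lem-split-2-col} with $c=\delta^{1/2}$ to split $V(G)=V_1^*\cup V_2^*$ with $\alpha(G_i[V_i^*])\le\delta^{1/2}n$, then apply Lemma~\ref{lm:RL} refining this partition to obtain an $\eps$-regular partition into $m$ clusters and the reduced graph $R=R(\eps,\gamma)$, and define the colouring $\phiind$ on $V(R)\cup E(R)$ exactly as in the proof of Lemma~\ref{thm-K3K4-stab}. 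The weak stability for $(K_3,K_5)$-free graphs from~\cite{EHSSSz} gives $|G\triangle T_5(n)|=o(n^2)$, hence $o(n^6)$ copies of $K_6$ in $G$; by the Counting Lemma~\ref{lem-embedding} this forces $R$ to be $K_6$-free, whence Theorem~\ref{thm-furedi-stab} gives $|R\triangle T_5(m)|\le\gamma^{1/3}m^2$ and $\delta(R)\ge(4/5-3\gamma)m$. As in the $K_4$ proof, every edge of $R$ receives a $\phiind$-colour, a generalised clique $Z_t$ of colour $i$ in $R$ yields $K_t\subseteq G_i$ (so $R$ is $(Z_3,Z_5)$-free, using the embedding of~\cite{BLSh}), and deleting from $R$ all edges of weight $\le 1/2+\gamma$ leaves a graph $R'$ with $e(R)-e(R')\le\gamma^{1/4}m^2$ (from $\delta(G)\ge4n/5$, so $e(G)\ge 2n^2/5$), hence $R'$ is still close to $T_5(m)$. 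Finally take a max-cut $5$-partition $U_1\cup\dots\cup U_5$ of $R$; then $|U_i|=m/5\pm\gamma^{1/7}m$, $\sum_i e(R[U_i])\le\gamma^{1/3}m^2$ and $\mincr(R[U_1,\dots,U_5])$ is linear in $m$.

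Next I would determine the colour pattern of $R$, in two steps playing the roles of Claims~\ref{cl-R-Vi-mono} and~\ref{cl-R-cp}. \emph{Step 1 (almost every cluster has $\phiind$-colour $2$).} The key point is a ``heavy-edge trick'': if $V_k$ is a colour-$1$ cluster, then $\alpha(G_1[V_k])\le\delta^{1/2}n<|V_k|$, so $G_1[V_k]$ contains an edge on two vertices typical for all incident regular pairs, and if $kl\in E(R')$ has $\phiind(kl)=1$ the density $>1/2+\gamma$ in $G_1$ across $(V_k,V_l)$ lets us extend this edge to a triangle in $G_1$, a contradiction; hence \emph{every $R'$-edge at a colour-$1$ cluster is colour $2$}. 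Since $R'$ is close to $T_5(m)$, a colour-$1$ cluster is therefore joined in colour $2$ (heavily) to almost all clusters of the other four parts, and combining several such clusters with colour-$2$ clusters and the structure of $R'$ produces a $Z_5$ of colour $2$ unless the colour-$1$ clusters are few; I expect one needs a case analysis on how $D$ (the set of colour-$1$ clusters) distributes among $U_1,\dots,U_5$ to conclude $|D\cap U_i|=o(m)$ for every $i$. \emph{Step 2 (the quotient is $C_5+C_5$).} Restricting to colour-$2$ clusters (all but $o(m)$ of $V(R)$), the colour-$1$ edges of $R$ are triangle-free (a colour-$1$ triangle is a $Z_3$ of colour $1$), and the colour-$2$ edges are triangle-free too: a colour-$2$ triangle $pqr$ on colour-$2$ clusters gives a $Z_5$ of colour $2$ on $(pqr,pq)$ (the edge $pq$ is heavy, so $(\{p,q,r\},\{p,q\})$ is a generalised clique), hence $K_5\subseteq G_2$. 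Since the reduced graph on colour-$2$ clusters is $o(m^2)$-close to complete $5$-partite, each transversal $K_5$ is $2$-coloured with both classes triangle-free, so it is the unique such colouring $C_5+C_5$; a consistency argument across transversals then shows that after relabelling $U_1,\dots,U_5$ cyclically, all but $o(m^2)$ of the pairs $U_iU_{i+1}$ are colour $2$ and all but $o(m^2)$ of the pairs $U_iU_{i+2}$ are colour $1$ (indices mod $5$).

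Finally I would transfer back to $G$. Let $X_i'$ be the union of the clusters of $U_i$, and clean up as in Claim~\ref{cl-refine-Vi}: repeatedly move to the appropriate part any vertex of crossing degree $<n/10$, which terminates after $o(n)$ moves because $\sum_i e(G[X_i'])=o(n^2)$; call the result $X_1\cup\dots\cup X_5$. This gives~\ref{A'1} and a linear $\mincr(G[X_1,\dots,X_5])$, and~\ref{A'2} follows since $G_2[X_i]$ is assembled from colour-$2$ clusters (each with $\alpha(G_2)\le\delta^{1/2}n$) together with $o(m)$ exceptional clusters of size $\le\eps n$ and $o(n)$ moved vertices. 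For~\ref{A'3} and~\ref{A'4} I would run degree-pushing arguments in the spirit of the proof of Lemma~\ref{thm-K3K4-stab}: if some $v\in X_i$ missed $\ge\eta n$ vertices of $X_{i+2}$ in $G_1$, then using $\delta(G)\ge4n/5$, the large minimum crossing degrees, the near-absence of $G_1$-edges inside $X_{i+1}$ and $X_{i+2}$, and $\alpha(G)\le\delta n$, one locates either a triangle in $G_1$ or a $K_5$ in $G_2$; and symmetrically if some $v\in X_i$ missed $\ge\eta n$ vertices of $X_{i+1}$ in $G_2$, one again builds a forbidden $K_5$ in $G_2$.

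The main obstacle is Step~1, namely showing that only $o(m)$ clusters per part have $\phiind$-colour $1$. The heavy-edge trick instantly shows a colour-$1$ cluster sees the rest of $R'$ in colour $2$, but a constant fraction of colour-$1$ clusters concentrated in one or two parts is not immediately contradictory --- it does not violate colour-$1$ triangle-freeness, and with colour-$1$ clusters it only produces $K_4\subseteq G_2$ rather than $K_5\subseteq G_2$ --- so ruling this out requires a case analysis on the distribution of $D$ among $U_1,\dots,U_5$, analogous to (and somewhat longer than) Claim~\ref{cl-R-Vi-mono}, exploiting that $R'$ is almost complete $5$-partite together with all the $(Z_3,Z_5)$ constraints. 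A secondary, more routine, difficulty is making the ``quotient colour'' of each pair $U_aU_b$ well defined up to $o(m^2)$ error, and then extracting the sharp vertex-wise bounds~\ref{A'3}--\ref{A'4} from the resulting pentagon structure via the degree-pushing.
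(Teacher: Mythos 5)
Your set-up (regularity, $\phiind$, $(Z_3,Z_5)$-freeness of $R$, closeness of $R$ to $T_5(m)$, max-cut partition, cleaning as in Claim~\ref{cl-refine-Vi}) matches the paper's appendix proof, but there is a genuine gap at exactly the step you flag as the main obstacle: excluding colour-$1$ clusters. You only predict that ``a case analysis on the distribution of $D$'' will give $|D\cap U_i|=o(m)$ and give no argument, yet everything downstream (the $C_5+C_5$ quotient, \ref{A'2}, and hence \ref{A'3}--\ref{A'4}) depends on it. The paper proves the much stronger fact that \emph{no} cluster has $\phiind$-colour $1$, by a short self-contained case analysis: given a blue cluster $u$, pick (as in Claim~\ref{cl-R-Vi-mono}) a vertex $x_2$ in another part whose relevant incident edges lie in $R'$, complete $\{u,x_2\}$ to a transversal copy of $K_5$ in $R$, and check that every colouring of this $K_5$ produces either a $Z_3$ of colour $1$ or a $Z_5$ of colour $2$ (the heaviness of the edges at $x_2$ is used in the sub-case where an edge at $x_2$ is red, via the generalised clique $(ux_2x_3,x_2x_3)$). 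Moreover, your stated reason that a concentration of blue clusters is ``not immediately contradictory'' --- that they only produce $K_4\subseteq G_2$ --- is mistaken: a blue cluster may serve as a vertex of $X$ only, so a red $K_4$ of $R$ containing the blue cluster together with one \emph{red} vertex of it playing the role of $Y$ is already a $Z_5$ of colour $2$ (no weight condition is needed when $|Y|=1$), hence $K_5\subseteq G_2$. This is precisely the mechanism the paper's case analysis exploits, and without some such argument your Step~1, and therefore the proof, does not go through.

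Two smaller points. In Step~2 you call a colour-$2$ triangle $pqr$ a $Z_5$ on $(pqr,pq)$; this needs the edge $pq$ to have weight greater than $1/2+\gamma$, which is not automatic for an arbitrary edge of $R$, so you must first restrict to $R'$ (discarding the at most $\gamma^{1/4}m^2$ light edges), as the paper implicitly does. Also note that a colour-$1$ edge incident to a colour-$1$ cluster is already a $Z_3$ of colour $1$ via $(kl,k)$, so your ``heavy-edge trick'' is not needed for that observation and applies to all edges of $R$, not just those of $R'$. The remaining parts of your plan (transfer to $G$, \ref{A'1}, \ref{A'2}, and the degree arguments for \ref{A'3}--\ref{A'4}) are consistent with the paper's sketch.
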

\begin{proof}[Sketch of proof]
	Let $G,\phi,R,R', \phiind$ be obtained analogous to that in the proof of Lemma~\ref{thm-K3K4-stab}. Then $R$ is close to $T_5(|R|)$ and $(Z_3,Z_5)$-free. Let BLUE and RED be colour $1$ and colour $2$ respectively. Let $U_1\cup \ldots\cup U_5$ be a max-cut 5-partition of $V(R)$. 
	As $R$ is close to $T_5(|R|)$, similar to~\eqref{eq-Ui-property} and~\eqref{eq-Ui-mincr}, we get that $\sum_{i\in[5]}e(R[U_i])=o(m^2)$, $|U_i|=m/5\pm o(m)$, and $\mincr(R[U_1,\ldots,U_5])\ge (\delta(R)-3\max|U_i|)/2\ge m/11$.

	We now show that each $U_i$ should be monochromatic in $\phiind$ with RED colour. Suppose this is not the case. As in Claim~\ref{cl-R-Vi-mono}, let $u\in X_1$ be a BLUE vertex and $x_2\in X_2$ be a typical vertex such that for each $i\in \{3,4,5\}$ we have $|N_{R}(u,X_i)\cap N_{R'}(x_2,X_i)|\geq m/20$.
As $R$ is close to $T_5(|R|)$, we can find three vertices $x_3\in X_3, x_4\in X_4, x_5\in X_5$ such that $\{u,x_2,x_3,x_4,x_5\}$ forms a copy of $K_5$ in $R$. Moreover, all edges incident to $x_2$ is also in $R'$.

If $w_i$ is BLUE for some $i\in [5]\setminus \{1\}$, then any edges in the $K_5$ incident to $u$ or $w_i$ are all RED, as otherwise we obtain a BLUE generalised clique $Z_3$. Let $\{j,k,\ell\}= [5]\setminus \{1,i\}$.
If all three remaining edges $x_{j}x_{k},x_{j}x_{\ell}, x_{k}x_{\ell}$ are RED, then we obtain a RED $K_5$ (which is also a generalised clique $Z_5$), a contradiction.
If all three edges $x_{j}x_{k},x_{j}x_{\ell}, x_{k}x_{\ell}$ are BLUE, then we obtain a BLUE $K_3$ (which is also a copy of $Z_3$), a contradiction.
Thus we may assume that $x_{j}x_{k}$ is RED while $x_{j}x_{\ell}$ is BLUE. 
Then $x_{j}$ is RED, as otherwise we obtain a BLUE $Z_3$. However, in such a case $(uwx_{j}x_k,x_{j})$ is a RED $Z_5$, a contradiction.

So, we may assume that $x_2,x_3,x_4,x_5$ are all RED, and any edges in the $K_5$ incident to $u$ are all RED. If any edge incident to $x_2$ but not $u$, say $x_2x_3$ is RED, then as $x_2x_3\in R'$, we obtain a $(ux_2x_3, x_2x_3)$, a RED generalised clique $Z_5$, a contradiction.
Thus $x_2x_3, x_2x_4,x_2x_5$ are all BLUE. Then all three edges $x_3x_4, x_3x_5, x_4x_5$ must be all RED, otherwise we obtain a BLUE $K_3$ together with $x_2$. However, then $(ux_3x4_x5, x_3)$ is a RED generalised clique $Z_5$, a contradiction. Thus, all vertices of $R$ are RED.

	Since each $U_i$ is monochromatic in RED. By the stability for 2-coloured Tur\'an problem, $R[U_1,\ldots,U_5]$ is close to the blow-up $(K_3,K_3)$-free 2-edge-colouring of $K_5$. This almost gives all we need, except that the corresponding partition could have low-degree vertices. Let $X_1'\cup\ldots \cup X_5'$ be the corresponding partition of $V(G)$, i.e.~$X_i':=\cup_{k\in U_i}V_k$.
	By using argument similar to the proof of Claim~\ref{cl-refine-Vi}, 
	we can obtain sets $X_1,\dots, X_5$, such that for each $k\neq j\in [5]$, $d(v,X_k)\ge\delta(G)-3\max|X_i|-n/20\ge n/10$. As $R$ is close to $T_5(|R|)$, $X_i$ and $X_i'$ only differ on $o(n)$ vertices, proving \ref{A'1}. Also as each $U_i$ is monochromatic in RED, we see that the RED graph induced on each $X_i'$, hence also on $X_i$, has $o(n)$ independence number, proving \ref{A'2}. Both \ref{A'3} and \ref{A'4} follow from the fact that that the colour pattern of $R[U_1,\ldots,U_5]$ is close to the blow-up $(K_3,K_3)$-free 2-edge-colouring of $K_5$. 	
\end{proof}
We now show how Lemma~\ref{thm-K3K5-stab} implies the desired upper bound on $\varrho(K_3,K_5,\delta)$.

Assume that $e(G)> (\frac{2}{5}+ \frac{\delta}{2})n^2$.
By applying Lemma~\ref{lem: min degree} with $G,1/2,\delta$ playing the roles of $G,d,\epsilon$, respectively, to obtain an $n'$-vertex graph $G'$ with $n'\geq \delta^{1/2} n/2$ satisfying 
$\delta(G')\geq  (\frac{2}{5} + \frac{\delta'}{2})n'^2$
and $\alpha(G')\leq \alpha(G) = \delta n = \delta' n'$, where $\delta':= \delta n/n' \in [\delta, \delta^{1/3})$.

As $G$ is $(K_3,K_5)$-free, $G'\subseteq G$ is also $(K_3,K_5)$-free and there exists an $(K_3,K_5)$-free $2$ edge-colouring $\phi$ of $G'$.
As $1/n \ll \delta$ and $n'\geq \delta^{1/2} n/2$ and $\delta' < 10^{-6}$, we apply Lemma~\ref{thm-K3K5-stab} to obtain a partition  $X_1\cup \ldots\cup X_5$ of $V(G')$ satisfying \ref{A'1}--\ref{A'4} . We first observe that each $G'[X_i]$ is monochromatic in colour $2$. Indeed, if there is an edge $uv\in E(G'_1[X_i])$, then by \ref{A'3}, we get $K_3\subseteq G_1$, a contradiction.

Suppose that $T\in{X_i\choose 3}$ induces a triangle in $X_i$. By the observation above, $T$ is monochromatic in colour $2$. By \ref{A'1}, \ref{A'2} and \ref{A'4}, we have $|N_{G'_2}(T,X_{i+1})|\ge n/6>\alpha(G'_2[X_{i+1}])$, implying a copy of $K_5$ in $G'_2$, a contradiction.

Hence for each $i\in [5]$, the graph $G'[X_i]$ is triangle-free, which would imply that $e(G')\le \frac{2n'^2}{5}+\frac{\delta n'^2}{2}$ using the calculation similar to~\eqref{eq-K3-free-cal}, a contradiction.
Therefore, we conclude $e(G)\leq \frac{2n^2}{5} + \frac{\delta n^2}{2}$.

\subsection{A missing case in the weak stability in~\cite{EHSSSz}}
We will use the notation in~\cite{EHSSSz}. In their proof, they consider a coloured weighted clique $K_t(w)$.  They assume that there is a BLUE vertex $v$ in the weighted clique $K_t(w)$ (BLUE=colour $1$; while RED=colour $2$). However, it could be that all vertices are RED. This missing case can be easily proven as follows. When all vertices are RED, $K_t(w)$ cannot contain an edge-monochromatic $K_3$, implying that $t\le 5$. Also as all vertices are RED, no RED edge can have weight larger than $1/2$. Together with the fact that BLUE edges induces a triangle-free graph, a simple optimisation shows that maximum is achieved when $t=3$.


\end{document}